\documentclass[11 pt]{amsart}
\usepackage{amsmath, amsthm,amssymb,bbm,enumerate,mathrsfs,mathtools}
\usepackage{a4wide}
\usepackage{tikz,xcolor,verbatim}
\usetikzlibrary{calc,shapes}
\usepackage{hyperref}
\usepackage{makecell}
\usepackage[numbers,sort&compress]{natbib}

\usepackage[yyyymmdd,hhmmss]{datetime}
 \usepackage{background}
 \SetBgContents{ver. \currenttime \qquad \today \qquad
       This is a preprint version of a submitted manuscript. }
 \SetBgScale{1}
 \SetBgAngle{0}
 \SetBgOpacity{1}
 \SetBgPosition{current page.south west}
 \SetBgHshift{8cm}
 \SetBgVshift{1cm}

%
%
\tikzset{
  bigblue/.style={circle, draw=blue!80,fill=blue!40,thick, inner sep=1.5pt, minimum size=5mm},
  bigred/.style={circle, draw=red!80,fill=red!40,thick, inner sep=1.5pt, minimum size=5mm},
  bigblack/.style={circle, draw=black!100,fill=black!40,thick, inner sep=1.5pt, minimum size=5mm},
  bluevertex/.style={circle, draw=blue!100,fill=blue!100,thick, inner sep=0pt, minimum size=2mm},
  redvertex/.style={circle, draw=red!100,fill=red!100,thick, inner sep=0pt, minimum size=2mm},
  blackvertex/.style={circle, draw=black!100,fill=black!100,thick, inner sep=0pt, minimum size=2mm},  
  whitevertex/.style={circle, draw=black!100,fill=white!100,thick, inner sep=0pt, minimum size=2mm},  
  smallblack/.style={circle, draw=black!100,fill=black!100,thick, inner sep=0pt, minimum size=1mm},
    microvert/.style={circle, draw=black!100,fill=black!100, inner sep=0pt, minimum size=0.0001mm},
  smallwhite/.style={circle, draw=black!100,fill=white!100,thick, inner sep=0pt, minimum size=1mm} 
}

\makeatletter
\pgfdeclareshape{myNode}{
  \inheritsavedanchors[from=rectangle] 
  \inheritanchorborder[from=rectangle]
  \inheritanchor[from=rectangle]{center}
  \inheritanchor[from=rectangle]{north}
  \inheritanchor[from=rectangle]{south}
  \inheritanchor[from=rectangle]{west}
  \inheritanchor[from=rectangle]{east}
  \backgroundpath{
    \southwest \pgf@xa=\pgf@x \pgf@ya=\pgf@y
    \northeast \pgf@xb=\pgf@x \pgf@yb=\pgf@y
    \pgfsetcornersarced{\pgfpoint{5pt}{5pt}}
    \pgfpathmoveto{\pgfpoint{\pgf@xa}{\pgf@ya}}
    \pgfpathlineto{\pgfpoint{\pgf@xa}{\pgf@yb}}
    \pgfpathlineto{\pgfpoint{\pgf@xb}{\pgf@yb}}
    \pgfsetcornersarced{\pgfpoint{5pt}{5pt}}
    \pgfpathlineto{\pgfpoint{\pgf@xb}{\pgf@ya}}
    \pgfpathclose
 }
}
\makeatother


\title{Graph Homomorphism Reconfiguration and Frozen $\boldsymbol{H}$-Colourings}
\author[Brewster]{Richard C. Brewster}
\address[Richard C. Brewster]{Department of Mathematics and Statistics, Thompson Rivers University, Kamloops, BC, Canada} 
\email{rbrewster@tru.ca}

\author[Lee]{Jae-Baek Lee}
\address[Jae-Baek Lee and Mark Siggers]{College of Natural Sciences, Kyungpook National University, Daegu 702-701, South Korea}
\email{dlwoqor0923@gmail.com, mhsiggers@knu.ac.kr}

\author[Moore]{Benjamin Moore}
\address[Benjamin Moore]{Department of Combinatorics and Optimization, University of Waterloo, Waterloo, ON, Canada}  
\email{brmoore@uwaterloo.ca}

\author[Noel]{Jonathan A. Noel}
\address[Jonathan A. Noel]{Department of Computer Science and DIMAP, University of Warwick, Coventry CV4 7AL.}  
\email{j.noel@warwick.ac.uk}
              
\thanks{
The First author is supported by the Natural Sciences and Engineering Research Council of Canada Grant RGPIN-2014-04760.
The second author is supported by the Kyungpook University BK21 Grant.
Part of this work was done while the third author was a student at Simon Fraser University and supported by NSERC. 
Part of this work was done while the fourth author was a DPhil student at the University of Oxford and part of this work was done while the fourth author was a postdoctoral researcher at ETH Z\"{u}rich.
The fifth author is supported by Korean NRF Basic Science Research Program (2015-R1D1A1A01057653) funded by the Korean government (MEST)
                  and the Kyungpook National University Research Fund. The second and fifth authors would like to thank the first author and Thompson Rivers Univesity for their support during the writing of this work.
}

\author[Siggers]{Mark Siggers}

\date{}


\newtheorem{thm}[equation]{Theorem}
\newtheorem{lem}[equation]{Lemma}
\newtheorem{prop}[equation]{Proposition}

\newtheorem{cor}[equation]{Corollary}
\newtheorem{claim}[equation]{Claim}
\newtheorem{ques}[equation]{Question}

\theoremstyle{definition}
\newtheorem{defn}[equation]{Definition}
\newtheorem{obs}[equation]{Observation}

\newtheorem*{ack}{Acknowledgements}

\newtheorem{rem}[equation]{Remark}

\newtheoremstyle{case}{}{}{\normalfont}{}{\itshape}{\normalfont:}{ }{}

\theoremstyle{case}

\newtheorem{step}{Step}

\numberwithin{equation}{section}

%
%

\newcommand\Hfrz[1]{\textsc{Frozen $#1$-Colouring}}

\newcommand\CSP[1]{\textsc{CSP($#1$)}}
\newcommand\Hcol[1]{\textsc{$#1$-Colouring}}
\newcommand\Hrec[1]{\textsc{$#1$-Recolouring}}

\newcommand{\Hom}{\operatorname{Hom}}

\newcommand{\bHom}{\operatorname{\mathbf{Hom}}}

\newcommand{\kgraph}{$k$-relation}
\newcommand{\kgraphs}{$k$-relations}


\begin{document}

\begin{abstract}
For a fixed graph $H$, the \emph{reconfiguration problem} for $H$-colourings (i.e. homomorphisms to $H$) asks: given a graph $G$ and two $H$-colourings $\varphi$ and $\psi$ of $G$, does there exist a sequence $f_0,\dots,f_m$ of $H$-colourings such that $f_0=\varphi$, $f_m=\psi$ and $f_i(u)f_{i+1}(v)\in E(H)$ for every $0\leq i<m$ and $uv\in E(G)$? If the graph $G$ is loop-free, then this is the equivalent to asking whether it possible to transform $\varphi$ into $\psi$ by changing the colour of one vertex at a time such that all intermediate mappings are $H$-colourings. In the affirmative, we say that $\varphi$ \emph{reconfigures} to $\psi$. Currently, the complexity of deciding whether an $H$-colouring $\varphi$ reconfigures to an $H$-colouring $\psi$ is only known when $H$ is a clique, a circular clique, a $C_4$-free graph, or in a few other cases which are easily derived from these. We show that this problem is PSPACE-complete when $H$ is an odd wheel. 

An important notion in the study of reconfiguration problems for $H$-colourings is that of a \emph{frozen $H$-colouring}; i.e. an $H$-colouring $\varphi$ such that $\varphi$ does not reconfigure to any $H$-colouring $\psi$ such that $\psi\neq \varphi$. We obtain an explicit dichotomy theorem for the problem of deciding whether a given graph $G$ admits a frozen $H$-colouring. The hardness proof involves a reduction from a CSP problem which is shown to be NP-complete by establishing the non-existence of a certain type of polymorphism. 
\end{abstract}

\maketitle

\section{Introduction}

All graphs in this paper are finite and undirected (although, some of the problems that we consider are also interesting for directed graphs; see~\cite{directed}). We allow loops but no multiple edges; a vertex is said to be \emph{reflexive} if it has a loop.  A \emph{homomorphism} from a graph $G$ to a graph $H$, sometimes called an \emph{$H$-colouring} of $G$, is a mapping $\varphi:V(G)\to V(H)$ such that $\varphi(u)\varphi(v)\in E(H)$ whenever $uv\in E(G)$. We write $G\to H$ to indicate that $G$ admits a homomorphism to $H$ and $\varphi:G\to H$ to refer to a particular homomorphism $\varphi$. The vertices of $H$ are often referred to as \emph{colours} and, for $v\in V(G)$, the image $f(v)$ of $v$ is often referred to as the \emph{colour} of $v$. Given graphs $G$ and $H$, let  $\Hom(G,H)$ denote the set of all homomorphisms from $G$ to $H$. 

The most fundamental decision problem for graph homomorphisms is the \Hcol{H} problem, which asks whether a given graph $G$ admits a homomorphism to $H$. This problem is easily solvable in polynomial time if $H$ is bipartite or contains a loop. Hell and Ne\v{s}et\v{r}il~\cite{Hcol} famously proved that, in all other cases, the problem is NP-complete. More recently, new proofs of this result using techniques from universal algebra~\cite{Bulatov} and Fourier analysis~\cite{KunSzegedy} have been discovered, as well as a shorter purely combinatorial proof~\cite{Siggers}.

Our focus in this paper is on two decision problems for graph homomorphisms arising from an area of research known as ``combinatorial reconfiguration.'' A standard reconfiguration problem asks, given two solutions to a fixed combinatorial problem, whether it is possible to transform one of the solutions into the other by applying a sequence of allowed modifications. Results in combinatorial reconfiguration can provide interesting insights into the structure of the solution space of a combinatorial problem and, sometimes, ideas from combinatorial reconfiguration can even be used to establish the existence of a solution with special properties. For example, Wrochna~\cite{WrochnaMult} used ideas from his own paper on combinatorial reconfiguration~\cite{Wrochna} to prove that every graph $H$ without cycles of length $4$ is ``multiplicative'' in the sense that $G\times F\to H$ implies that $G\to H$ or $F\to H$ (see Definition~\ref{catProdDef} for a definition of the $\times$ product for graphs). For more background on combinatorial reconfiguration, one should consult the broad surveys of van den Heuvel~\cite{changeSurvey} and Nishimura~\cite{Nishimura}. 

Given $\varphi,\psi\in \Hom(G,H)$, a \emph{reconfiguration sequence} taking $\varphi$ to $\psi$ is a sequence $f_0,\dots,f_m\in \Hom(G,H)$ such that $f_0=\varphi$, $f_m=\psi$ and, for $0\leq i< m$, we have $f_i(u)f_{i+1}(v)\in E(H)$ for all $uv\in E(G)$. If there exists a reconfiguration sequence taking $\varphi$ to $\psi$, then we say that $\varphi$ \emph{reconfigures} to $\psi$. Given a fixed graph $H$, the following decision problem is known as \Hrec{H}:
\begin{itemize}
\item[] \textbf{Instance:} A graph $G$ and $\varphi,\psi\in \Hom(G,H)$.
\item[] \textbf{Question:} Does $\varphi$ reconfigure to $\psi$?
\end{itemize}
One may observe that, if $\varphi$ reconfigures to $\psi$, then there exists a reconfiguration sequence  taking $\varphi$ to $\psi$ in which any two consecutive elements of the sequence differ in only one vertex. From this, it is not hard to see that, if $G$ is loop-free and $\varphi,\psi\in\Hom(G,H)$, then $\varphi$ reconfigures to $\psi$ if and only if $\varphi$ can be transformed into $\psi$ by changing the colour of one vertex at a time while maintaining that the mapping is an $H$-colouring. For general $G$, one requires the extra condition that, if $u\in V(G)$ is reflexive, then the colour of $u$ must always be changed to a neighbour of its current colour. 

Problems in combinatorial reconfiguration can usually be viewed as questions about the structure of a so called ``reconfiguration graph:''  the vertices of this graph are solutions and the edges correspond to a single application of a reconfiguration step.  In our case, the reconfiguration graph, denoted $\bHom(G,H)$, has $\Hom(G,H)$ as its vertex set where two homomorphisms $f$ and $g$ are adjacent whenever $f(u)g(v)\in E(H)$ for all $uv\in E(G)$.  Thus \Hrec{H} asks whether $\varphi,\psi\in \Hom(G,H)$ are in the same component of $\bHom(G,H)$~\cite{BrewsterNoel,3colReconfig}.

It is trivial that \Hrec{H} is solvable in polynomial time if $H=K_1$ or $H=K_2$, and it is also not hard to see that \Hrec{H} is contained in PSPACE for general fixed $H$. Cereceda, van den Heuvel and Johnson~\cite{3colReconfig} proved that, surprisingly, \Hrec{K_3} is solvable in polynomial time despite the fact that the \Hcol{K_3} problem is NP-complete (i.e. it is NP-complete to decide if a graph admits a proper $3$-colouring). On the other hand, Bonsma and Cereceda~\cite{Bonsma} proved that \Hrec{K_k} is PSPACE-complete for all $k\geq4$. They achieved this by showing that the decision problem \textsc{Sliding Tokens}, which was shown to be PSPACE-complete in~\cite{Sliding}, reduces to \Hrec{K_k}. Thus, the reconfiguration problem for homomorphims to cliques admits the following dichotomy theorem. 

\begin{thm}[Cereceda, van den Heuvel and Johnson~\cite{3colReconfig}; Bonsma and Cereceda~\cite{Bonsma}]
\label{colouringDichotomy}
The \Hrec{K_k} problem is solvable in polynomial time if $k\leq 3$ and is PSPACE-complete if $k\geq4$. 
\end{thm}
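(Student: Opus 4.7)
The plan is to handle the polynomial-time side and the PSPACE-complete side separately. The cases $k\in\{1,2\}$ are essentially trivial: for $K_1$ the graph $G$ must be edgeless and admits a unique colouring; for $K_2$ the graph $G$ must be bipartite, and one checks directly that a single-vertex recolouring step in $\bHom(G,K_2)$ can only toggle an isolated vertex, so connectivity in $\bHom(G,K_2)$ is decidable in linear time. For $k=3$ I would follow Cereceda, van den Heuvel and Johnson and associate to each $3$-colouring $\varphi$ of $G$ a family of integer invariants indexed by cycles. Fix an arbitrary orientation of each edge of $G$ and, for an oriented edge $uv$ under $\varphi$, assign the signed colour change $+1$ if $\varphi(u)\varphi(v)\in\{12,23,31\}$ (a positive step in the cyclic order on $V(K_3)$) and $-1$ otherwise. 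For each cycle $C$ define $w_\varphi(C)$ to be the sum of the signed changes around $C$; this sum is divisible by $3$ because the net colour change must vanish modulo $3$. A short calculation shows that any single-vertex flip $\varphi(v)\mapsto\varphi(v)'$ forces all neighbours of $v$ to share the third colour, and the two signed contributions on the two edges of any cycle through $v$ shift in cancelling directions, so $w_\varphi$ is invariant on each component of $\bHom(G,K_3)$. A converse topological/combinatorial argument shows that equality of $w_\varphi$ and $w_\psi$ on a cycle-space basis suffices for reconfigurability, and this check is polynomial time.

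For $k\geq 4$ I would prove PSPACE-completeness by reducing from \textsc{Sliding Tokens}, which asks whether one independent set $I_1$ of a graph $F$ can be transformed into another independent set $I_2$ by a sequence of single-token slides along edges of $F$ that maintains independence throughout; this is PSPACE-complete by~\cite{Sliding}. From an instance $(F,I_1,I_2)$ I would construct in polynomial time a graph $G$ and $K_k$-colourings $\varphi,\psi$ such that $I_1$ transforms to $I_2$ iff $\varphi$ reconfigures to $\psi$. The encoding assigns to each $v\in V(F)$ a token gadget $T_v\subseteq G$ with a distinguished vertex $v^\bullet$, designed so that in every $K_k$-colouring of $T_v$ the vertex $v^\bullet$ carries either colour $k$ (encoding ``$v\in I$'') or a colour in $\{1,\dots,k-1\}$ (encoding ``$v\notin I$''), and so that the gadget admits a bounded-length reconfiguration sequence flipping $v^\bullet$ between the two modes whenever external conditions allow. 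Edges of $F$ are realised by further identifications or joins so that adjacent $u^\bullet,v^\bullet$ cannot simultaneously receive colour $k$ and so that a token slide $v\to u$ along $uv\in E(F)$ corresponds to a polynomial-length reconfiguration sequence in $\bHom(G,K_k)$. Containment of \Hrec{K_k} in PSPACE is the standard ``guess the sequence step by step'' argument.

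The main technical obstacle is the joint design of the token and edge gadgets so that the correspondence between components of $\bHom(G,K_k)$ and reachability classes in \textsc{Sliding Tokens} is faithful. One must rule out parasitic reconfiguration sequences in which a gadget oscillates between modes with no corresponding token slide, an internal cascade of recolourings simulates a move that $F$ forbids, or the global ``token count'' drifts. I would address this by attaching to each gadget a Boolean logical state, proving by induction along any reconfiguration sequence that each elementary step either fixes all logical states or executes a legal slide, and conversely, by exploiting the extra flexibility of $K_k$ for $k\geq 4$ (in contrast to $K_3$, whose winding invariant obstructs such encodings) to build enough manoeuvring room into the gadgets that every legal sequence of slides is realised by some polynomial-length reconfiguration sequence in $\bHom(G,K_k)$.
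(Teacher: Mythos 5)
The paper does not prove Theorem~\ref{colouringDichotomy}; it is stated as a black-box citation of Cereceda, van den Heuvel and Johnson for $k\leq 3$ and of Bonsma and Cereceda for $k\geq 4$, and those proofs are not reproduced in the paper. So the comparison here is against the cited literature rather than against an in-paper argument.

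Your $k\geq4$ outline names the right source of hardness (a reduction from \textsc{Sliding Tokens}, as in Bonsma--Cereceda), but it only describes the gadgets you would need without constructing them, so it is a plan rather than a proof. Your $k=3$ outline contains a genuine gap: equality of the winding numbers $w_\varphi$ and $w_\psi$ on a cycle basis is necessary but \emph{not} sufficient for $\varphi$ and $\psi$ to lie in the same component of $\bHom(G,K_3)$. A minimal counterexample is $G=K_3$ itself: the identity colouring and its cyclic shift assign the same winding ($\pm3$) to the unique triangle, yet all six proper $3$-colourings of $K_3$ are frozen, hence isolated in $\bHom(K_3,K_3)$, so these two colourings lie in different components. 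The Cereceda--van den Heuvel--Johnson algorithm first computes the set of fixed (frozen) vertices and checks that $\varphi$ and $\psi$ agree on them; only in combination with the winding-number condition does this yield a correct characterisation and a polynomial-time algorithm. The present paper even flags exactly this point, noting that determining frozen vertices is ``an important first step in each of the polynomial-time algorithms'' for $H$-recolouring, and it is precisely this step that your sketched converse omits.
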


There has been some recent work on extending Theorem~\ref{colouringDichotomy} to larger classes of graphs. Wrochna~\cite{Wrochna} extended the polynomial side of Theorem~\ref{colouringDichotomy} to the following remarkably general result: \emph{If $H$ does not contain a $4$-cycle, then \Hrec{H} is solvable in polynomial time}. Brewster, McGuinness, Moore and Noel~\cite{circularReconfig} generalised both the polynomial and PSPACE-complete sides of Theorem~\ref{colouringDichotomy} to circular $(p,q)$-cliques (see~\cite{circularReconfig} for a definition): the problem is polynomial if $p/q < 4$ and PSPACE-complete otherwise. Intriguingly, the class of graphs for which \Hrec{H} is known to be solvable in polynomial time coincides exactly with the class of graphs which are known to be multiplicative. The fact that circular $(p,q)$-cliques with $p/q<4$ are multiplicative was first proved by Tardif~\cite{Tardif2005}; a new proof, using ideas from reconfiguration, was recently given by Wrochna~\cite{WrochnaMult}. More recently, Brewster, Lee and Siggers~\cite{directed} have investigated the complexity of \Hrec{H} for digraphs in which every vertex is reflexive (where digraph homomorphisms are the same as graph homomorphisms, except that they are required to preserve the directions of the arcs). 

Our first theorem extends the list of ``hardness'' results for \Hrec{H} to a new family of graphs. The \emph{wheel} of length $m$, denoted $W_m$, is the graph obtained from a cycle of length $m$  by adding a vertex adjacent to all vertices of the cycle. We prove the following.

\begin{thm}
\label{wheelThm}
For $k\geq1$, \Hrec{W_{2k+1}} is PSPACE-complete.
\end{thm}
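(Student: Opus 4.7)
The case $k=1$ is immediate: $W_3 = K_4$ and $\Hrec{K_4}$ is PSPACE-complete by Theorem~\ref{colouringDichotomy}. Membership of $\Hrec{W_{2k+1}}$ in PSPACE for all $k \geq 1$ has already been noted in the introduction. Thus the task is to prove PSPACE-hardness for every $k \geq 2$, and the plan is to do this via a polynomial-time reduction from $\Hrec{K_4}$. Given an instance $(G, \varphi, \psi)$ of $\Hrec{K_4}$, the goal is to build, in polynomial time, an auxiliary graph $G^*$ together with $W_{2k+1}$-colourings $\varphi^*, \psi^*$ of $G^*$ such that $\varphi$ reconfigures to $\psi$ in $\bHom(G, K_4)$ if and only if $\varphi^*$ reconfigures to $\psi^*$ in $\bHom(G^*, W_{2k+1})$.

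The main conceptual difficulty is that $K_4$ is not a subgraph of $W_{2k+1}$ when $k \geq 2$, since the clique number of $W_{2k+1}$ is only $3$. Hence the four colours of $K_4$ cannot be realised as four mutually adjacent vertices of $W_{2k+1}$, and one must use gadgets. My plan is to fix a set $S$ of four vertices of $W_{2k+1}$ — the hub $h$ together with three carefully chosen rim vertices — to play the role of the four colours of $K_4$. Attached to each vertex $u \in V(G)$, I add a \emph{pinning} gadget whose purpose is to force the image of $u$ to lie in $S$ in every intermediate $W_{2k+1}$-colouring of $G^*$, and to make the single-vertex recolourings of $u$ that are allowed inside $G^*$ mirror exactly those allowed for $u$ in the $K_4$-colouring. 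Along each edge $uv \in E(G)$, I add an \emph{edge} gadget that realises the full $K_4$-adjacency between $u$ and $v$; in particular, the "missing" $K_4$-edges — those corresponding to pairs of non-adjacent rim vertices in $S$ — are enforced indirectly via paths whose lengths wrap around the rim and exploit its odd girth $2k+1$.

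Proving correctness of the reduction is where the bulk of the work lies and is the main expected obstacle. The forward direction is relatively straightforward: a $K_4$-reconfiguration step at a vertex $u$ of $G$ can be simulated by a short sequence of $W_{2k+1}$-steps in which the interiors of the affected gadgets are first unlocked, $u$ is recoloured in $S$, and the gadgets are then restored to a canonical configuration. The reverse direction is more subtle: from any $W_{2k+1}$-reconfiguration of $(G^*, \varphi^*, \psi^*)$, I must extract a sequence of milestone colourings in which every interface vertex of $G$ lies in $S$, and such that consecutive milestones differ at only one interface vertex in a manner compatible with a legal $K_4$-move. This requires showing that the pinning gadgets are rigid enough to prevent interface vertices from escaping $S$, yet flexible enough for the forward lift to succeed. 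The key ingredient should be the homomorphism rigidity of the odd cycle — every homomorphism $C_{2k+1} \to C_{2k+1}$ is a dihedral automorphism — so any gadget containing a full copy of the rim admits only finitely many $W_{2k+1}$-colourings, which ought to pin the interface colours tightly. Combining the two directions yields the desired reduction and establishes PSPACE-completeness of $\Hrec{W_{2k+1}}$ for all $k \geq 1$.
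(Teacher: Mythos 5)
Your plan diverges from the paper's, and the divergence is where the gap lies. The paper reduces from $\Hrec{K_{2k+1}}$, not $\Hrec{K_4}$: it encodes the $2k+1$ colours of $K_{2k+1}$ as the $2k+1$ rim vertices of $W_{2k+1}$, simulating each $K_{2k+1}$-edge by a path $P_{2k}$ (each edge of $G$ subdivided $2k-2$ times), exactly because $P_{2k}\to C_{2k+1}$ can send its endpoints to any two distinct rim vertices but not to a common one. Pinning a vertex to the rim is then achieved simply by joining it to the $\alpha$-vertex of a frozen reference copy of $W_{2k+1}$. You instead propose to encode $K_4$ via a four-element set $S$ consisting of the hub and three rim vertices. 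This runs into a concrete obstruction: if a pinning gadget forces $u$ into the rim by $\alpha$-adjacency, then $\alpha\notin S$, so this cannot realise your $S$; and there is no apparent way for a local gadget to restrict $u$ to three specified rim vertices out of $2k+1$, since a path of length $d\geq 2$ from a frozen rim vertex propagates a whole ``ball'' of possible colours (always including $\alpha$), never an arbitrary three-element subset. You provide no concrete gadget, and this step is not a formality.

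The deeper issue is the reverse direction. Even with pinning in hand, one must rule out reconfiguration sequences that pass through an intermediate state in which two $G$-adjacent vertices receive the same colour. The static indicator argument for $\Hcol{K_{2k+1}}\leq_p \Hcol{C_{2k+1}}$ does \emph{not} survive this: once the hub $\alpha$ is available, paths $P_{2k}$ can map both endpoints to the same rim colour by routing through $\alpha$. The paper's key innovation is the freezing gadget $F_k(x,y)$ together with Lemma~\ref{alphaFrozen}: any $W_{2k+1}$-colouring of $F_k(x,y)$ sending both $x$ and $y$ to $\alpha$ is frozen, hence an isolated vertex of $\bHom(F_k(x,y),W_{2k+1})$, and so cannot occur in a nontrivial reconfiguration sequence. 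This, together with the flexibility shown in Lemma~\ref{notAlphaAlpha} and the intermediate edge-coloured problem $\Hrec{(W_{2k+1},Z_{2k+1})}$, is what makes the reduction work. Your proposal identifies this obstacle (``extract a sequence of milestone colourings'') but offers no mechanism to exclude the bad intermediate states, which is precisely the content of Lemmas~\ref{alphaFrozen}, \ref{notAlphaAlpha}, \ref{colouredtoW} and~\ref{Ktocoloured}.
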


An important first step in each of the polynomial-time algorithms for \Hrec{H} in~\cite{3colReconfig,Wrochna,circularReconfig} is to determine the set of vertices $v$ of $G$ such that $\varphi'(v)=\varphi(v)$ for every homomorphism $\varphi'$ which reconfigures to $\varphi$; i.e. to determine the set of vertices which cannot change their colour under any reconfiguration sequence starting with $\varphi$. We say that such a vertex is \emph{frozen} by $\varphi$ and that the homomorphism $\varphi$ itself is \emph{frozen} if every vertex of $G$ is frozen by $\varphi$. Clearly, a frozen homomorphism corresponds to an isolated vertex in $\bHom(G,H)$. We consider the following decision problem, which we call \Hfrz{H}:
\begin{itemize}
\item[] \textbf{Instance:} A graph $G$.
\item[] \textbf{Question:} Does there exist a frozen $H$-colouring of $G$?
\end{itemize}
Clearly, \Hfrz{H} is in NP. As it turns out, there are many graphs $H$ for which there does not even exist a graph $G$ admitting a frozen $H$-colouring; as a simple example, consider a complete multipartite graph in which every part has size at least two. We say that a connected graph $H$ is \emph{thermal} if there does not exist a graph $G$ such that $V(G)\neq \emptyset$ and $G$ has a frozen $H$-colouring. If $H$ is thermal, then \Hfrz{H} is trivially solvable in polynomial time (as the answer is always ``no''). We prove a dichotomy theorem for \Hfrz{H} which is stated in full generality in Section~\ref{frozenSection}. Here, we state the theorem only for connected graphs.

\begin{thm}
\label{frozenThmConn}
Let $H$ be a connected graph. If $H$ has at most two vertices or $H$ is thermal, then \Hfrz{H} is solvable in polynomial time. Otherwise, \Hfrz{H} is NP-complete. 
\end{thm}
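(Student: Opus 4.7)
The plan is to dispose of the polynomial-time cases quickly and then establish NP-hardness for every connected $H$ with $|V(H)|\geq 3$ that is not thermal. Membership in NP is immediate: a candidate homomorphism $\varphi\in\Hom(G,H)$ is frozen if and only if no single vertex of $G$ can be legally recoloured to a different colour while keeping $\varphi$ an $H$-colouring (with the obvious adjustment at reflexive vertices of $G$), and this can be checked in polynomial time. For $|V(H)|\leq 2$ the graph $H$ is isomorphic to one of a handful of trivial graphs and \Hfrz{H} reduces to a simple structural check on $G$. If $H$ is thermal then every instance is a ``no''-instance by definition, so the problem is trivial. This dispatches the polynomial-time side of the dichotomy.

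For the hard direction fix $H$ connected with $|V(H)|\geq 3$ and not thermal. Since $H$ is not thermal there exists a non-empty graph $G_H$ together with a frozen $H$-colouring $\varphi_H$. I plan to use $(G_H,\varphi_H)$ as a \emph{colour-forcing gadget}: by selecting distinguished vertices of $G_H$ as ``interface'' vertices and amalgamating several copies (possibly combined with small structural modifications to pin down uniqueness of the frozen colouring on the gadget), one obtains a family of gadgets that force prescribed colour-patterns on their interfaces in any frozen $H$-colouring of a larger graph containing them. These gadgets are the ingredients of a polynomial-time reduction from a suitable CSP problem. The natural target is $\CSP{\mathcal{A}_H}$, where $\mathcal{A}_H$ is the relational structure on $V(H)$ whose relations encode the local neighbourhood-intersection conditions characterising frozen vertices; concretely, for $c\in V(H)$ and $d\geq 1$ the natural relation is
\[
R_{c,d}=\bigl\{(c_1,\dots,c_d)\in V(H)^d:\bigcap_{i=1}^d N_H(c_i)=\{c\}\bigr\},
\]
and a vertex of colour $c$ with neighbour colours $(c_1,\dots,c_d)$ is frozen precisely when this tuple lies in $R_{c,d}$. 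A standard gadget construction then packages an instance of $\CSP{\mathcal{A}_H}$ as an instance of \Hfrz{H} in polynomial time.

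The remaining step, and the main obstacle in my view, is to show that $\CSP{\mathcal{A}_H}$ is itself NP-hard. The strategy is algebraic: by the Bulatov--Zhuk dichotomy theorem, it suffices to show that $\mathcal{A}_H$ admits no Siggers (equivalently, cyclic) polymorphism. The relations $R_{c,d}$ are tailored for this: a putative Siggers operation $s\colon V(H)^6\to V(H)$ preserving all of them must be compatible with sharp intersections of neighbourhoods in $H$, and a case analysis -- separating whether $H$ has reflexive vertices, whether it is bipartite, and so on, subject to the hypotheses $|V(H)|\geq 3$, $H$ connected, and $H$ non-thermal -- should rule out every such $s$. The technical hurdle is to make this argument uniform across all admissible $H$; the cleanest route is probably to extract from the frozen pair $(G_H,\varphi_H)$ a small explicit substructure of $\mathcal{A}_H$ whose polymorphism algebra already fails to contain a Siggers operation, thereby obstructing any Siggers polymorphism of the full structure.
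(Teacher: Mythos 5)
Your high-level strategy — reduce from a CSP and show the CSP NP-hard by ruling out a Siggers polymorphism — matches the paper, but the two places where the work actually happens are left as hope rather than argument, and one of them is also set up in a way that risks being wrong.

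First, the algebraic step. You say a case analysis over structural properties of $H$ (reflexivity, bipartiteness, and so on) ``should'' rule out a Siggers polymorphism, and you flag making this uniform as the technical hurdle; that hurdle is the content of the theorem, and it is not closed. The paper circumvents the case analysis entirely by isolating a clean combinatorial hypothesis: it packages the distinguishing sets into a \emph{single} totally symmetric $k$-ary relation (for a fixed $k\geq 3$) that contains no constant tuple, and then proves a self-contained lemma (Lemma~\ref{symmetricLem}) that any non-empty totally symmetric $k$-graph, $k\geq 3$, with no constant tuple has NP-complete CSP. The proof is a short, uniform argument that applies the Siggers/4-ary characterisation (Theorem~\ref{markThm}) to four carefully chosen tuples and derives a contradiction with the minimality of the number of distinct entries in a tuple of the relation. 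The absence of constant tuples is guaranteed by Lemma~\ref{notK1K2}, which uses exactly the hypotheses of your theorem ($H$ connected, non-thermal, at least three vertices). Without a lemma of this kind your case analysis has no anchor and I see no reason it terminates cleanly.

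Second, and this is a correctness issue rather than just an unfinished step: your relations $R_{c,d}$ quantify over all of $V(H)$ rather than over the set $S_H$ of non-redundant vertices. The paper's Corollary~\ref{frozenGoesToS} shows every frozen colouring maps into $S_H$, and its Lemma~\ref{distinguishingLem} characterises frozen colourings via distinguishing sets $D\subseteq S_H$. Because your $R_{c,d}$ admits tuples of colours outside $S_H$ (for example, a constant tuple $(c',\dots,c')$ when $c'$ is a pendant vertex of $H$ with unique neighbour $c$), a homomorphism $\mathcal{G}\to\mathcal{A}_H$ can assign colours that no frozen $H$-colouring of the constructed graph could realise. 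That breaks the ``if'' direction of your intended equivalence. These extra constant tuples also undermine the ``no constant tuple'' hypothesis that makes the algebraic argument go through, so the two gaps compound. Finally, the ``standard gadget construction'' is doing real work in the paper: it attaches a copy of the categorical power $J^{|V(J)|}$ (where $J$ is the freezer of $H'$) at each variable vertex precisely to force, in any frozen colouring, a locally surjective map onto $S_H$ there; a generic amalgamation of copies of $(G_H,\varphi_H)$ does not obviously give you that control.
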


In the next section, we prove Theorem~\ref{wheelThm} by showing that, for $k\geq2$, the \Hrec{K_{2k+1}} problem can be reduced to the \Hrec{W_{2k+1}} problem, and therefore the latter is PSPACE-complete by Theorem~\ref{colouringDichotomy}. This proof involves an intermediate reduction to and from a reconfiguration problem for so called ``edge-coloured homomorphisms.'' In Section~\ref{frozenSection}, we prove a dichotomy theorem for \Hfrz{H}. The hardness proof involves showing that a certain constraint satisfaction problem, which we show is NP-complete using algebraic techniques,  reduces to the \Hfrz{H} problem. We conclude the paper in Section~\ref{conclusion} with some remarks and open problems. 

\section{Odd Wheel Recolouring is Hard}
\label{wheelsSec}

\subsection{Preliminaries}
Let us first take the time to fix some notation and terminology and make some preliminary observations. Note that $W_3$ is simply $K_4$ and so  \Hrec{W_3} is PSPACE-complete by Theorem~\ref{colouringDichotomy}. Thus, it suffices to prove Theorem~\ref{wheelThm} for $k\geq2$. The following two definitions are standard. 

\begin{defn}
Let $C_m$ denote the cycle with vertex set $\{0,\dots,m-1\}$ where $ij$ is an edge if $i\equiv j\pm1\bmod m$.
\end{defn}

\begin{defn}
Let $P_m$ be the path on $m$ vertices obtained from $C_m$ by deleting the edge from $0$ to $m-1$. 
\end{defn}

We write the vertex set of $W_{2k+1}$ as $\{0,\dots, 2k\}\cup\{\alpha\}$ where the vertices of $\{0,\dots,2k\}$ form a cycle on $2k+1$ vertices with adjacencies are as in $C_{2k+1}$ and $\alpha$ is adjacent to every vertex of $\{0,\dots,2k\}$. We always view the colours in $\{0,\dots,2k\}$ modulo $2k+1$.

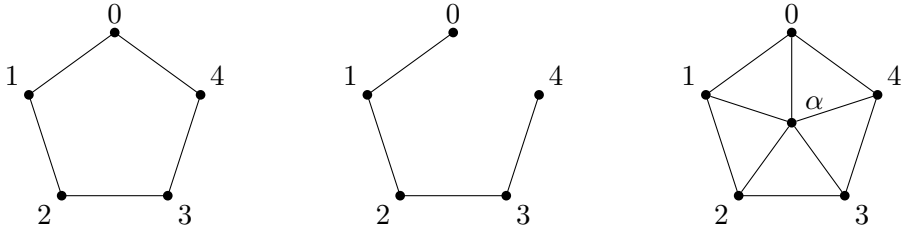
\begin{figure}[htbp]

\begin{tikzpicture}
   \newdimen\R
   \R=1.2cm
  \draw (18:\R)
\foreach [count=\n from 0] \x in {90,162,...,378} {
	-- (\x:\R) node [smallblack, label={[label distance=-2pt]\x:{\n}}] {}
};

\begin{scope}[shift={(4.5,0)}]

  \draw (90:\R)
\foreach [count=\n from 0] \x in {90,162,...,378} {
	-- (\x:\R) node [smallblack, label={[label distance=-2pt]\x:{\n}}] {} 
};
\end{scope}

\begin{scope}[shift={(9,0)}]
  \draw (18:\R)
\foreach [count=\n from 0] \x in {90,162,...,378} {
	-- (\x:\R) node [smallblack, label={[label distance=-2pt]\x:{\n}}] (v\n){}
};
\draw (0,0) node [smallblack, label={[label distance=0pt]45:{$\alpha$}}] (alpha) {};
\draw (alpha) -- (v0);
\draw (alpha) -- (v1);
\draw (alpha) -- (v2);
\draw (alpha) -- (v3);
\draw (alpha) -- (v4);
\end{scope}

\end{tikzpicture}

\caption{The graphs $C_5$, $P_5$ and $W_5$.}
\label{C5P5W5Fig}
\end{figure}

In the proof of Theorem~\ref{wheelThm}, it will be convenient to reduce \Hrec{K_{2k+1}} to an intermediate reconfiguration problem for so called ``edge-coloured homomorphisms,'' defined below, and then to reduce this problem to \Hrec{W_{2k+1}}. 

\begin{defn}
An \emph{edge-coloured graph} is a tuple $G=(G_1,\dots,G_k)$ for some $k\geq1$, where $G_1,\dots, G_k$ are graphs with the same vertex set. The \emph{vertex set} of $G$ is taken to be $V(G):=V(G_1)$. 
\end{defn}

\begin{defn}
Given edge-coloured graphs $G=(G_1,\dots, G_k)$ and $H=(H_1,\dots, H_k)$, a \emph{homomorphism} from $G$ to $H$, or an \emph{$H$-colouring} of $G$, is a function $f:V(G)\to V(H)$ such that $f$ is a homomorphism from $G_i$ to $H_i$ for all $1\leq i\leq k$. 
\end{defn}

One should think of an edge-coloured graph $G=(G_1,\dots,G_k)$ as a multigraph obtained from  superimposing the graphs $G_1,\dots, G_k$ on top of each other (keeping any multiple edges that arise) and colouring the edges of $G_i$ with colour $i$ for $1\leq i\leq k$. In this sense, an edge-coloured homomorphism is a mapping which preserves ``coloured adjacencies.'' Problems regarding edge-coloured homomorphisms are well studied; see, e.g.,~\cite{RickThesis, signed17, Alon, BrewRel}. 

Given an edge-coloured graph $H=(H_1, H_2, \dots, H_k)$, the \Hrec{H} problem extends the definition for graphs in the natural way: the input is an edge-coloured graph $G$ and a pair of $H$-colourings $\varphi$ and $\psi$ of $G$ and the goal is to decide whether $\varphi$ there exists $H$-colourings $f_0,\dots,f_m$ such that $f_0=\varphi$, $f_m=\psi$ and $f_i(u)f_{i+1}(v)\in E(H_j)$ for every $0\leq i<m$, $1\leq j\leq k$ and $uv\in E(G_j)$. The proof of Theorem~\ref{wheelThm} is divided into the following two lemmas; first, we give a definition.

\begin{defn}
For $m\geq1$, let $Z_m$ be the graph obtained from a clique on vertex set $\{0,\dots,m\}\cup \{\alpha\}$ by adding a loop at every vertex except for $\alpha$. 
\end{defn}

\begin{figure}[htbp]

\vspace{-0.5cm}

\begin{tikzpicture}
   \newdimen\R
   \R=1.2cm
  \draw (18:\R)
\foreach [count=\n from 0] \x in {90,162,...,378} {
	-- (\x:\R) node [smallblack, label={[label distance=8pt]\x:{\n}}] (v\n){}
};

  \draw (18:1.3*\R)
\foreach [count=\n from 0] \x in {90,162,...,378} {
	 (\x:1.3*\R) node [microvert] (loop\n){}
};

\draw (0,0) node [smallblack, label={[label distance=0pt]45:{$\alpha$}}] (alpha) {};
\draw (alpha) -- (v0);
\draw (alpha) -- (v1);
\draw (alpha) -- (v2);
\draw (alpha) -- (v3);
\draw (alpha) -- (v4);

\draw [black] (v1) to[out=18,in=162]  (v4);
\draw [black] (v0) to[out=234,in=90]  (v2);
\draw [black] (v1) to[out=306,in=162]  (v3);
\draw [black] (v2) to[out=378,in=234]  (v4);
\draw [black] (v3) to[out=450,in=306]  (v0);

\draw [black] (v0) to[out=45,in=0]  (loop0);
\draw [black] (loop0) to[out=180,in=135]  (v0);

\draw [black] (v1) to[out=117,in=72]  (loop1);
\draw [black] (loop1) to[out=252,in=207]  (v1);

\draw [black] (v2) to[out=189,in=144]  (loop2);
\draw [black] (loop2) to[out=324,in=279]  (v2);

\draw [black] (v3) to[out=271,in=216]  (loop3);
\draw [black] (loop3) to[out=396,in=351]  (v3);

\draw [black] (v4) to[out=343,in=288]  (loop4);
\draw [black] (loop4) to[out=468,in=423]  (v4);

\end{tikzpicture}

\caption{The graph $Z_5$.}
\label{Z5Fig}
\end{figure}
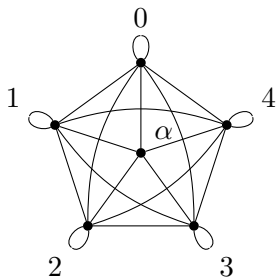

\begin{lem}
\label{colouredtoW}
\Hrec{(W_{2k+1},Z_{2k+1})} reduces to \Hrec{W_{2k+1}}. 
\end{lem}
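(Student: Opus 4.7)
The plan is a polynomial-time reduction by gadget substitution. Given an instance $(G,\varphi,\psi)$ of \Hrec{(W_{2k+1},Z_{2k+1})} with $G=(G_1,G_2)$, I would construct a loopless graph $G'$ and $W_{2k+1}$-colourings $\varphi',\psi'$ of $G'$ such that $\varphi$ reconfigures to $\psi$ in the edge-coloured problem if and only if $\varphi'$ reconfigures to $\psi'$ in $\bHom(G',W_{2k+1})$. First I would observe that $Z_{2k+1}$ is a clique on $\{0,\dots,2k\}\cup\{\alpha\}$ with loops at every vertex except $\alpha$, so the only pair of colours forbidden on a $Z_{2k+1}$-edge is $(\alpha,\alpha)$. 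Consequently the reduction only needs to simulate, for each edge $uv\in E(G_2)$, the static constraint ``$f(u)$ and $f(v)$ are not both $\alpha$'' using $W_{2k+1}$-edges; the ``across'' consecutive-colouring conditions for a single-vertex reconfiguration step are automatically enforced once every intermediate colouring satisfies the static constraint.

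The key primitive is a \emph{pinning gadget}: a loopless graph $F$ with a distinguished vertex $w$ such that every $W_{2k+1}$-colouring of $F$ assigns $w$ the colour $\alpha$. I would take $F$ to be a copy of $W_{2k+1}$ with $w$ the central vertex. Any $W_{2k+1}$-colouring of $F$ is an endomorphism of $W_{2k+1}$, and for $k\ge 2$ the outer cycle of $F$ cannot fold onto a triangle $\{\alpha,i,i+1\}$ of $W_{2k+1}$ because the central vertex of $F$ would then be forced to a vertex in $N_W(\alpha)\cap N_W(i)\cap N_W(i+1)=\varnothing$. Hence every endomorphism is an automorphism of $W_{2k+1}$, and every such automorphism fixes the unique universal vertex $\alpha$, giving $f(w)=\alpha$.

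Using pinning as a subroutine, I would build, for each $Z_{2k+1}$-edge $uv\in E(G_2)$, a gadget $K_{uv}$ on vertex set $\{u,v\}\cup S_{uv}$ in which $S_{uv}$ is composed of several pinned-$\alpha$ vertices together with short auxiliary paths coupling $u$ and $v$ to these pinned vertices, arranged so that the set of pairs $(f(u),f(v))$ realisable by $W_{2k+1}$-colourings of $K_{uv}$ is exactly $V(W_{2k+1})^2\setminus\{(\alpha,\alpha)\}$. I would then set $G':=G_1\cup\bigcup_{uv\in E(G_2)}K_{uv}$, identifying the distinguished vertices of each $K_{uv}$ with the corresponding $u,v\in V(G_1)$, and extend $\varphi,\psi$ to $\varphi',\psi'$ by choosing canonical colourings of the gadget vertices consistent with the prescribed values at $u$ and $v$. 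The forward direction of the equivalence proceeds by simulating each edge-coloured move in $G$ by a short burst of $W_{2k+1}$-moves in $G'$ that first updates the gadget vertices and then re-colours the affected vertex of $V(G)$. The reverse direction follows from the observation that the gadget forces the $Z_{2k+1}$-edge constraints at every intermediate colouring, so restricting any $W_{2k+1}$-reconfiguration sequence in $G'$ to $V(G)$ yields a valid edge-coloured reconfiguration sequence in $G$.

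The main obstacle is the explicit construction of the gadget $K_{uv}$: it must simultaneously (i) realise exactly the relation $V(W_{2k+1})^2\setminus\{(\alpha,\alpha)\}$ as the set of its $(u,v)$-projections, and (ii) have enough internal reconfigurability that every edge-coloured move can be simulated locally. Obstacle (i) is delicate because $N_W(\alpha)$ is the whole outer cycle of $W_{2k+1}$, so naive ``common neighbour'' or short-path gadgets fail to isolate the $(\alpha,\alpha)$ case; the idea is to combine several pinning gadgets with coupling edges that force a contradiction precisely when both $u$ and $v$ are coloured $\alpha$, while every other pair of colours admits a consistent extension. Obstacle (ii) reduces to verifying that, for each fixed $(f(u),f(v))\in E(Z_{2k+1})$, the internal colourings of $K_{uv}$ form a connected subgraph of $\bHom(K_{uv},W_{2k+1})$, which follows once the auxiliary paths are chosen long and flexible enough.
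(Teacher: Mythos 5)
Your plan diverges from the paper's in a crucial way, and the divergence is exactly at the point you flag as ``the main obstacle.'' You want a gadget $K_{uv}$ whose $W_{2k+1}$-colourings realise \emph{exactly} the pairs $V(W_{2k+1})^2\setminus\{(\alpha,\alpha)\}$ at $(u,v)$, so that the constraint is enforced statically. The paper's gadget $F_k(x,y)$ does \emph{not} do this: Observation~2.10 explicitly says every pair $(c_1,c_2)$, including $(\alpha,\alpha)$, is realisable. Instead the paper makes any colouring with $f(x)=f(y)=\alpha$ \emph{frozen} (Lemma~2.12), i.e.\ an isolated vertex in $\bHom(F_k(x,y),W_{2k+1})$. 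Then in the forward direction, if some intermediate $f_i'$ violated the $Z_{2k+1}$-constraint, $x$ and $y$ would be frozen by $f_i'$, forcing $f_0'(x)=f_0'(y)=\alpha$ and contradicting that $\varphi$ is a $(W,Z)$-colouring. This ``allow it but freeze it'' idea is the novel part of the reduction (the authors call it out as a new twist on the indicator construction), and it is what your proposal is missing.

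Beyond being a different approach, your version has a genuine unresolved gap. You need $K_{uv}$ to both (i) statically exclude $(\alpha,\alpha)$ and (ii) be flexible enough that internal reconfiguration can track edge-coloured moves, and you propose to achieve (ii) by taking ``auxiliary paths long and flexible enough.'' That fails for (i): in $W_{2k+1}$ with $k\geq 2$, any two vertices have a common neighbour ($\alpha$ if both are on the rim, a rim vertex otherwise), so the pair of endpoints of any path of length at least $2$ ranges over \emph{all} of $V(W_{2k+1})^2$, including $(\alpha,\alpha)$. Hence any gadget built from long paths to pinned vertices does not exclude $(\alpha,\alpha)$. To exclude it statically you must use rigid substructure (e.g.\ a triangle one of whose vertices is forced onto the rim), and such rigidity works directly against the internal reconfigurability you need in the forward direction — this tension is precisely what the paper sidesteps by choosing a flexible gadget and pushing the exclusion of $(\alpha,\alpha)$ from the static level to the dynamic (frozenness) level. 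In short: your reverse direction is fine modulo the gadget, but your forward direction and the gadget itself are the entire difficulty, and the route you sketch for them does not go through.
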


\begin{lem}
\label{Ktocoloured}
\Hrec{K_{2k+1}} reduces to \Hrec{(W_{2k+1},Z_{2k+1})}.
\end{lem}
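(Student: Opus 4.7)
I plan to reduce an instance $(G, \varphi, \psi)$ of \Hrec{K_{2k+1}} to an instance $(G', \varphi', \psi')$ of \Hrec{(W_{2k+1}, Z_{2k+1})} via an explicit gadget construction. Take $V(G) \subseteq V(G')$, and for each edge $uv \in E(G)$ adjoin $2k-2$ internal vertices $p_{uv}^{1}, \ldots, p_{uv}^{2k-2}$ together with the path $u - p_{uv}^{1} - \cdots - p_{uv}^{2k-2} - v$ of length $2k-1$ in $G_1'$, and place $Z$-loops in $G_2'$ on each internal vertex $p_{uv}^{i}$, forcing it to take a non-$\alpha$ colour. The vertices of $V(G)$ receive no $Z$-loops, so they may take the colour $\alpha$ during intermediate stages of the reconfiguration. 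Define $\varphi'$ and $\psi'$ to agree with $\varphi, \psi$ on $V(G)$ and to fill each gadget with a fixed length-$(2k-1)$ walk in $C_{2k+1}$ between the endpoint colours.

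The static correctness of the construction rests on the arithmetic fact that a length-$\ell$ walk in $C_{2k+1}$ from $c$ to $c'$ exists iff $c' - c \in \{-\ell, -\ell+2, \ldots, \ell\} \pmod{2k+1}$. For $\ell = 2k-1$, these displacements cover exactly the nonzero residues modulo $2k+1$. Hence, in any colouring with $f$ taking non-$\alpha$ values on the entire gadget, the gadget forces $f(u) \neq f(v)$; so a \emph{proper} $(W, Z)$-colouring of $G'$ (one with $f(v) \neq \alpha$ for every $v \in V(G)$) restricts to a proper $K_{2k+1}$-colouring of $G$, and conversely every proper $K_{2k+1}$-colouring of $G$ extends to such a colouring of $G'$.

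For the forward direction (reconfiguration in $K_{2k+1}$ implies reconfiguration in $(W_{2k+1},Z_{2k+1})$) I simulate each single-vertex $K$-step $v : c \to c'$ by three $(W, Z)$-phases: (i) change $f(v)$ from $c$ to $\alpha$, which is legal since $\alpha$ is $W$-adjacent to all non-$\alpha$ colours and $v$'s $G_1'$-neighbours (the $p_{uv}^{2k-2}$ over incident gadgets) are non-$\alpha$; (ii) while $v = \alpha$, reconfigure each incident gadget so that $p_{uv}^{2k-2}$ becomes cycle-adjacent to $c'$ — this is possible because a length-$(2k-2)$ walk from $f(u)$ in $C_{2k+1}$ reaches every vertex other than $f(u) \pm 1$, and the $K$-validity condition $c' \neq f(u)$ ensures at least one of $c' - 1, c' + 1$ is reachable; and (iii) change $f(v)$ from $\alpha$ to $c'$. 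Intra-gadget reconfigurations with fixed endpoint colours are connected by adjacent-transposition moves on the $\pm 1$ step sequence of the walk, so any two walks realising the same endpoints can be interconverted.

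For the backward direction I project a $(W, Z)$-reconfiguration $\varphi' = f_{0}, \ldots, f_{n} = \psi'$ onto $V(G)$: internal-vertex moves give no change in the $K$-projection, while each round-trip of a vertex $v \in V(G)$ through $\alpha$ (from non-$\alpha$ $c$ back to non-$\alpha$ $c'$) projects to a $K$-step $v : c \to c'$ that is automatically valid, since the $\neq$-forcing property of the gadgets guarantees $c' \neq f(u)$ for every $G$-neighbour $u$ in the proper state reached at the end of the round-trip. I expect the main obstacle to be precisely this backward direction when multiple $V(G)$-vertices simultaneously take the colour $\alpha$ during the $(W, Z)$-reconfiguration: disentangling such overlapping $\alpha$-round-trips into a genuinely sequential $K$-reconfiguration requires a careful serialisation argument, which I expect to handle by exploiting the fact that each vertex of $G$ has at most $2k$ distinctly-coloured neighbours in any proper $K_{2k+1}$-colouring, leaving at least one ``spare'' colour available to insert as an auxiliary intermediate $K$-recolouring value whenever the natural serialisation fails.
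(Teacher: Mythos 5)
Your construction is unsound, and the flaw is more fundamental than the serialisation difficulty you flag at the end. In your $G'$, the vertices of $V(G)$ carry no $Z$-loop and no $G_2'$-constraint, and their only $G_1'$-neighbours are gadget-internal vertices, which are always non-$\alpha$. Consequently \emph{every} vertex of $V(G)$ can be recoloured to $\alpha$ at any time, one after another. Once all of $V(G)$ is at $\alpha$, the gadget-internal vertices form disjoint copies of $P_{2k-2}$ with both ends adjacent only to colour $\alpha$ (which is adjacent to the whole of $C_{2k+1}$), so by Proposition~\ref{treeMix} they can be reconfigured freely. Then the vertices of $V(G)$ can be lowered from $\alpha$ to any colours that yield a valid coloring of the target form. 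So $\varphi'$ \emph{always} reconfigures to $\psi'$ for any two inputs of your construction, and the backward direction of your claimed equivalence is false outright: your reduction would map every instance of \Hrec{K_{2k+1}}, including ``no'' instances, to a ``yes'' instance. There is no serialisation argument to be had; the $(W_{2k+1},Z_{2k+1})$-instance simply does not remember the $K_{2k+1}$-component structure. Relatedly, the ``spare colour'' remark is also unsalvageable on its own terms: a vertex's $2k$ neighbours can realise all $2k$ colours other than its own, so there need be no admissible alternative colour, and indeed if a spare always existed then $\bHom(G,K_{2k+1})$ would always be connected, contradicting PSPACE-hardness.

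The paper's proof shows what additional structure is needed to prevent exactly this collapse. It subdivides edges as you do, but then (i) attaches a separate copy of $W_{2k+1}$ and joins to its $\alpha$-vertex every vertex that is \emph{not} adjacent to an original vertex — in particular every original vertex and every mid-path subdivision vertex — so those vertices are forced to stay in $\{0,\dots,2k\}$ and can never visit $\alpha$; (ii) attaches a ``locking'' cycle of length $2k+1$ to each original vertex; and (iii) places $G_2'$-edges between the $\alpha$-eligible subdivision vertices (those adjacent to an original) and the $\alpha$-eligible locking vertices of every \emph{other} original vertex. The effect (Claim~\ref{restrictProper}) is that whenever a subdivision vertex takes colour $\alpha$ — which is the only way an intermediate map can fail to restrict to a proper $K_{2k+1}$-colouring — every original vertex except one becomes fixed, so the offending intermediate map can be deleted from the reconfiguration sequence and the projection to $V(G)$ remains a valid $K_{2k+1}$-reconfiguration. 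Your proposal has none of this rigidity: with only $Z$-loops on internal vertices, nothing stops all of $V(G)$ from escaping to $\alpha$ simultaneously.
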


We prove Lemma~\ref{colouredtoW} in the next subsection and then we prove Lemma~\ref{Ktocoloured} in the subsection which follows it.

For readers familiar with the ``indicator construction'' in Hell and Ne\v{s}et\v{r}il's~\cite{Hcol} proof of the \Hcol{H} dichotomy, parts of our reductions will look familiar.  We mimic the classical reduction of \Hcol{K_{2k+1}} to \Hcol{C_{2k+1}}. Let $G$ be a graph and replace each edge $uv\in E(G)$ with a copy of $P_{2k}$, identifying the ends of the path with $u$ and $v$, to create a graph $G'$.  The key to the reduction is that for any distinct pair of vertices in $C_{2k+1}$ there is a homomorphism from $P_{2k}$ to $C_{2k+1}$ with the ends of the path mapping to the two prescribed vertices, but there is no homomorphism mapping the ends of the path to the same vertex. Thus, $G'\to C_{2k+1}$ if and only if $G\to K_{2k+1}$. 

However, this reduction does not work to reduce \Hcol{K_{2k+1}} to \Hcol{C_{2k+1}} as the homomorphisms from $P_{2k}$ to $C_{2k+1}$ do not admit the necessary reconfiguration sequences between them.\footnote{The fact that this reduction does not work should come as no surprise since \Hcol{C_{2k+1}} is solvable in polynomial time while \Hrec{K_{2k+1}} is PSPACE-complete.}  To add more flexibility, we may view the $C_{2k+1}$-colourings of $P_{2k+1}$ as $W_{2k+1}$-colourings which allows us to find the reconfiguration sequences that we require.  Unfortunately, this now becomes too flexible in the sense that we can reconfigure to $W_{2k+1}$-colourings which map the endpoints of $P_{2k}$ to the same vertex (which corresponds to allowing non-proper $K_{2k+1}$-colouring of $G$).  We eliminate unwanted homomorphisms from $P_{2k}$ to $W_{2k+1}$ from reconfiguration sequences using a second indicator $F_k(x,y)$. In what we believe is a novel modification of the indicator construction for reconfiguration, we exclude the unwanted homomorphisms by ensuring they are frozen (isolated vertices in $\bHom(F_k(x,y),W_{2k+1})$) and thus cannot appear in a reconfiguration sequence, or they are pendant vertices in $\bHom(G',W_{2k+1})$ and can be removed from any reconfiguration sequence.

\subsection{Freezing Gadgets and Proof of Lemma~\ref{colouredtoW}}

The following gadget will be used to keep control over the set of vertices which are mapped to $\alpha$. See Figure~\ref{freezingGadgetFig} for an illustration of the ``main parts'' of this gadget. 

\begin{defn}[Freezing Gadget]
Let $x$ and $y$ be vertices. Define $F_{k}(x,y)$ to be the graph with vertex set
\[\{x,y\}\cup\left\{z_0^x,\dots,z_{4k-3}^x\right\}\cup\left\{z_0^y,\dots,z_{4k-3}^y\right\}\cup\left\{b^x,b^y\right\}\cup \{w_0,\dots,w_{2k}\}\cup\{\alpha'\}\] 
where 
\begin{itemize}
\item $z_0^x\cdots z_{4k-3}^x$ and $z_0^y\cdots z_{4k-3}^y$ are cycles, 
\item $z_1^x$ and $z_{2k}^x$ are adjacent to $x$ and $z_1^y$ and $z_{2k}^y$ are adjacent to $y$, 
\item $z_0^x$ and $z_{2k-1}^x$ are adjacent to $b^x$ and $z_0^y$ and $z_{2k-1}^y$ are adjacent to $b^y$,
\item $y$ is adjacent to $b^x$ and $x$ is adjacent to $b^y$,
\item $w_0\cdots w_{2k}$ is a cycle, and
\item $\alpha'$ is joined to every vertex of $F_k(x,y)$, except for itself, $x,y$ and their neighbours.
\end{itemize}
\end{defn}

\begin{figure}[htbp]
\begin{tikzpicture}[scale=1.25]
\def\rightShift{2.75}
\def\upShift{0.0}
\node[smallblack,label={[label distance=-4pt]135:{$z^y_1$}}] (zy1) at (0,0){};
\node[smallblack,label=left:{$z^y_2$}] (zy2) at (0,-0.5){};
\node[label=left:{$\vdots$}](dots1) at (0,-1){};
\node[smallblack,label={[label distance=-10pt]225:{$z^y_{2k-1}$}}] (zy2k-1) at (0,-1.5){};
\node[smallblack,label={[label distance=-7pt]315:{$z^y_{2k}$}}] (zy2k) at (1.5,-1.5){};
\node[label=right:{$\vdots$}](dots2) at (1.5,-1){};
\node[smallblack,label=right:{$z^y_{4k-3}$}] (zy4k-3) at (1.5,-0.5){};
\node[smallblack,label={[label distance=-4pt]45:{$z^y_{0}$}}] (zy0) at (1.5,0){};
\node[smallblack,label=right:{$y$}] (y) at (0.75,-0.75){};
\draw[black] (zy1)--(zy2)--(zy2k-1)--(zy2k)--(zy4k-3)--(zy0)--(zy1)--(y)--(zy2k);

\node[smallblack,label={[label distance=-4pt]135:{$z^x_1$}}] (zx1) at (1.5+\rightShift,1.5+\upShift){};
\node[smallblack,label=left:{$z^x_2$}] (zx2) at (1.5+\rightShift,1+\upShift){};
\node[label=left:{$\vdots$}](dots3) at (1.5+\rightShift,0.5+\upShift){};
\node[smallblack,label={[label distance=-7pt]225:{$z^x_{2k-1}$}}] (zx2k-1) at (1.5+\rightShift,0+\upShift){};
\node[smallblack,label={[label distance=-6pt]315:{$z^x_{2k}$}}] (zx2k) at (3+\rightShift,0+\upShift){};
\node[label=right:{$\vdots$}](dots4) at (3+\rightShift,0.5+\upShift){};
\node[smallblack,label=right:{$z^x_{4k-3}$}] (zx4k-3) at (3+\rightShift,1+\upShift){};
\node[smallblack,label={[label distance=-4pt]45:{$z^x_{0}$}}] (zx0) at (3+\rightShift,1.5+\upShift){};
\node[smallblack,label=right:{$x$}] (x) at (2.25+\rightShift,0.75+\upShift){};
\draw[black] (zx1)--(zx2)--(zx2k-1)--(zx2k)--(zx4k-3)--(zx0)--(zx1)--(x)--(zx2k);

\node[smallblack,label={[label distance=-3pt]280:{$b^y$}}] (ay) at (2.75,-1.75){};
\node[smallblack,label={[label distance=-3pt]110:{$b^x$}}] (ax) at (0.25+\rightShift,1.75+\upShift){};

\draw [black] (ay) to[out=90,in=0] (zy0);
\draw [black] (ay) to[out=200,in=315]  (zy2k-1);
\draw [black] (ay) to[out=35,in=240]  (x);

\draw [black] (ax) to[out=270,in=180] (zx2k-1);
\draw [black] (ax) to[out=20,in=135]  (zx0);
\draw [black] (ax) to[out=215,in=60]  (y);

\end{tikzpicture}
\caption{The graph $F_k(x,y)\setminus \left\{w_0,\dots,w_{2k},\alpha'\right\}$.}
\label{freezingGadgetFig}
\end{figure}
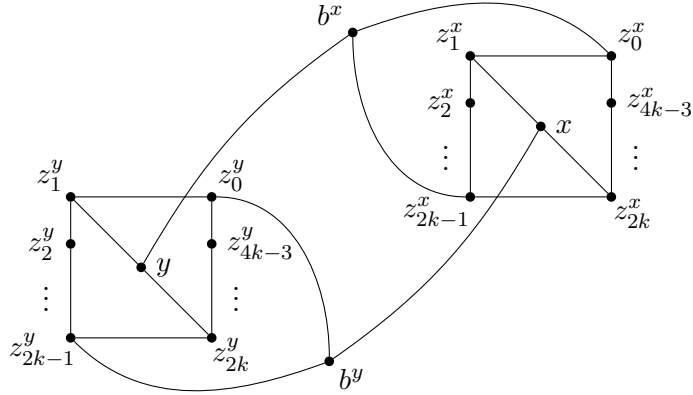

As $F_k(x,y)$ contains a copy of $W_{2k+1}$, the homomorphisms of $W_{2k+1} \to W_{2k+1}$ play a key role in our analysis.  Observe that under any homomorphism $\varphi: W_{2k+1} \to W_{2k+1}$, for $k \geq 2$, the outer cycle $W_{2k+1} \setminus \{ \alpha \}$ must map to an odd cycle every vertex of which is adjacent to $\varphi(\alpha)$.  If follows that $\varphi(\alpha) = \alpha$ and the outer cycle maps to the outer cycle, so $\varphi$ is frozen.  (For readers familiar with the concept of a \emph{core}, these observations are immediate since $W_{2k+1}$ is a core, any $\varphi: W_{2k+1} \to W_{2k+1}$ must be an automorphism.) Applying these arguments to the copy of $W_{2k+1}$ in $F_k(x,y)$ (not shown in the figure) we get the following.

\begin{obs}
\label{toalpha}
The only vertices that can map to $\alpha$ under a homomorphism $F_k(x,y) \to W_{2k+1}$ are $\alpha'$ (which must), $x$, $y$, and the neighbours of $x$ and $y$.  
\end{obs}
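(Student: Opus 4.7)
The plan is to exploit the copy of $W_{2k+1}$ sitting inside $F_k(x,y)$ together with the core property of $W_{2k+1}$ recorded in the paragraph preceding the statement. First I would identify the subgraph of $F_k(x,y)$ induced by $\{w_0,\dots,w_{2k},\alpha'\}$ as a copy of $W_{2k+1}$: the vertices $w_0,\dots,w_{2k}$ form a $(2k+1)$-cycle by construction, and since none of them equals $x$, $y$, or a neighbour of $x$ or $y$, the vertex $\alpha'$ is joined to each $w_i$.

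Next, I would fix an arbitrary homomorphism $\varphi \colon F_k(x,y) \to W_{2k+1}$ and restrict it to this copy. The restriction is a homomorphism $W_{2k+1} \to W_{2k+1}$, and by the discussion preceding the observation (using $k \geq 2$, so $W_{2k+1}$ is a core) it must be an automorphism. Since $\alpha$ is the unique vertex of $W_{2k+1}$ of degree $2k+1$, every automorphism of $W_{2k+1}$ fixes $\alpha$, so $\varphi(\alpha') = \alpha$.

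To finish, I would use that $\alpha$ carries no loop in $W_{2k+1}$: every vertex of $F_k(x,y)$ adjacent to $\alpha'$ is mapped by $\varphi$ to a neighbour of $\alpha$, hence into $\{0,\dots,2k\}$, and so cannot be sent to $\alpha$ itself. Therefore the only vertices that can map to $\alpha$ are $\alpha'$ (which must) together with the non-neighbours of $\alpha'$ in $F_k(x,y)$. By the definition of $F_k(x,y)$, those non-neighbours are precisely $x$, $y$, and their neighbours, yielding the claim. The only substantive ingredient is the core property of $W_{2k+1}$, which is already on the table; the rest is a direct read-off from the construction, so I do not anticipate any real obstacle.
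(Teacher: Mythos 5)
Your argument is exactly the one the paper uses: exhibit the copy of $W_{2k+1}$ on $\{w_0,\dots,w_{2k},\alpha'\}$, invoke the core property of $W_{2k+1}$ (for $k\geq 2$) to force $\alpha'\mapsto\alpha$, and then use that $\alpha$ has no loop so no neighbour of $\alpha'$ can also map to $\alpha$, leaving only $\alpha'$, $x$, $y$, and their neighbours as candidates. Correct and the same approach as the paper.
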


A second useful observation is the following.

\begin{obs}
\label{anyPair}
For every $c_1,c_2\in V(W_{2k+1})$ there exists a $W_{2k+1}$-colouring $f$ of $F_k(x,y)$ such that $f(x)=c_1$ and $f(y)=c_2$. 
\end{obs}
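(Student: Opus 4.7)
The plan is to construct $f$ explicitly. I would first set $f(\alpha')=\alpha$; since $\alpha'$ is adjacent to every vertex of $F_k(x,y)$ other than $x,y$ and their neighbours, this forces $f(v)$ to lie on the outer cycle $C_{2k+1}\subset W_{2k+1}$ for every $w_i$ and every $z_j^x,z_j^y$ with $j\notin\{1,2k\}$, while leaving $f(z_1^x),f(z_{2k}^x),f(z_1^y),f(z_{2k}^y),f(b^x),f(b^y)$ unconstrained by $\alpha'$. I would then map the $w$-cycle identically via $f(w_i)=i$. By the symmetry of $F_k(x,y)$ in $(x,y)$, it suffices to describe how to extend $f$ to the $x$-gadget, which is the cycle $z_0^x z_1^x\cdots z_{4k-3}^x$ of length $4k-2$ together with $x$ attached at the antipodal pair $\{z_1^x,z_{2k}^x\}$ and $b^x$ attached at the antipodal pair $\{z_0^x,z_{2k-1}^x\}$ (shifted by one step on the $z$-cycle).

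I would handle the extension via a short case split on $c_1,c_2$, giving an explicit homomorphism in each case. If $c_1$ is outer, I set $f(z_1^x)=f(z_{2k}^x)=\alpha$, pick $f(b^x)$ to be an outer vertex adjacent to $c_2$ (namely $c_2+1\bmod(2k+1)$ if $c_2$ is outer, or any outer vertex if $c_2=\alpha$), and set $f(z_0^x)=f(z_{2k-1}^x)$ to be an outer neighbour of $f(b^x)$; each of the two length-$(2k-1)$ arcs of the $z$-cycle then reduces to choosing a walk of length $2k-4$ in $C_{2k+1}$ between prescribed outer endpoints, which is easily arranged by taking a trivial closed walk. If $c_1=\alpha$ and $c_2$ is outer, I take $f(b^x)=\alpha$ and $f(z_j^x)=j\bmod 2$, mapping the $z$-cycle alternately along a single edge of $W_{2k+1}$. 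If $c_1=c_2=\alpha$, I take $f(b^x)=2k$ and $f(z_j^x)=\min(j,4k-2-j)$, mapping the $z$-cycle as a ``there-and-back'' walk $0,1,\ldots,2k-1,2k-2,\ldots,1$ along the outer cycle.

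In every case, one verifies directly that the $z$-cycle edges (including the two ``extra'' edges $z_0^x z_1^x$ and $z_{2k-1}^x z_{2k}^x$), the attachments of $x$ and $b^x$, the cross-edge $b^x y$, and the outer constraint on $f(z_j^x)$ for $j\notin\{1,2k\}$ are all respected. Applying these constructions symmetrically to the $y$-gadget, with $f(b^y)$ chosen analogously in $N(c_1)$, yields the desired homomorphism $f$ with $f(x)=c_1$ and $f(y)=c_2$.

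The main subtlety is that the two antipodal pairs $\{z_1^x,z_{2k}^x\}$ and $\{z_0^x,z_{2k-1}^x\}$ are joined by the extra edges $z_0^x z_1^x$ and $z_{2k-1}^x z_{2k}^x$, so the attachments of $x$ and $b^x$ to the $z$-cycle interact nontrivially, and this must be balanced against the outer constraint imposed by $\alpha'$ on most $z_j^x$'s. The three explicit maps above resolve this cleanly: placing $\alpha$ at $z_1^x,z_{2k}^x$ decouples the attachments when $c_1$ is outer, while the alternating and ``there-and-back'' maps provide compatible, fully explicit choices when $c_1=\alpha$.
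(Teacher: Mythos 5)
The paper states this as an unproven ``observation'' and offers no argument; your proposal supplies a full explicit construction, which is a valid way to fill that gap. Your construction is correct: setting $f(\alpha')=\alpha$ and $f(w_i)=i$, then splitting on whether $c_1=\alpha$ (and dually for $c_2$) and handling the $x$- and $y$-gadgets independently (they interact only through $b^x y$ and $xb^y$) does produce a homomorphism with $f(x)=c_1,\ f(y)=c_2$ in every case. The case $c_1$ outer correctly sets $f(z_1^x)=f(z_{2k}^x)=\alpha$ to decouple $x$ from the rest of the $z$-cycle; the alternating map handles $c_1=\alpha$, $c_2$ outer; and the ``there-and-back'' map handles $c_1=c_2=\alpha$, where $f(b^x)=2k$ is adjacent both to $f(z_0^x)=0$, $f(z_{2k-1}^x)=2k-1$, and to $f(y)=\alpha$.

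Two small imprecisions worth fixing for a clean write-up. First, the arc lengths are off by one: after fixing $z_0^x,z_1^x,z_{2k-1}^x,z_{2k}^x$, each remaining arc of the $z$-cycle has $2k-2$ edges (not $2k-1$), and since $f(z_1^x)=\alpha$ rather than an outer vertex, the relevant outer sub-walk (from $z_2^x$, which is free among outer vertices, to $z_{2k-1}^x$, which is prescribed) has length $2k-3$, an odd number; the bounce argument works because one endpoint is free. Second, the phrase ``between prescribed outer endpoints'' is not quite right, since one endpoint of each arc is $\alpha$; the correct statement is that after one step you have an outer walk with one free endpoint. Neither issue affects correctness. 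Finally, describing $z_0^xz_1^x$ and $z_{2k-1}^xz_{2k}^x$ as ``extra'' edges is misleading, as they are ordinary edges of the $z$-cycle, but you do account for them.
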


The next two lemmas illustrate the key properties of the freezing gadget which we will apply; first, a definition and a remark. 

\begin{defn}
Given an $H$-colouring $f$ of $G$, a vertex $v\in V(G)$ is said to be \emph{fixed} by $f$ if $f(v) = g(v)$ for every neighbour $g$ of $f$ in $\bHom(G,H)$.
\end{defn}

\begin{rem}
Fixed vertices and frozen vertices are similar, but not quite the same. A vertex $v \in V(G)$ is frozen by $f$ if $g(v) = f(v)$ for every $g$ in the same component of $\bHom(G,H)$ as $f$. Thus, if a vertex $v\in V(G)$ is frozen by $f$, then it is also fixed by $f$, but the converse does not hold. However, $f$ itself is frozen if and only if every vertex of $G$ is fixed by $f$.
\end{rem}

\begin{lem}
\label{alphaFrozen}
If $f$ is a $W_{2k+1}$-colouring of $F_k(x,y)$ with $f(x)=f(y)=\alpha$, then $f$ is frozen. 
\end{lem}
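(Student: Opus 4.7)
The plan is to prove every vertex of $F_k(x,y)$ is fixed by $f$, which by the preceding remark implies $f$ is frozen. I would first pin down the global image of $f$ using Observation~\ref{toalpha}, then analyse the closed walk on $C_{2k+1}$ induced by the $z^x$-cycle to obtain a rigid structural formula, and finally use this rigidity to rule out any single-vertex recolouring at each vertex.

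For the global structure, Observation~\ref{toalpha} forces $f(\alpha') = \alpha$ and permits only $\alpha', x, y$ and the neighbours of $x, y$ to map to $\alpha$. Since $\alpha$ has no loop and $f(x) = f(y) = \alpha$, the neighbours of $x$ and $y$ must map to the outer cycle $\{0,\dots,2k\}$ of $W_{2k+1}$. Every other vertex of $F_k(x,y) \setminus \{x, y, \alpha'\}$ is, by the definition of $\alpha'$, a neighbour of $\alpha'$, and therefore also maps to the outer cycle. In particular, $f$ restricts to the $z^x$-cycle as a closed walk of length $4k-2$ on $C_{2k+1}$, and analogously for the $z^y$-cycle.

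The main technical step is the walk analysis. Writing $a_j := f(z_j^x)$ and $b := f(b^x)$, the adjacency of $z_0^x, z_{2k-1}^x$ to $b^x$, together with the fact that neither can map to $\alpha$, forces $a_0, a_{2k-1} \in \{b-1, b+1\}$. The hard part is then a parity/magnitude argument: a $\pm 1$ walk of length $2k-1$ on $C_{2k+1}$ has net displacement of odd parity and absolute value at most $2k-1 < 2k+1$. This simultaneously rules out $a_0 = a_{2k-1}$ and forces each of the two halves of the $z^x$-cycle to be strictly monotone on $C_{2k+1}$. Up to direction, this yields $a_j \equiv b - 1 - j \pmod{2k+1}$ on one half and an analogous monotone formula on the other; in particular $f(z_1^x) = b - 2$ and $f(z_{2k}^x) = b+2$. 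A direct check then verifies that for $k \ge 2$ the values $b-2$ and $b+2$ share no common outer-cycle neighbour in $W_{2k+1}$, since the pairs $\{b \pm 1, b \pm 3\}$ are all distinct modulo $2k+1$.

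The remaining work is a short common-neighbour check at each vertex. The vertex $x$ is fixed because any replacement of $f(x)=\alpha$ must be a common $W_{2k+1}$-neighbour of $f(z_1^x)$ and $f(z_{2k}^x)$, which is only $\alpha$ itself; symmetrically $y$ is fixed. Each of $b^x, b^y$ is fixed because its neighbours map to $\alpha$ and to $\{b-1, b+1\}$, whose unique common outer-cycle neighbour is $b$. Each interior $z_i^x$ has two cycle-neighbours whose colours differ by $2$ on $C_{2k+1}$ (uniquely determining the outer-cycle midpoint), except at the joints $i \in \{0, 2k-1\}$ where the third neighbour $b^x$ pins down the colour instead. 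Finally, the subgraph $\{w_0, \ldots, w_{2k}, \alpha'\}$ is a copy of $W_{2k+1}$, so its restriction of $f$ is an automorphism (as $W_{2k+1}$ is a core); the $w_i$ therefore cover every outer-cycle colour, $\alpha'$ is fixed because only $\alpha$ is adjacent to every outer-cycle vertex, and each $w_i$ is fixed by the same midpoint argument. The principal obstacle throughout is the walk analysis in the third paragraph; once the structural formula is established, everything else is bookkeeping.
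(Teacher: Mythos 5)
Your proof is correct, and it follows the same overall strategy as the paper: first pin down that all vertices except $x,y,\alpha'$ map to the outer rim, then establish that the colouring of the $z^x$- and $z^y$-structures is rigid (a ``$\pm1$ stepping'' colouring), and finally verify fixedness vertex by vertex via a distinguishing-neighbourhood check. The one genuinely different ingredient is how you establish rigidity. The paper observes that $z_0^y z_1^y\cdots z_{2k-1}^y b^y$ and $z_{2k-1}^y z_{2k}^y\cdots z_{4k-3}^y z_0^y b^y$ are each cycles of length exactly $2k+1$ and invokes the fact that $C_{2k+1}$ admits a unique homomorphism to itself up to automorphism, which determines all colours immediately. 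You instead drop $b^x$ from the cycle and run a direct parity/magnitude argument on the two paths of length $2k-1$ in the $z^x$-cycle: the net displacement is odd and bounded by $2k-1 < 2k+1$, which (combined with the constraint $a_0,a_{2k-1}\in\{b-1,b+1\}$ from adjacency to $b^x$) rules out $a_0=a_{2k-1}$ and forces monotone $\pm1$ walks, reaching the same colouring formula from first principles. Your method is more self-contained; the paper's is shorter because it cites the standard fact about $C_{2k+1}$-cores. Everything else --- the handling of $x,y,b^x,b^y$, the joints $z_0,z_{2k-1}$, the interior midpoint checks, and the $W_{2k+1}$-core argument for $\{w_0,\dots,w_{2k},\alpha'\}$ --- is sound and matches the paper's logic.
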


\begin{proof}
Let $f$ be such a homomorphism. Our goal is to show that every vertex of $F_k(x,y)$ is fixed by $f$. Since $\{w_0,\dots,w_{2k},\alpha'\}$ induces a copy of $W_{2k+1}$ in $F_k(x,y)$ and every homomorphism from $W_{2k+1}$ to itself is frozen, we have that every vertex of $\left\{w_0,\dots,w_{2k},\alpha'\right\}$ is frozen, and thus fixed. Moreover, every homomorphism from $W_{2k+1}$ to itself maps $\alpha$ to itself, and therefore $f(\alpha')=\alpha$. Thus, since we are assuming that $f(x)=f(y)=\alpha$, every vertex of $F_k(x,y)$ apart from $x,y$ and $\alpha'$ has a neighbour of colour $\alpha$, and it therefore mapped into $\{0,\dots,2k\}$. 

Consider the cycle $z_0^yz_1^y\cdots z_{2k-1}^y b^y$ of length $2k+1$. The cycle of length $2k+1$ has only one homomorphism to itself, up to composing with an automorphism. So, without loss of generality, we may assume that 
\[f\left(z_i^y\right)=i\quad \text{for } 0\leq i\leq 2k-1,\]
\[f\left(b^y\right)=2k.\]

Now, consider the cycle $z_{2k-1}^y z_{2k}^y\cdots z_{4k-3}^yz_0^y b^y$, which is also of length $2k+1$. We already know that $f(z_{2k-1}^y)=2k-1$, that $f(b^y)=2k$ and that $f(z_0^y)=0$. Applying uniqueness of the homomorphism from $C_{2k+1}$ to itself once again, we see that this implies 
\[f\left(z_{2k+i}^y\right)=2k-i-2\quad \text{for }0\leq i\leq 2k-3.\]
Thus, every vertex $u\in \left\{z_0^y,\dots,z_{4k-3}^y,b^y\right\}$ has neighbours of colours $\alpha$, $f(u)-1$ and $f(u)+1$. Therefore, every such vertex is fixed by $f$. Also, $y$ is joined to $z_1^y$ and $z_{2k}^y$ which are mapped by $f$ to colours $1$ and $2k-2$, respectively. The only vertex of $W_{2k+1}$ joined to these two colours is $\alpha$, and so $y$ is fixed by $f$. By symmetry, the vertices $z_0^x,\dots,z_{4k-3}^x,b^x$ and $x$ are also fixed and so $f$ is indeed frozen. 
\end{proof}

Lemma~\ref{notAlphaAlpha} below shows the $F_k(x,y)$ gadget admits reconfiguration sequences which are required in later proofs.  In order to prove the lemma, we use the concept of mixing.  A graph $G$ is \emph{$H$-mixing} if for any two $H$-colourings $f$ and $g$ of $G$, there is a reconfiguration sequence taking $f$ to $g$; that is, $\bHom(G,H)$ is connected.  The following two propositions are used in the proof; see~\cite[Propositions~3.12 and~7.12]{BrewsterNoel} for proofs. 

\begin{prop}
\label{treeMix}
Every tree $T$ is $C_{2k+1}$-mixing.
\end{prop}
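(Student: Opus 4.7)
The plan is to induct on $|V(T)|$. The base cases $|V(T)|=1$ (no edges, so every pair of colourings is already adjacent in $\bHom(T,C_{2k+1})$) and $|V(T)|=2$ are handled directly. For $|V(T)|=2$, a $C_{2k+1}$-colouring corresponds to an oriented edge of $C_{2k+1}$, and the allowed single-vertex moves reflect one endpoint across the other, i.e., cyclically shift it by $\pm 2$. Since $\gcd(2,2k+1)=1$, iterating these moves traverses every oriented edge.

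For the inductive step with $|V(T)|\geq 3$, I would choose a leaf $\ell$ whose unique neighbour $v$ has $\deg_T(v)\geq 2$ (such a leaf exists because $T$ has at least three vertices), and set $T':=T-\ell$. Given $f,g\colon T\to C_{2k+1}$, the inductive hypothesis applied to $T'$ yields a single-vertex reconfiguration sequence $f|_{T'}=f'_0,f'_1,\dots,f'_m=g|_{T'}$. I would extend this to a reconfiguration sequence of $T$ as follows. A step that recolours a vertex $w\neq v$ copies directly to $T$, keeping $\ell$'s colour fixed. A step that recolours $v$ from $c$ to $c'$ must satisfy that $c'$ is adjacent to the colour of every $T'$-neighbour of $v$; combined with the fact that $c$ was already adjacent to those colours, and that $C_{2k+1}$ is a cycle, this forces $c'\in\{c-2,c+2\}$ and pins every $T'$-neighbour of $v$ to the midpoint colour $m:=(c+c')/2\bmod(2k+1)$. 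To extend the move to $T$, I need $\ell$ also to be coloured $m$ at that moment, and if it is not, then a single preliminary single-vertex move at $\ell$, valid because $\ell$ is a leaf and $m$ is adjacent to $v$'s current colour $c$, accomplishes this. After processing all $m$ steps, $v$'s colour matches $g(v)$ and $\ell$'s colour is some neighbour of $g(v)$; one final $\ell$-move, if needed, lands on $g(\ell)$.

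The main obstacle I expect is keeping the bookkeeping around the $v$-recolouring steps coherent, i.e., checking that the ``midpoint rigidity'' argument is watertight and that the interleaved $\ell$-moves never break the validity of subsequent steps. The key global reason this approach works is the odd-cycle hypothesis: $\gcd(2,2k+1)=1$ provides the flexibility needed in the $|V(T)|=2$ base case, and the uniqueness of the common neighbour of $c$ and $c\pm 2$ in $C_{2k+1}$ is what makes the ``midpoint'' well-defined and forces the inductive step through. The assumption $\deg_T(v)\geq 2$ is essential, since without a surviving $T'$-neighbour of $v$ the rigidity argument collapses and $v$'s colour could jump arbitrarily in $T'$, which is exactly the phenomenon that has to be absorbed by the snake walk in the $|V(T)|=2$ base.
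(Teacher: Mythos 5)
Your proof is correct. The paper does not give a proof of this proposition at all — it simply cites Propositions 3.12 and 7.12 of the reference \cite{BrewsterNoel}, where the result is obtained as part of a more general framework for mixing of (circular) colourings. Your leaf-peeling induction is a self-contained elementary route to the same conclusion, and the key steps all check out: every leaf of a tree on at least three vertices has a neighbour of degree at least two, so $v$ retains a $T'$-neighbour; whenever $v$ is recoloured from $c$ to $c'\neq c$ in $T'$, the existence of a surviving $T'$-neighbour forces $c'\in\{c-2,c+2\}$ and forces all such neighbours to sit at the unique common neighbour $m$ of $c$ and $c'$ in $C_{2k+1}$ (uniqueness because $|N(c)\cap N(c')|=1$ for any two vertices at distance two in an odd cycle of length at least three); the preliminary $\ell$-move to $m$ is valid because $m\in N(c)$; and the $|V(T)|=2$ base case reduces, as you say, to observing that the single-vertex moves generate a Hamiltonian cycle through all $4k+2$ oriented edges of $C_{2k+1}$, which follows from $\gcd(2,2k+1)=1$. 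The one phrase I would tighten is the parenthetical ``such a leaf exists because $T$ has at least three vertices'': in fact in a tree on at least three vertices \emph{every} leaf has a neighbour of degree at least two, which is worth noting since it removes any selection issue.
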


\begin{prop}
\label{cycleMix}
For $r\geq3$, if $r\leq 2k$, then $C_{2r}$ is $C_{2k+1}$-mixing. 
\end{prop}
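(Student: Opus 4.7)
The plan is to analyse homomorphisms $C_{2r} \to C_{2k+1}$ through the universal cover $\pi: \mathbb{Z} \to C_{2k+1}$ (the two-way infinite path). The hypothesis $r \leq 2k$ will be used to show that every such homomorphism has winding number zero; I then flatten each lifted homomorphism to a canonical alternating form via valid reconfiguration moves, and finally show that all alternating forms lie in the same component of $\bHom(C_{2r},C_{2k+1})$.

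For the winding-number step, a homomorphism $f: C_{2r} \to C_{2k+1}$ lifts to $\tilde f: \mathbb{Z} \to \mathbb{Z}$ (using $\mathbb{Z}$ as the universal cover of $C_{2r}$ as well), satisfying $\tilde f(x+2r) = \tilde f(x) + w(2k+1)$ for some integer winding number $w$. Since $\tilde f$ shifts by exactly $\pm 1$ per step, $|w(2k+1)| \leq 2r$; together with $r \leq 2k$ this yields $|w| < 2$. Parity in $\mathbb{Z}$ flips along each edge, and after $2r$ (even) steps parity is restored, so $w(2k+1)$ is even; since $2k+1$ is odd, $w$ itself is even. Therefore $w=0$ and $\tilde f$ descends to a well-defined map $C_{2r} \to \mathbb{Z}$, whose image, by $1$-Lipschitzness and $\mathrm{diam}(C_{2r}) = r \leq 2k$, lies in a block of at most $r+1 \leq 2k+1$ consecutive integers. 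On such a block $\pi$ is injective and adjacency-preserving, so moves in $\mathbb{Z}$ project to valid moves in $C_{2k+1}$.

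To reduce $\tilde f$ to an alternating form, I iterate the following flattening move: while $\max \tilde f - \min \tilde f \geq 2$, pick any vertex $v$ with $\tilde f(v) = \max \tilde f$. Both neighbours of $v$ necessarily have lift value $\max \tilde f - 1$, and flipping $\tilde f(v)$ to $\max \tilde f - 2$ (which remains $\geq \min \tilde f$) projects to a valid single-vertex recolouring of $f$. Each flip leaves $\min \tilde f$ unchanged and strictly decreases the number of vertices attaining the global maximum; once that count reaches zero the maximum itself drops. Iteration terminates in a state with range $1$, whose image is $\{a, a+1\}$ for some $a$, corresponding to an alternating $C_{2k+1}$-colouring supported on the edge $\pi(a)\pi(a+1)$.

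Finally I connect the $2(2k+1)$ alternating colourings. From the alternating pattern with even-indexed vertices coloured $a$ and odd-indexed vertices coloured $a+1$, I recolour the even-indexed vertices one at a time from $a$ to $a+2$; each such single-vertex change is valid because both neighbours of the recoloured vertex are still coloured $a+1$, which is adjacent to $a+2$ in $C_{2k+1}$. The result is the alternating pattern on the edge $\{a+1, a+2\}$; iterating cycles through all $2(2k+1)$ directed alternating colourings, completing the proof. The main subtlety is in the flattening step: flipping at the \emph{global} maximum (rather than any strict local maximum) is what guarantees the range never grows and the procedure terminates inside the well-behaved block; the winding-number computation and the explicit shifting argument are otherwise routine once the lift formalism is fixed.
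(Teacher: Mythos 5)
The paper does not prove Proposition~\ref{cycleMix} itself; it cites Propositions~3.12 and~7.12 of Brewster and Noel for a proof, so there is no in-text argument to compare against line by line. Your self-contained proof via the universal cover is correct. The three key steps all check out: the winding number $w$ satisfies $|w(2k+1)|\leq 2r\leq 4k<2(2k+1)$, hence $|w|\leq 1$, and parity of the step sequence (a sum of $2r$ terms each $\pm1$ is even, while $2k+1$ is odd) forces $w$ even and hence $w=0$; the ``flatten at the global maximum'' procedure is well-founded because each flip strictly decreases the lexicographic potential $(\max\tilde f-\min\tilde f,\ \#\{v:\tilde f(v)=\max\})$ while never decreasing $\min\tilde f$, and each flip projects under $\pi$ to a valid single-vertex recolouring since $\pi$ is a graph homomorphism; and your cyclic shift between alternating colourings hits all $2(2k+1)$ of them because $\gcd(2,2k+1)=1$. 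This is essentially the standard ``wind''-based approach used in the cited reference, so the route is not substantively different, merely written out in full. One remark: the observation that the image of $\tilde f$ fits in a block of $r+1\leq 2k+1$ consecutive integers on which $\pi$ is injective is not actually needed for the argument as written --- since $\pi$ is a homomorphism on all of $\mathbb{Z}$, every single-vertex move of $\tilde f$ already projects to a valid move of $f$ regardless of the range --- but it does no harm and aids the intuition that the lift is ``the same'' colouring. The hypothesis $r\geq 3$ is not used in your argument (nor does it need to be; the bound $r\leq 2k$ with $k\geq 1$ already forces $r\geq 2$, and the argument works for $r=2$ as well), so you are proving a marginally stronger statement, which is fine.
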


\begin{lem}
\label{notAlphaAlpha}
Let $c_1,c_1',c_2\in V(W_{2k+1})$ such that $(c_1,c_2)\neq (\alpha,\alpha)$ and $(c_1',c_2)\neq (\alpha,\alpha)$. If $f$ and $g$ are $W_{2k+1}$-colourings of $F_k(x,y)$ with $f(u)=g(u)$ for all $u\in \left\{w_0,\dots,w_{2k},\alpha'\right\}$ such that $(f(x),f(y))= (c_1,c_2)$ and $(g(x),g(y))= (c_1',c_2)$, then there exists a reconfiguration sequence $f_0f_1\cdots f_m$  and an index $0\leq j\leq m-1$ such that 
\begin{itemize}
\item $f_0=f$ and $f_m=g$,
\item $(f_i(x),f_i(y))=(c_1,c_2)$ for $0\leq i\leq j$, and
\item $(f_i(x),f_i(y))=(c_1',c_2)$ for $j+1\leq i\leq m$. 
\end{itemize}
\end{lem}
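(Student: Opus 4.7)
The plan is to construct the required reconfiguration sequence in three consecutive phases, using a ``pivot'' colour $v \in V(W_{2k+1})$ chosen to be a common neighbour of $c_1$ and $c_1'$.

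First, as in the proof of Lemma~\ref{alphaFrozen}, the induced subgraph of $F_k(x,y)$ on $\{\alpha', w_0, \ldots, w_{2k}\}$ is a copy of $W_{2k+1}$, and since the $w_i$'s form a non-bipartite cycle $C_{2k+1}$ in the neighbourhood of $\alpha'$, $\alpha'$ must map to $\alpha$ (the neighbourhood of any outer-cycle vertex is bipartite and so cannot host $C_{2k+1}$). The induced homomorphism from this copy of $W_{2k+1}$ to $W_{2k+1}$ then forces the $w_i$'s to cover the outer cycle via an automorphism, and a direct check shows these values are frozen. By hypothesis, $f$ and $g$ agree on $\{\alpha', w_0, \ldots, w_{2k}\}$, so these colours are preserved throughout any reconfiguration sequence; moreover, every neighbour of $\alpha'$ is permanently restricted to the outer cycle $\{0, \ldots, 2k\}$.

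Next I would choose the pivot $v$: when $c_1, c_1' \neq \alpha$ take $v = \alpha$; when exactly one of $c_1, c_1'$ equals $\alpha$ (which forces $c_2 \neq \alpha$ by hypothesis) take $v$ to be one of the two outer-cycle neighbours of the other; when $c_1 = c_1'$, the swap step below is vacuous so no pivot is needed. The three phases of the sequence are: (i) starting from $f$ and keeping $(x, y) = (c_1, c_2)$, reconfigure to an intermediate colouring $f^*$ with $f^*(z_1^x) = f^*(z_{2k}^x) = f^*(b^y) = v$; (ii) swap $x$'s colour from $c_1$ to $c_1'$ in a single step, which is legal because $v$ is adjacent to both $c_1$ and $c_1'$ and all neighbours of $x$ are coloured $v$; (iii) from the post-swap colouring, reconfigure to $g$ while keeping $(x, y) = (c_1', c_2)$. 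The splitting index $j$ is taken to be the last index of phase (i).

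The main obstacle is establishing that phases (i) and (iii) are feasible. After deleting the frozen vertices $\{x, y, \alpha', w_0, \ldots, w_{2k}\}$, the remaining graph is a disjoint union of two ``theta'' subgraphs $F'_x$ and $F'_y$, each consisting of the $(4k-2)$-cycle of $z$-vertices together with a length-two chord through $b^x$ or $b^y$ (so that each theta contains two odd $(2k+1)$-cycles and one $(4k-2)$-cycle). The two sides are independent, so I would treat each separately, showing that the constrained reconfiguration graph is connected and in particular reaches the desired target values. The main ingredients are Proposition~\ref{cycleMix}, applied to the $(4k-2)$-cycle (which is $C_{2k+1}$-mixing since $k \geq 2$), and Proposition~\ref{treeMix}, applied to the path obtained by pinning the chord vertex $b^x$ or $b^y$, together with the extra flexibility from the fact that $z_1^x, z_{2k}^x, b^x, b^y$ may take the colour $\alpha$ (so one can ``route through $\alpha$'' when cycle-rearrangements alone are too rigid). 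The trickiest subcase is $c_2 = \alpha$, in which $b^x$ cannot equal $\alpha$ and the $y$-side loses flexibility; however, $c_1, c_1' \neq \alpha$ holds in that subcase by hypothesis, so the $x$-side still has enough room to complete the pivot.
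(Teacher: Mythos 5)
Your overall strategy (establish that the copy of $W_{2k+1}$ freezes $\alpha'$ and the $w_i$'s, analyse the remaining theta graphs, invoke Propositions~\ref{treeMix} and~\ref{cycleMix}, drive the neighbourhood of $x$ to a safe state, then swap $x$'s colour) is close in spirit to the paper's two-case argument. But there is a genuine gap in phase~(ii): when exactly one of $c_1,c_1'$ equals $\alpha$, your pivot $v$ is forced to be an outer-cycle colour, and you then require $z_1^x$ and $z_{2k}^x$ to both receive colour $v$. No $W_{2k+1}$-colouring of $F_k(x,y)$ does this. The arc $z_1^x z_2^x \cdots z_{2k}^x$ has odd length $2k-1$, and its internal vertices $z_2^x,\dots,z_{2k-1}^x$ are all adjacent to $\alpha'$, hence forced into $\{0,\dots,2k\}$. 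If $z_1^x = z_{2k}^x = v \in \{0,\dots,2k\}$, then $z_2^x$ and $z_{2k-1}^x$ lie in $\{v-1,v+1\}$ and the internal path of length $2k-3$ must connect them inside $C_{2k+1}$; but every odd-length walk in $C_{2k+1}$ between two vertices at even distance (or from a vertex to itself) has length at least $2k-1$, a contradiction. So the target colouring of phase~(i) does not exist and the pivot step can never be taken.

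The fix — and what the paper actually does — is to drop the insistence on a \emph{single} pivot colour: the only requirement for the swap at $x$ is that each of $z_1^x,\ z_{2k}^x,\ b^y$ be coloured with a common neighbour of $c_1$ and $c_1'$, and these three colours may differ. When $c_1=\alpha$ and $c_1'\neq\alpha$, the common neighbours are exactly $c_1'-1$ and $c_1'+1$, and the paper reconfigures to the colouring $z_i^x \mapsto c_1'+i$ (for $1\le i\le 2k$), $b^y \mapsto c_1'+1$, which gives $z_1^x = c_1'+1$, $z_{2k}^x = c_1'-1$, and $b^y=c_1'+1$. This is achievable because it is precisely the ``shifted $C_{2k+1}$'' pattern that the $z^x$ theta graph naturally supports, and it still permits the single swap $c_1=\alpha\to c_1'$ at $x$. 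Your case $c_1,c_1'\neq\alpha$ (pivot $v=\alpha$) is fine and matches the paper; the case $c_1=c_1'$ is also fine. Only the mixed $\alpha$ case needs to abandon the uniform-pivot idea.
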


\begin{proof}
First, suppose that $c_1=c_1'$. In this case, we need to show that we can reconfigure $f$ to $g$ without changing the colour of $x$ or the colour of $y$. As a first step, if $c_1\neq \alpha$, then we change the colours of the neighbours of $x$ to $\alpha$ one by one (note that by Observation~\ref{toalpha}, if $c_1\neq \alpha$, then no such vertex has a neighbour of colour $\alpha$); otherwise, we leave the colours of the neighbours of $x$ unchanged.  Similarly, if $c_2\neq \alpha$, then we change the colours of the neighbours of $y$ to $\alpha$ one by one. Let $h$ be the resulting colouring.

Now, let $F'$ be the subgraph of $F_k(x,y)$ induced by the set of vertices $u\in V(F_k(x,y))\setminus\left\{x,y,w_0,\dots,w_{2k}\right\}$ such that $h(u)\neq\alpha$. By definition of $h$ and the fact $h$ and $g$ agree on $x$ and $y$, we see that each vertex of $F'$ is also mapped into $\{0,\dots,2k\}$ by $g$. By assumption, at least one of $c_1$ or $c_2$ must be different from $\alpha$. If exactly one of $c_1$ or $c_2$ is equal to $\alpha$, then $F'$ is the disjoint union of a cycle of length $4k-2$ and a path of length $4k-3$ and if, neither $c_1$ nor $c_2$ is equal to $\alpha$, then $F'$ is the disjoint union of four paths of length $2k-2$. In either case, by Propositions~\ref{treeMix} and~\ref{cycleMix}, every component of $F'$ is $C_{2k+1}$-mixing. Thus, there is a reconfiguration sequence taking $h$ to a colouring $h'$ such that the colour of each vertex outside of $V(F')$ remains constant throughout this sequence and $h'$ and $g$ agree on the vertices of $F'$. As a final step, we change the colour of each vertex which is a neighbour of $x$ or $y$, one by one, to match its colour under $g$. This is possible since $h'$ and $g$ agree on the neighbourhood of every such vertex. This completes the proof in the case that $c_1=c_1'$. 

Now, suppose that $c_1\neq c_1'$. In this case, all that we need to do is to show that $f$ can be reconfigured to a $W_{2k+1}$-colouring $f'$ such that $f'(x)=c_1'$ and $f'(y)=c_2$ without changing the colour of $y$ during the sequence. If this is true, then we can simply apply the result of the previous case to reconfigure $f'$ to $g$ and we will be done. 

Consider first the case that $c_1,c_1'\neq \alpha$. In this case, no neighbour of $x$ is adjacent to a vertex which is mapped to $\alpha$ by $f$. Thus, we can change the colours of the neighbours of $x$ to $\alpha$, one by one, and then change the colour of $x$ from $c_1$ to $c_1'$. Letting $f'$ be the resulting colouring, we see that we are done in this case. 

Next, suppose that $c_1=\alpha$ and $c_1'\neq \alpha$. By hypothesis, this implies that $c_2\neq\alpha$. We begin by changing the colours of the neighbours of $y$ to $\alpha$ one by one and letting $h$ be the resulting $W_{2k+1}$-colouring. Let $F'$ be the subgraph of $F_{k}(x,y)$ induced by the set of vertices $u\in V(F_k(x,y))\setminus\left\{y,w_0,\dots,w_{2k}\right\}$ such that $h(u)\neq\alpha$. Then the components of $F'$ are the cycle $z_0^x\cdots z_{4k-3}^x$ and the path $z^y_{2k+1}\cdots z_{4k-3}^yz_0^y b^y z_{2k-1}^y\cdots z_2^y$. By Proposition~\ref{cycleMix}, by only changing colours of vertices on $F'$, we can reconfigure $h$ to a $W_{2k+1}$-colouring $h'$ such that 
\[h'(z_0^x) = c_1'+2,\]
\[h'\left(z_i^x\right) = c_1'+i\quad \text{for }1\leq i\leq 2k,\]
\[h'\left(z_{2k+i}^x\right) = c_1'+2k-i\quad \text{for }1\leq i\leq 2k-3,\]
\[h'(b^y)=c_1'+1.\]
Now, we are simply done by changing the colour of $x$ from $c_1=\alpha$ to $c_1'$. 

Finally, suppose that $c_1\neq \alpha$ and $c_1'=\alpha$. By reversing the roles of $f$ and $g$, and applying the previous case,
we are done.  (Equivalently, $\bHom(F_k(x,y),W_{2k+1})$ is an undirected graph and we can follow the path from the previous case in the opposite direction.)
\end{proof}

We are now in position to prove Lemma~\ref{colouredtoW}.

\begin{proof}[Proof of Lemma~\ref{colouredtoW}]
Let $G=(G_1,G_2)$ be an edge-coloured graph and let $f$ and $g$ be two $(W_{2k+1},Z_{2k+1})$-colourings of $G$. Let $G'$ be the graph obtained from $G_1$ by adding the gadget $F_k(x,y)$ for every $xy\in E(G_2)$ disjointly from $G_1$ except at $x$ and $y$. Using Observation~\ref{anyPair}, we can let $f'$ and $g'$ be any $W_{2k+1}$-colourings of $G'$ obtained by extending $f$ and $g$ to the vertices of $V(G')\setminus V(G)$, respectively. Clearly, this construction can be completed in polynomial time and $V(G')=O\left(|V(G)|+|E(G_2)|\right)$. We claim that $f$ reconfigures to $g$ as $(W_{2k+1},Z_{2k+1})$-colourings of $G$ if and only if $f'$ reconfigures to $g'$ as $W_{2k+1}$-colourings of $G'$. 

First, suppose that $f'$ reconfigures to $g'$ and let $f_0',\dots,f_m'$ be any reconfiguration sequence taking $f'$ to $g'$. For $0\leq i\leq m$, let $f_i$ be the restriction of $f_i'$ to $V(G)$. Clearly $f_0=f$ and $f_m=g$ and so we are done if $f_i$ is a $(W_{2k+1},Z_{2k+1})$-colouring of $G$ for $1\leq i\leq m$. Since $G_1$ is a subgraph of $G'$, we have that $f_i$ is a $W_{2k+1}$-colouring of $G_1$ and so the only way in which it could fail to be a $(W_{2k+1},Z_{2k+1})$-colouring is if there exists $xy\in E(G_2)$ such that $f_i(x)=f_i(y)=\alpha$. For any such pair, the gadget $F_k(x,y)$ is present in $G'$. Thus, by Lemma~\ref{alphaFrozen}, the vertices $x$ and $y$ are frozen by $f_i'$. In particular, this implies that $f(x)=f_0'(x)=\alpha$ and $f(y)=f_0'(y)=\alpha$, which contradicts the assumption that $f$ is a $(W_{2k+1},Z_{2k+1})$-colouring of $G$. This completes the proof of this direction.

Now, we suppose that $f$ reconfigures to $g$ and show that $f'$ reconfigures to $g'$. We may assume that $f$ and $g$ differ on only one vertex, say $x\in V(G)$, from which the general case will follow by induction on the length of the reconfiguration sequence taking $f$ to $g$. 

As a first step, let us show that $f'$ reconfigures to a $W_{2k+1}$-colouring $h'$ of $G'$ such that $h'(u)=g(u)$ for every $u\in V(G)$ (including $x$). Given a vertex $y$ such that $xy\in E(G_2)$, let $c_1:=f(x)$, $c_1':=g(x)$ and $c_2:=f(y)$ (note that $g(y)=c_2$ since we are assuming that $f$ and $g$ differ only on $x$). Since $f$ and $g$ are $(W_{2k+1},Z_{2k+1})$-colourings of $G$, we have that $(c_1,c_2), (c_1',c_2)\neq (\alpha,\alpha)$. Thus, by Lemma~\ref{notAlphaAlpha}, there is a reconfiguration sequence of $(W_{2k+1},Z_{2k+1})$-colourings of $F_k(x,y)$ taking the restriction of $f'$ to $F_k(x,y)$ to a colouring which maps $x$ to $c_1'$ and $y$ to $c_2$ such that the colour of $y$ does not change during this sequence and the colour of $x$ changes exactly once. For every $y$ such that $xy\in E(G_2)$, one at a time, we perform the ``first half'' of this reconfiguration sequence, stopping before the colour of $x$ changes. After this, we can safely change the colour of $x$ from $f(x)$ to $g(x)$, thereby obtaining the desired $W_{2k+1}$-colouring $h'$ of $G'$. 

Now, we have that $h'$ and $g'$ agree on all vertices of $G$, but that they may differ on some of the the freezing gadgets. However, since $g'$ is a $(W_{2k+1},Z_{2k+1})$-colouring (i.e. it does not map both sides of any edge of $G_2$ to $\alpha$), we can apply Lemma~\ref{notAlphaAlpha} (in the case $c_1=c_1'$) to each freezing gadget one by to make the colourings of these gadgets match their colourings under $g'$ without changing the colour of any vertex of $V(G)$. Thus, $f'$ reconfigures to $h'$ which reconfigures to $g'$ and the proof is complete. 
\end{proof}

\subsection{Proof of Lemma~\ref{Ktocoloured}}

Finally, we prove Lemma~\ref{Ktocoloured}, which completes the proof of Theorem~\ref{wheelThm}.

\begin{proof}[Proof of Lemma~\ref{Ktocoloured}]
Let $G$ be a graph and let $\varphi$ and $\psi$ be homomorphisms from $G$ to $K_{2k+1}$. We construct an instance of \Hrec{(W_{2k+1},Z_{2k+1})} in five steps, as follows:

\begin{step}
Let $G^*$ be the graph obtained from $G$ by subdividing each edge of $G$ exactly $2k-2$ times. Vertices of $V(G)$ are called \emph{original} vertices and vertices of $V(G^*)\setminus V(G)$ are called \emph{subdivision} vertices.  (That is, replace each edge with a path of length $2k-1$.)
\end{step}

\begin{step}
\label{lock}
For each original vertex $v$, add $2k$ new vertices, say $\ell_1^v,\dots,\ell_{2k}^v$ such that $v\ell_1^v\cdots \ell_{2k}^v$ forms a cycle. These new vertices are called \emph{locking} vertices. Let $\tilde{G}$ be the resulting graph. 
\end{step}

\begin{step}
\label{Wstep}
Add a set $W$ of $2k+2$ vertices disjoint from $V(G^*)$ such that $W$ induces a copy of $W_{2k+1}$ and every vertex which is not adjacent to an original vertex in $\tilde{G}$ is joined to the vertex of $W$ corresponding to vertex $\alpha$. Let $G_1'$ be the resulting graph.
\end{step}

\begin{step}
Extend $\varphi$ and $\psi$, respectively, to the subdivision vertices, the locking vertices, and $W$ to obtain $\varphi'$ and $\psi'$ mapping $G'_1$ to $W_{2k+1}$.  We may assume $\varphi'$ and $\psi'$ agree on $W$ and do not map any vertex of $V(G'_1)\setminus W$ to $\alpha$. The existence of these extensions follows from the fact that, for any two distinct vertices of $C_{2k+1}$, there is a homomorphism from $P_{2k}$ to $C_{2k+1}$ mapping the endpoints to these vertices.
\end{step}

\begin{step}
\label{freezingStep}
Let $G_2'$ be the graph with vertex set $V(G_1')$ where $xy$ is an edge of $G_2'$ if there exists distinct original vertices $u,v$ such that $xu\in E(G^*)$ and $y\in\left\{\ell_1^v,\ell_{2k}^v\right\}$.
\end{step}

It is clear that these steps can be performed in polynomial time and that the resulting edge-coloured graph $G'=(G_1',G_2')$ satisfies $|V(G')|=O\left(|V(G)|+|E(G)|\right)$. Also, since $\varphi'$ and $\psi'$ do not use colour $\alpha$ on $V(G')\setminus W$, it is clear that they are $(W_{2k+1},Z_{2k+1})$-colourings of $G'$. Our goal is to show that $\varphi$ reconfigures to $\psi$ (as $K_{2k+1}$-colourings) if and only if $\varphi'$ reconfigures to $\psi'$ (as $(W_{2k+1},Z_{2k+1})$-colourings). 

First suppose that $\varphi'$ reconfigures to $\psi'$ and let $f_0',\dots,f_m'$ be any reconfiguration sequence taking $\varphi'$ to $\psi'$. For $0\leq i\leq m$, let $f_i$ be the restriction of $f_i'$ to $V(G)$. Clearly, we have $f_0=\varphi$ and $f_m=\psi$ and that $f_i$ differs from $f_{i+1}$ on at most one vertex for $0\leq i\leq m-1$. Since every original vertex is joined to a vertex of $W$ of colour $\alpha$, we have  that  $f_i(v)\in \{0,\dots,2k\}$ for all $v\in V(G)$ and $0\leq i\leq m$. Of course, if each of $f_0,\dots,f_m$ is a $K_{2k+1}$-colouring, then we have found a reconfiguration sequence taking $\varphi$ to $\psi$ and we are done. The following claim allows us to handle the possibility that some of the $f_i$ map adjacent vertices of $G$ to the same colour. 

\begin{claim}
\label{restrictProper}
For $1\leq i\leq m-1$, if $f_i$ is not a $K_{2k+1}$-colouring, then there exists a vertex $v\in V(G)$ such that $f_{i-1}$ and $f_{i+1}$ agree on $V(G)\setminus\{v\}$. 
\end{claim}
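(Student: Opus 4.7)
The plan is to prove the contrapositive: assuming $f_{i-1}'$ and $f_{i+1}'$ disagree on two distinct vertices of $V(G)$---the only nontrivial case, since each reconfiguration step flips at most one vertex---I will show $f_i$ is a proper $K_{2k+1}$-colouring. Call these two vertices $u_1,u_2$, with $u_1$ changing colour between $f_{i-1}'$ and $f_i'$ and $u_2$ between $f_i'$ and $f_{i+1}'$. As a preliminary I record that, by Step~\ref{Wstep}, every original vertex, every interior subdivision vertex, and every interior locking vertex is joined in $G_1'$ to the $\alpha$-vertex of $W$, so each of these is mapped into $\{0,\dots,2k\}$ by $f_i'$.

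The first step is a \emph{locking cycle} analysis. For $u_j$ ($j\in\{1,2\}$) to change colour with the rest of $f_i'$ fixed, both the old and new colours of $u_j$ must be adjacent in $W_{2k+1}$ to $f_i'(w)$ for every $w\in N_{G_1'}(u_j)$. Suppose the cycle $u_j\ell_1^{u_j}\cdots\ell_{2k}^{u_j}$ mapped entirely into $\{0,\dots,2k\}$ under $f_i'$; then as a closed walk of length $2k+1$ in $C_{2k+1}$ it would traverse $C_{2k+1}$ exactly once, so $f_i'(\ell_1^{u_j})$ and $f_i'(\ell_{2k}^{u_j})$ are the two distinct neighbours of $f_i'(u_j)$ in $C_{2k+1}$. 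Their common neighbourhood in $W_{2k+1}$ is $\{f_i'(u_j),\alpha\}$ (using $k\geq 2$), leaving no valid new colour for $u_j$. Hence some vertex of the locking cycle must be sent to $\alpha$, and by the preliminary it can only be $\ell_1^{u_j}$ or $\ell_{2k}^{u_j}$; call it $\ell^*_{u_j}$.

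The second step is to propagate these two $\alpha$-values using the $G_2'$-edges from Step~\ref{freezingStep}. Let $s$ be any first subdivision vertex, adjacent in $G^*$ to some original $u$. Since $u_1\neq u_2$, there is $j\in\{1,2\}$ with $u_j\neq u$, whence $s\ell^*_{u_j}\in E(G_2')$. Because $f_i'(\ell^*_{u_j})=\alpha$ and $\alpha$ has no loop in $Z_{2k+1}$, this forces $f_i'(s)\neq\alpha$. Combined with the preliminary, this proves $f_i'(V(G^*))\subseteq\{0,\dots,2k\}$, so $f_i'|_{G^*}$ is a homomorphism from $G^*$ into the outer cycle $C_{2k+1}$ of $W_{2k+1}$.

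The argument concludes with a parity count. For any $xy\in E(G)$, the subdivided path from $x$ to $y$ in $G^*$ has length $2k-1$, so $f_i(x)$ and $f_i(y)$ are joined by a walk of length $2k-1$ in $C_{2k+1}$. Any closed walk in $C_{2k+1}$ of length less than $2k+1$ uses equally many forward and backward steps and so has even length; since $2k-1$ is odd, the walk cannot be closed, giving $f_i(x)\neq f_i(y)$. Thus $f_i$ is a proper $K_{2k+1}$-colouring, contradicting the hypothesis. I expect the main obstacle to be the locking cycle analysis in the first step---this is where the structural role of Step~\ref{lock} really comes in---after which the $G_2'$-propagation and the parity count are essentially routine.
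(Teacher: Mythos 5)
Your proof is correct, but it takes a genuinely different route from the paper's. The paper argues directly: starting from a non-proper edge $uv$ with $f_i(u)=f_i(v)$, it observes (by the same parity fact you use at the end, namely that no homomorphism $P_{2k}\to C_{2k+1}$ identifies the endpoints) that some first subdivision vertex $x$ adjacent to, say, $v$ must be sent to $\alpha$; it then uses the $G_2'$-edges from $x$ to $\{\ell_1^w,\ell_{2k}^w\}$ for every original $w\neq v$ to force the whole locking cycle of each such $w$ into $C_{2k+1}$, so that $w$ has neighbours coloured $\alpha$, $f_i(w)-1$, $f_i(w)+1$ and is therefore fixed by $f_i'$; since $f_{i-1}'$ and $f_{i+1}'$ are both $\bHom$-neighbours of $f_i'$, they agree with $f_i'$, hence with each other, on $V(G)\setminus\{v\}$. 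You instead argue the contrapositive: the two distinct recoloured original vertices $u_1,u_2$ each force one of $\ell_1^{u_j},\ell_{2k}^{u_j}$ to colour $\alpha$, and then the $G_2'$-edges in the opposite direction (from locking vertices to subdivision vertices) force every subdivision vertex away from $\alpha$, so that $f_i'$ maps $G^*$ into $C_{2k+1}$ and the parity argument yields that $f_i$ is proper. In effect you traverse the $G_2'$-incidence structure in the opposite direction, and you convert the paper's ``fixed vertex'' conclusion into a ``proper colouring'' conclusion. One small advantage of the paper's direct version is that it does not need the assumption that each reconfiguration step flips only one vertex of $V(G')$ (it only uses that $f_{i-1}'$ and $f_{i+1}'$ are $\bHom$-neighbours of $f_i'$); your contrapositive setup is phrased in terms of that lazy-sequence assumption, which is harmless here since the paper takes such a sequence anyway, but is worth noting. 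Also, the last clause ``contradicting the hypothesis'' is a slight slip: once you have set up the contrapositive, concluding that $f_i$ is a proper $K_{2k+1}$-colouring is itself the conclusion, not a contradiction.
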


\begin{proof}
Suppose that $f_i$ is not a $K_{2k+1}$-colouring and let $uv\in E(G)$ such that $f_i(u)=f_i(v)$. (By Lemma~\ref{alphaFrozen} $f_i(v) \neq \alpha$.) Observe that there is no homomorphism from $P_{2k}$ to $C_{2k+1}$ mapping the endpoints to the same vertex. Therefore, at least one vertex, say $x$, on the copy of $P_{2k}$ in $G'_1$ obtained from subdividing the edge $uv$ must be mapped to colour $\alpha$ by $f_i$. By Step~\ref{Wstep}, we have that $x$ must be either a neighbour of $u$ or a neighbour of $v$. Without loss of generality, $x$ is a neighbour of $v$. 

Now recall that, for every original vertex $w\neq v$ and $y\in \left\{\ell_1^w,\ell_{2k}^w\right\}$, we have that $xy\in E(G_2')$. Since $x$ maps to $\alpha$ and $f_i'$ is a $(W_{2k+1},Z_{2k+1})$-colouring of $G'$, we must have that $f_i'(y)\in \{0,\dots,2k\}$. Since all of the vertices $\ell_2^w,\dots,\ell_{2k-1}^w$ are adjacent to a vertex of colour $\alpha$, this implies that every vertex of the cycle $w\ell_1^w\cdots \ell_{2k}^w$ is mapped into $\{0,\dots,2k\}$. Since there is a unique homomorphism from $C_{2k+1}$ to itself, up to automorphism, this implies that every $w\in V(G)\setminus\{v\}$ is adjacent to a vertex of colour $\alpha$, a vertex of colour $f_i(w)+1$ and a vertex of colour $f_i(w)-1$. Thus, every such vertex is fixed by $f_i'$, which proves the claim. 
\end{proof}

Thus by the claim, $f_0, \dots, f_{i-1}, f_{i+1}, \dots, f_m$ is a reconfiguration sequence without the improper colouring $f_i$. Let  $g_0,\dots,g_t$ be the subsequence of $f_0,\dots,f_m$ consisting only of the mappings $f_j$ which are $K_{2k+1}$-colourings. By Claim~\ref{restrictProper}, we have that $g_i$ and $g_{i+1}$ differ on at most one vertex for $0\leq i\leq t-1$, and so this sequence certifies that $\varphi$ reconfigures to $\psi$. This completes one direction of the proof. 

Suppose now that there is a reconfiguration sequence $f_0,\dots, f_m$ taking $\varphi$ to $\psi$. Our goal is to show that $\varphi'$ reconfigures to $\psi'$.  We may assume that $m=1$ since the general case will follow easily by induction on $m$. So, we assume that there exists a unique vertex $v\in V(G)$ such that $\varphi(v)\neq \psi(v)$. 

By construction, no vertex of $G' \backslash W$ is mapped to $\alpha$ by $\varphi'$. So, as a first step, we may change the colour of each neighbour of $v$ to $\alpha$, one at a time. Note that this preserves the property of being a $(W_{2k+1},Z_{2k+1})$-colouring. At this point, we can safely change the colour of $v$ from $\varphi(v)$ to $\psi(v)$. Thus, we have reached a $(W_{2k+1},Z_{2k+1})$-colouring, say $\gamma'$, of $G'$ which maps every vertex of $V(G)$ to its colour under $\psi$. 

The final step is to show that $\gamma'$ reconfigures to $\psi'$. By construction, $\gamma'$ maps every neighbour of $v$, including $\ell_1^v$ and $\ell_{2k}^v$, to $\alpha$. So, we can apply Proposition~\ref{treeMix} to reconfigure the colouring of the path $\ell_{2}^v\cdots\ell_{2k-1}^v$ to match its colouring under $\psi'$. After this, we can change the colours of $\ell_1^v$ and $\ell_{2k}^v$ to match their colours under $\psi'$. At this point, none of the vertices of the form $\ell_1^u$ or $\ell_{2k}^u$ for $u\in V(G)$ are mapped to $\alpha$. Thus, by definition of $G_2'$, we can now change the colour of every subdivision vertex $x$ which is in the neighbourhood of some original vertex to $\alpha$, one by one. The set of subdivision vertices which are not currently mapped to $\alpha$ induces a disjoint union of paths and so we can apply Proposition~\ref{treeMix} to change the colouring of these vertices to match its colouring under $\varphi'$. We can now change the colour of every subdivision vertex $x$ which is the neighbour of an original vertex from $\alpha$ to $\psi'(x)$. Now, we have reached a $(W_{2k+1},Z_{2k+1})$-colouring which only possibly differs from $\psi'$ on vertices of the form $\ell_i^u$ for $u\in V(G)$. We complete the reconfiguration sequence by going through the vertices $u\in V(G)$, one by one, changing the colours of $\ell_1^u$ and $\ell_{2k}^u$ to $\alpha$, applying Proposition~\ref{treeMix} to make the colouring of the path $\ell_2^u\cdots \ell_{2k-1}^u$ match its colouring under $\psi'$, and then changing the colours of $\ell_1^u$ and $\ell_{2k}^u$  to match $\psi'$ as well. Thus, $\varphi'$ reconfigures to $\psi'$ and we are done. 
\end{proof}

\section{Frozen Homomorphisms and Constraint Satisfaction}
\label{frozenSection}

\subsection{The Structure of Thermal Graphs and Frozen Homomorphisms}

Our goal in this section is to prove a dichotomy theorem for \Hfrz{H}. We begin by building up a sequence of basic observations regarding the structure of thermal graphs and frozen homomorphisms. We will mostly focus on frozen homomorphisms to a connected graph $F$ (which we think of as being a component of $H$). Given a graph $F$ and $v\in V(F)$, let $N_F(v)$ be the set of neighbours of $v$ (in particular, if $v$ has a loop, then $v\in N_F(v)$). 

\begin{defn}
Given a connected graph $F$ and a set $S\subseteq V(F)$, we say that a vertex $\alpha\in S$ is \emph{redundant} for $S$ if there exists a vertex $\beta\in V(F)\setminus\{\alpha\}$ such that
\[N_F(\alpha)\cap S\subseteq N_F(\beta)\cap S.\]
\end{defn}

\begin{defn}
Given a connected graph $F$, let $S_F$ be a (possibly empty) subset of $V(F)$ such that no vertex in $S_F$ is redundant for $S_F$ and, subject to this, $S_F$ is maximal under subset inclusion. 
\end{defn}

The following lemma implies that $S_F$ is, in fact, unique.

\begin{lem}
\label{SFunique}
If $F$ is a connected graph and $T\subseteq V(F)$ is a set such that no element of $T$ is redundant for $T$, then $T\subseteq S_F$. 
\end{lem}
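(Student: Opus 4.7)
The plan is to show that the union $T\cup S_F$ itself satisfies the non-redundancy condition in the definition of $S_F$; the conclusion then follows immediately from the maximality of $S_F$.

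The key observation is a monotonicity property: if $\alpha\in S\subseteq S'\subseteq V(F)$ and $\alpha$ is not redundant for $S$, then $\alpha$ is not redundant for $S'$. Indeed, non-redundancy of $\alpha$ for $S$ says that for every $\beta\in V(F)\setminus\{\alpha\}$ there is a witness $\gamma\in N_F(\alpha)\cap S$ with $\gamma\notin N_F(\beta)$. Since $S\subseteq S'$, this same $\gamma$ lies in $N_F(\alpha)\cap S'$, so it continues to witness that $N_F(\alpha)\cap S'\not\subseteq N_F(\beta)\cap S'$. In short, enlarging the ambient set can only make it easier for a vertex to avoid being redundant.

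Apply this monotonicity with $S'=T\cup S_F$. Every $\alpha\in S_F$ is non-redundant for $S_F$, hence non-redundant for $T\cup S_F$; every $\alpha\in T$ is non-redundant for $T$, hence non-redundant for $T\cup S_F$. Therefore no element of $T\cup S_F$ is redundant for $T\cup S_F$. Since $T\cup S_F\supseteq S_F$ and $S_F$ is maximal under subset inclusion among sets with the non-redundancy property, we must have $T\cup S_F=S_F$, i.e.\ $T\subseteq S_F$, as required.

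The argument is essentially a one-liner once the monotonicity observation is made, so the only potential obstacle is convincing oneself that the definitions really do behave monotonically; but since the ``witness'' $\gamma$ used to rule out a candidate $\beta$ remains available in any superset, there is nothing to check beyond the set inclusion above. An immediate consequence, applying the lemma both to $T=S_F'$ and $T=S_F$ for any two maximal sets $S_F$, $S_F'$, is that $S_F$ is unique, as advertised in the remark preceding the statement.
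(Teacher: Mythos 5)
Your proof is correct and takes essentially the same approach as the paper: both arguments consider $S = T\cup S_F$ and use the observation that redundancy restricts to subsets (equivalently, non-redundancy lifts to supersets) to contradict the maximality of $S_F$. The paper phrases this as a proof by contradiction — a redundant $\alpha\in S$ with witness $\beta$ gives $N_F(\alpha)\cap S_F\subseteq N_F(\beta)\cap S_F$ and $N_F(\alpha)\cap T\subseteq N_F(\beta)\cap T$, contradicting whichever of $S_F$ or $T$ contains $\alpha$ — whereas you argue directly via the contrapositive (your ``monotonicity'' observation) that $T\cup S_F$ has no redundant vertex; the two formulations are logically identical.
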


\begin{proof}
Suppose not and let $S:=S_F\cup T$. By maximality of $S_F$, there must exist $\alpha\in S$ and $\beta\in V(F)\setminus\{\alpha\}$ such that
\[N_F(\alpha)\cap S \subseteq N_F(\beta)\cap S.\]
In particular, this implies that both 
\[N_F(\alpha)\cap S_F \subseteq N_F(\beta)\cap S_F\]
and 
\[N_F(\alpha)\cap T \subseteq N_F(\beta)\cap T.\]
This either contradicts the choice of $S_F$ or the choice of $T$, depending on whether $\alpha\in S_F$ or $\alpha\in T$. Thus, the lemma is proved.
\end{proof}

As a corollary of Lemma~\ref{SFunique}, we get that any frozen homomorphism from a graph $G$ to a connected graph $F$ must map into the set $S_F$. 

\begin{cor}
\label{frozenGoesToS}
Let $F$ be a connected graph. Then for every frozen homomorphism $f$ from a graph $G$ to $F$, we have $f(V(G))\subseteq S_F$. 
\end{cor}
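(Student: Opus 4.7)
The plan is to combine the frozenness hypothesis with Lemma~\ref{SFunique}. Setting $T:=f(V(G))$, it suffices to show that no vertex of $T$ is redundant for $T$, since then $T\subseteq S_F$ by Lemma~\ref{SFunique}.

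To prove this, I would argue contrapositively: assume that some $\alpha\in T$ is redundant for $T$, witnessed by $\beta\in V(F)\setminus\{\alpha\}$ with $N_F(\alpha)\cap T\subseteq N_F(\beta)\cap T$, and construct a homomorphism $g\neq f$ that is adjacent to $f$ in $\bHom(G,F)$, contradicting frozenness. The natural construction is to pick any $v\in V(G)$ with $f(v)=\alpha$ (which exists because $\alpha\in T$), and set $g(v):=\beta$ and $g(u):=f(u)$ for every $u\neq v$. Then $g(v)=\beta\neq\alpha=f(v)$, so if $g$ is a valid neighbour of $f$ in $\bHom(G,F)$ then $v$ is not frozen by $f$.

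The key verification is that $g\in\Hom(G,F)$ and that $f(x)g(y)\in E(F)$ for every $xy\in E(G)$, i.e.\ the adjacency condition for $\bHom(G,F)$. This reduces to a short case analysis on whether $v\in\{x,y\}$. For edges $xy$ avoiding $v$, both conditions are inherited from $f$ being a homomorphism. For an edge $xy$ with (say) $x=v$ and $y\neq v$, the fact that $f$ is a homomorphism gives $\alpha f(y)=f(x)f(y)\in E(F)$ and $f(y)\in T$, so $f(y)\in N_F(\alpha)\cap T$; the redundancy inclusion then places $f(y)\in N_F(\beta)\cap T$, yielding both $g(x)g(y)=\beta f(y)\in E(F)$ and $f(x)g(y)=\alpha f(y)\in E(F)$. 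The case $y=v$, $x\neq v$ is symmetric.

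The main obstacle I anticipate is the reflexive case $x=y=v$ (i.e.\ $v$ bears a loop in $G$): here one would need $g(v)g(v)=\beta\beta\in E(F)$, but redundancy only supplies $\alpha\beta\in E(F)$. Since \Hfrz{H} is studied for loopless $G$, the problematic edge $vv$ does not exist and the construction goes through unchanged; in the fully general setting one would refine the construction (for instance, by selecting $v$ to be non-reflexive whenever possible, or by working component-wise on the subgraph of $G$ preimaging $\{\alpha\}$).
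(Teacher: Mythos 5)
Your proof takes essentially the same route as the paper: both invoke Lemma~\ref{SFunique} in contrapositive form to extract a vertex $\alpha\in f(V(G))$ that is redundant for $f(V(G))$, then recolour a single preimage $u$ of $\alpha$ to the witnessing $\beta$. The paper simply asserts that the resulting map $g$ is a homomorphism differing from $f$ only on $u$ and declares this to contradict frozenness; your explicit check that $g$ is a homomorphism \emph{and} that $f,g$ are adjacent in $\bHom(G,F)$ is the right expansion of that terse step, since adjacency, not mere equality off of one vertex, is what frozenness actually forbids.

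Your concern about loops is well founded, and it is equally a gap in the paper's own proof: if the recoloured vertex $u$ is reflexive, redundancy only supplies $\alpha\beta\in E(F)$ (via $\alpha\in N_F(\alpha)\cap f(V(G))\subseteq N_F(\beta)$), not $\beta\beta\in E(F)$, so $g$ need not be a homomorphism. Neither of your proposed patches closes this in general: every preimage of $\alpha$ may be reflexive, and restricting to a component of $f^{-1}(\alpha)$ does not change the local obstruction at $u$. In fact the statement, read literally, fails once $G$ may carry loops. Take $F$ on vertex set $\{a,b\}$ with a loop at $a$, the edge $ab$, and no loop at $b$; one checks that $b$ and then $a$ are successively redundant, so $S_F=\emptyset$, yet the unique homomorphism from a single reflexive vertex to $F$ (sending it to $a$) is frozen. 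Since the paper allows loops throughout, this is a defect of the paper rather than of your proposal; under a loopless hypothesis on $G$ your argument is complete and matches the intended proof.
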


\begin{proof}
By Lemma~\ref{SFunique}, if $f(V(G))\nsubseteq S_F$, then there exists a vertex $\alpha\in f(V(G))$ which is redundant for $f(V(G))$. Let $u$ be a vertex with $f(u)=\alpha$ and let $\beta\in V(F)\setminus\{\alpha\}$ such that $N_F(\alpha)\cap f(V(G))\subseteq N_F(\beta)\cap f(V(G))$. Then the function $g:V(G)\to V(F)$ defined by 
\[g(v):=\left\{\begin{array}{ll}\beta & \text{if }v=u,\\
f(v) &\text{otherwise}.\end{array}\right.\]
is a homomorphism from $G$ to $F$ which differs from $f$ only on $u$, contradicting the assumption that $f$ is frozen.  
\end{proof}

Next, we obtain a characterisation of thermal graphs.

\begin{cor}
\label{thermalEmpty}
A connected graph $F$ is thermal if and only if $S_F=\emptyset$. 
\end{cor}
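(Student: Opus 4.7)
The plan is to prove the two directions separately, using Corollary~\ref{frozenGoesToS} for one direction and an explicit construction for the other.

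The forward direction, $S_F = \emptyset \Rightarrow F$ thermal, should be essentially immediate from Corollary~\ref{frozenGoesToS}. Indeed, every frozen $F$-colouring $f$ of every graph $G$ satisfies $f(V(G)) \subseteq S_F = \emptyset$, which forces $V(G) = \emptyset$. Hence no graph with nonempty vertex set admits a frozen $F$-colouring, so $F$ is thermal by definition.

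For the converse, $S_F \neq \emptyset \Rightarrow F$ not thermal, I would exhibit a concrete nonempty graph with a frozen $F$-colouring. The natural candidate is $G := F[S_F]$, the subgraph of $F$ induced on $S_F$ (retaining any loops at vertices of $S_F$ that are reflexive in $F$), together with the inclusion map $f : V(G) \hookrightarrow V(F)$. Clearly $V(G) = S_F \neq \emptyset$ and $f$ is a homomorphism since $G$ is an induced subgraph of $F$. To show $f$ is frozen, I would suppose for contradiction that some $f' \in \Hom(G,F)$ with $f' \neq f$ is adjacent to $f$ in $\bHom(G,F)$, and pick a vertex $u \in V(G)$ with $\beta := f'(u) \neq u =: \alpha$. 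Applying the adjacency condition $f'(u)f(v) \in E(F)$ to each edge $uv \in E(G)$, and using that $f$ is the inclusion, yields $\beta v \in E(F)$ for every $v \in N_G(u)$. Since $N_G(u) = N_F(\alpha) \cap S_F$, this gives $N_F(\alpha) \cap S_F \subseteq N_F(\beta) \cap S_F$. As $\beta \neq \alpha$, this makes $\alpha$ redundant for $S_F$, contradicting the defining property of $S_F$. Hence $f$ is frozen and $F$ is not thermal.

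The only subtlety worth flagging is the treatment of loops in the contradiction argument. If $\alpha$ is reflexive in $F$, then $u$ is reflexive in $G$, so $uu \in E(G)$; the adjacency condition applied to this self-loop automatically contributes $\alpha \in N_F(\beta)$, which is exactly the membership required at the entry $\alpha \in N_F(\alpha) \cap S_F$ of the containment. Thus the containment $N_F(\alpha) \cap S_F \subseteq N_F(\beta) \cap S_F$ captures both the reflexive and irreflexive cases uniformly, with no separate case analysis, and the proof goes through cleanly.
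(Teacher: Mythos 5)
Your proof is correct and follows essentially the same route as the paper: the forward direction is read off directly from Corollary~\ref{frozenGoesToS}, and the converse is proved by exhibiting the inclusion map $F[S_F] \hookrightarrow F$ as a frozen colouring and deriving that any nontrivial neighbour in $\bHom(F[S_F],F)$ would force some $\alpha \in S_F$ to be redundant. Your added remark about the loop case is a minor elaboration the paper leaves implicit, but the underlying argument is identical.
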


\begin{proof}
If $F$ is not thermal, then there exists a graph $G$ with $V(G)\neq\emptyset$ such that there is a frozen homomorphism $f$ from $G$ to $F$. By Corollary~\ref{frozenGoesToS}, we have $f(V(G))\subseteq S_F$, which implies that $S_F\neq\emptyset$. 

Now, suppose that $S_F\neq\emptyset$ and let $F'$ be the subgraph of $F$ induced by $S_F$. Let $f:V(F')\to V(F)$ be defined by $f(\alpha)=\alpha$ for all $\alpha\in V(F')$. If $f$ is not frozen, then there exists a vertex $\alpha\in S_F$ which can change its colour from $\alpha$ to some $\beta\neq\alpha$. However, in order for this to be possible, we need $N_F(\alpha)\cap S_F\subseteq N_F(\beta)\cap S_F$, which implies that $\alpha$ is redundant for $S_F$. This contradiction completes the proof. 
\end{proof}

In other words, Corollary~\ref{thermalEmpty} says that a connected graph $F$ is thermal if and only if, for every subset $S$ of  $V(F)$, there exists a vertex $\alpha\in S$ and a vertex $\beta \in V(F)\setminus\{\alpha\}$ such that $N_F(\alpha)\cap S\subseteq N_F(\beta)\cap S$. This is reminiscent of (but different from) the notion of  ``dismantlability'' which appears in the study of pursuit games on graphs (see, e.g., Nowakowski and Winkler~\cite{Winkler}). A connected graph $F$ is said to be \emph{dismantlable} if, for every subset $S$ of $V(F)$, there exists distinct $\alpha,\beta\in S$ such that $\alpha\beta\in E(F)$ and $N_F(\alpha)\cap S\subseteq N_F(\beta)\cap S$.  

We remark that, given Lemma~\ref{SFunique} and Corollary~\ref{thermalEmpty}, it is easy to decide whether a connected graph $F$ is thermal in polynomial time. Simply begin by initialising $S\leftarrow V(F)$ and, while there exists a vertex $\alpha\in S$ which is redundant for $S$, set $S\leftarrow S\setminus\{\alpha\}$. The graph $F$ is thermal if and only if the set $S$ eventually becomes empty. The running time of this algorithm is clearly  polynomial in $|V(F)|$.

\begin{defn}
Given a connected graph $F$ and a vertex $\alpha\in V(F)$, we say that a set $D\subseteq S_F$ is \emph{distinguishing} for $\alpha$ if $\alpha$ is the unique vertex of $V(F)$ such that $D\subseteq N_F(\alpha)$.
\end{defn}

Building on the Corollary~\ref{frozenGoesToS}, we prove the following. 

\begin{lem}
\label{distinguishingLem}
Let $F$ be a connected graph. Then a homomorphism from a graph $G$ to $F$ is frozen if and only if  for all $v\in V(G)$ the set $f(N_G(v))$ is distinguishing for $f(v)$.
\end{lem}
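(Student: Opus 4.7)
The plan is to verify both directions by a short local argument about recolouring a single vertex, using only the definition of adjacency in $\bHom(G,F)$. Note that because $f$ is itself a homomorphism, $f(N_G(v)) \subseteq N_F(f(v))$ for every $v \in V(G)$, so $f(v)$ is already \emph{a} vertex $\alpha$ with $f(N_G(v)) \subseteq N_F(\alpha)$; the distinguishing condition is precisely that $f(v)$ is the \emph{unique} such vertex, and if $f$ is frozen then $f(N_G(v)) \subseteq S_F$ by Corollary~\ref{frozenGoesToS}, so the definition of ``distinguishing'' does apply.

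For the forward direction, suppose $f$ is frozen and fix $v \in V(G)$. If some $\beta \in V(F) \setminus \{f(v)\}$ satisfies $f(N_G(v)) \subseteq N_F(\beta)$, define $g \colon V(G) \to V(F)$ by $g(v) = \beta$ and $g(u) = f(u)$ for $u \neq v$. A short case check confirms that $g$ is a homomorphism (edges of $G$ disjoint from $v$ are handled by $f$ itself, while for each $vw \in E(G)$ we have $g(v)g(w) = \beta f(w) \in E(F)$ because $f(w) \in f(N_G(v)) \subseteq N_F(\beta)$) and that $g$ is adjacent to $f$ in $\bHom(G,F)$ by the same observation applied to each incidence of $v$. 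Since $g \neq f$, this contradicts the assumption that $f$ is frozen, so $f(N_G(v))$ must be distinguishing for $f(v)$.

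For the converse, suppose $f(N_G(v))$ is distinguishing for $f(v)$ for every $v \in V(G)$, and assume for contradiction that $f$ is not frozen. Then $f$ has some neighbour $g \neq f$ in $\bHom(G,F)$, so we may pick $u \in V(G)$ with $g(u) \neq f(u)$. Adjacency of $f$ and $g$ in $\bHom(G,F)$ yields $g(u)f(w) \in E(F)$ for every $w \in N_G(u)$, and hence $f(N_G(u)) \subseteq N_F(g(u))$. The distinguishing hypothesis then forces $g(u) = f(u)$, a contradiction.

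I do not anticipate any serious obstacle: both directions amount to the observation that edges in $\bHom(G,F)$ incident to maps differing in a single vertex $v$ correspond exactly to the neighbourhood-containment condition built into ``distinguishing'', so the two statements match up tautologically once one unpacks the definitions. The only thing to be careful about is the single-vertex case check in the forward direction, to confirm that the modification $g$ is indeed a homomorphism and is indeed adjacent to $f$ (which is where the hypothesis $f(N_G(v)) \subseteq N_F(\beta)$ is used in full).
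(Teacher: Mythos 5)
Your proof is correct and follows the paper's argument essentially verbatim: the forward direction builds a neighbour $g$ of $f$ in $\bHom(G,F)$ by recolouring a single vertex once $f(N_G(v))$ fails to be distinguishing (after invoking Corollary~\ref{frozenGoesToS} to place $f(N_G(v))$ inside $S_F$ so that ``distinguishing'' is even applicable), and the converse extracts $f(N_G(u))\subseteq N_F(g(u))$ from the adjacency of $f$ with any neighbour $g\neq f$ in $\bHom(G,F)$ and then invokes uniqueness.

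One shared caveat is worth flagging. Your case check ``$g(v)g(w)=\beta f(w)$ for each $vw\in E(G)$'' silently assumes $w\neq v$; if $v$ were reflexive, the modified map $g$ would in addition need $\beta\beta\in E(F)$, which does not follow from $f(N_G(v))\subseteq N_F(\beta)$. Concretely, take $F$ to be a $5$-cycle with a loop at one vertex $a$ (one checks $S_F=V(F)$, so $F$ is non-thermal) and $G$ a single looped vertex mapped to $a$: then $f$ is frozen, yet $f(N_G(v))=\{a\}$ is not distinguishing for $a$. The paper's own proof glosses over exactly the same point, and the forward implication is only ever applied at the loop-free vertices $w_r$ in the hardness reduction, so this is a caveat you share with the source rather than a gap you introduced.
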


\begin{proof}
Suppose first that $f$ is frozen. By Corollary~\ref{frozenGoesToS}, we know that $f(N_G(v))\subseteq S_F$ for all $v\in V(G)$. Clearly, $f(N_G(v))\subseteq N_F(f(v))$ since $f$ is a homomorphism. If there exists some $\beta\neq f(v)$ such that $f(N_G(v))\subseteq N_F(\beta)$, then we can change the colour of $v$ from $f(v)$ to $\beta$, contradicting the fact that $f$ is frozen. So, $f(N_G(v))$ is distinguishing for $f(v)$. 

Now, suppose that $f$ is a homomorphism such that $f(N_G(v))$ is distinguishing for $f(v)$ for every $v\in V(G)$. If $f$ were not frozen, then we could change the colour of some $v\in V(G)$ from $f(v)$ to some $\beta\neq f(v)$. However, both $f(v)$ and $\beta$ would have to be adjacent to every colour in $f(N_G(v))$, contradicting the fact that this set is distinguishing for $f(v)$. The result follows. 
\end{proof}

As the next lemma demonstrates, one trivial example of a distinguishing set is a set of the form $N_F(\alpha)\cap S_F$ where $\alpha\in S_F$. 

\begin{lem}
\label{nbhdDistinguishes}
Let $F$ be a connected graph and let $\alpha\in S_F$. Then the set $N_F(\alpha)\cap S_F$ is distinguishing for $\alpha$.
\end{lem}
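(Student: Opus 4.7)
The plan is to prove the statement by a direct contradiction using the non-redundancy property that defines $S_F$. Note that $\alpha$ itself trivially satisfies $N_F(\alpha) \cap S_F \subseteq N_F(\alpha)$, so the entire content of the lemma is the \emph{uniqueness} of $\alpha$ as a vertex whose neighbourhood contains $N_F(\alpha) \cap S_F$.

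To establish uniqueness, I would suppose for contradiction that there exists some $\beta \in V(F) \setminus \{\alpha\}$ with $N_F(\alpha) \cap S_F \subseteq N_F(\beta)$. The key observation is that since the left-hand side is already a subset of $S_F$, we may intersect the right-hand side with $S_F$ for free, yielding
\[
N_F(\alpha) \cap S_F \;\subseteq\; N_F(\beta) \cap S_F.
\]
But this says precisely that $\alpha$ is redundant for $S_F$ in the sense of the definition preceding $S_F$, contradicting the non-redundancy property that every element of $S_F$ must satisfy by the definition of $S_F$. Hence no such $\beta$ exists, and $\alpha$ is the unique vertex whose neighbourhood contains $N_F(\alpha) \cap S_F$, so this set is distinguishing for $\alpha$.

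There is no real obstacle here: the lemma is essentially an unpacking of the definition of $S_F$ combined with the trivial set-theoretic fact that $A \subseteq B$ and $A \subseteq S$ together imply $A \subseteq B \cap S$. The only mild subtlety worth flagging in the write-up is that the hypothesis in the definition of ``distinguishing'' quantifies over all $\beta \in V(F)$ (not just $\beta \in S_F$), which is why we need the intersection trick above to feed the hypothesis into the non-redundancy clause. Thus the proof will fit in a few lines.
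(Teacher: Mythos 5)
Your proof is correct and follows exactly the same contradiction argument as the paper: assume a $\beta\neq\alpha$ with $N_F(\alpha)\cap S_F\subseteq N_F(\beta)$, intersect both sides with $S_F$, and conclude $\alpha$ is redundant for $S_F$, contradicting $\alpha\in S_F$. The only difference is cosmetic — you spell out the intersection step and the trivial containment for $\alpha$ itself a bit more explicitly than the paper does.
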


\begin{proof}
If not, then there must be a vertex $\beta\in V(F)$ such that $N_F(\alpha)\cap S_F\subseteq N_F(\beta)$ which implies that $N_F(\alpha)\cap S_F\subseteq N_F(\beta)\cap S_F$ and so $\alpha$ is redundant for $S_F$. This is a contradiction.
\end{proof}

The following lemma provides an important distinction between $K_1$ and $K_2$ and other non-thermal graphs. 

\begin{lem}
\label{notK1K2}
If $F$ is a connected non-thermal graph with at least three vertices and $D\subseteq S_F$ is distinguishing for some $\alpha\in V(F)$, then $|D|\geq2$. 
\end{lem}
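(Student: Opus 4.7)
The plan is to assume for contradiction that $|D|\leq 1$ and derive a contradiction in each case. The argument relies on two inputs: the hypothesis that $F$ is connected with $|V(F)|\geq 3$, and the fact that every vertex of $S_F$ is non-redundant for $S_F$ (by definition of $S_F$).

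If $D=\emptyset$, then the condition ``$\alpha$ is the unique vertex with $D\subseteq N_F(\alpha)$'' degenerates to ``$\alpha$ is the unique vertex of $F$'', because every vertex $v$ trivially satisfies $\emptyset\subseteq N_F(v)$. This forces $|V(F)|=1$, contradicting the assumption $|V(F)|\geq 3$.

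Otherwise $D=\{\beta\}$ for some $\beta\in S_F$, and the distinguishing condition translates to: $\alpha$ is the unique neighbour of $\beta$ in $F$. I would split into two subcases. If $\alpha=\beta$, then $\beta$ is a reflexive vertex whose only neighbour is itself; by connectedness of $F$ this forces $V(F)=\{\beta\}$, again contradicting $|V(F)|\geq 3$. If $\alpha\neq\beta$, then $\beta$ is a pendant with unique neighbour $\alpha$, so $N_F(\beta)=\{\alpha\}$. I would now use the non-redundancy of $\beta$ in $S_F$ to obtain a contradiction, splitting on whether $\alpha\in S_F$ or not. If $\alpha\notin S_F$, then $N_F(\beta)\cap S_F=\emptyset$, so any vertex $\gamma\in V(F)\setminus\{\beta\}$ (which exists since $|V(F)|\geq 3$) trivially satisfies $N_F(\beta)\cap S_F\subseteq N_F(\gamma)\cap S_F$, forcing $\beta$ to be redundant for $S_F$. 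If $\alpha\in S_F$, then $N_F(\beta)\cap S_F=\{\alpha\}$; by connectedness and $|V(F)|\geq 3$, the vertex $\alpha$ must have a neighbour $\gamma\neq\beta$, and then $\{\alpha\}\subseteq N_F(\gamma)\cap S_F$, again making $\beta$ redundant for $S_F$. Either conclusion contradicts $\beta\in S_F$.

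The argument is essentially a case check, so I do not foresee a serious obstacle; the only subtlety is remembering that $\alpha$ itself need not belong to $S_F$ (only $D$ does), which is why the final subcase split on membership of $\alpha$ in $S_F$ is needed.
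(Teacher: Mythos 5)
Your proof is correct and takes essentially the same approach as the paper: rule out $D=\emptyset$ trivially, observe that $D=\{\delta\}$ forces $\alpha$ to be the unique neighbour of $\delta$, use connectedness and $|V(F)|\geq3$ to produce a second neighbour of $\alpha$, and conclude that $\delta$ is redundant for $S_F$. The only cosmetic difference is that you explicitly split on $\alpha=\delta$ and on $\alpha\in S_F$, whereas the paper handles the former implicitly (it is ruled out by connectedness) and the latter by noting that $N_F(\delta)\cap S_F$ is $\emptyset$ or $\{\alpha\}$ in a single sentence.
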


\begin{proof}
Clearly, since $F$ has more than one vertex, the empty set is not distinguishing for any $\alpha\in V(F)$. So, suppose that there is a set of cardinality one, say $\{\delta\}$ where $\delta \in S_F$, which is distinguishing for some $\alpha\in V(F)$. This means that $\alpha$ is the unique neighbour of $\delta$. 

Since $F$ is connected and has at least three vertices, we see that $\alpha$ must have a neighbour $\beta\notin\{\delta,\alpha\}$. We have $N_F(\delta)\cap S_F$ is either equal to $\emptyset$ or $\{\alpha\}$ depending on whether or not $\alpha\in S_F$. In either case, $N_F(\delta)\cap S_F$ is contained in $N_F(\beta)\cap S_F$. Thus, $\delta$ is redundant for $S_F$ which is a contradiction. 
\end{proof}

\subsection{General Dichotomy Theorem for \texorpdfstring{\Hfrz{\boldsymbol{H}}}{Frozen H-Colouring}}

Using the terminology of the previous subsection, we can now state the general dichotomy theorem for \Hfrz{H}. First, a definition.

\begin{defn}
Given a graph $H$, the \emph{freezer} of $H$ is the subgraph of $H$ induced by the union of the sets $S_F$ over all components $F$ of $H$. 
\end{defn}

To be clear, when we say that a graph is bipartite, we mean that it is loop-free and contains no odd cycle. 

\begin{thm}
\label{frozenThmGeneral}
\Hfrz{H} is solvable in polynomial time if either
\begin{itemize}
\item every component of $H$ is either thermal or contains at most two vertices, 
\item $H$ contains a component isomorphic to $K_2$ and the freezer of $H$ is bipartite, or
\item $H$ contains a component consisting of a single reflexive vertex.
\end{itemize}
Otherwise, \Hfrz{H} is NP-complete. 
\end{thm}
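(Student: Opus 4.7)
The plan is to split the proof into three tasks: placing \Hfrz{H} in NP, verifying the polynomial-time algorithms in each of the three good cases, and proving NP-hardness in all remaining cases. Membership in NP is immediate from Lemma~\ref{distinguishingLem}: given an $H$-colouring $f$ of $G$, frozenness is verified in polynomial time by checking, at each $v\in V(G)$, that no $\beta\in V(H)\setminus\{f(v)\}$ satisfies $f(N_G(v))\subseteq N_H(\beta)$.

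For the polynomial cases I would argue component-wise, using the fact that a frozen homomorphism $G\to H$ amounts to a choice, for each connected component $C$ of $G$, of a connected component $F$ of $H$ and a frozen homomorphism $C\to F$. If $H$ has a reflexive $K_1$ component $\{\alpha\}$, then the constant map $C\to\{\alpha\}$ is trivially frozen for every $C$, so the answer is always ``yes''. If every component of $H$ is thermal or has at most two vertices, Corollary~\ref{thermalEmpty} yields $S_F=\emptyset$ for each thermal component, so by Corollary~\ref{frozenGoesToS} any frozen colouring must map each component of $G$ into a component of $H$ of size at most two; feasibility for each component of $G$ then reduces to a polynomial-time bipartiteness/isolation check against the available $K_1$, reflexive $K_1$, and $K_2$ components of $H$. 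If $H$ contains a $K_2$ component and the freezer is bipartite, the same corollary forces every frozen colouring to give a proper $2$-colouring on its image, so the problem again reduces to a bipartiteness test, with any bipartite component of $G$ realised via the $K_2$.

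The main work, and the principal obstacle, is NP-hardness when none of the three polynomial conditions applies. In that case $H$ has a non-thermal component $F$ with $|V(F)|\geq 3$, and Lemmas~\ref{nbhdDistinguishes} and~\ref{notK1K2} provide a vertex $\alpha\in S_F$ together with a distinguishing set $D:=N_F(\alpha)\cap S_F$ of size at least two. The strategy is to encode a suitable NP-complete CSP into \Hfrz{H}: for each CSP constraint, attach to the participating variable-vertices a gadget that pins an auxiliary vertex to colour $\alpha$ (in the spirit of the freezing gadgets of Section~\ref{wheelsSec}), whereupon Lemma~\ref{distinguishingLem} forces the colours of those variables to form a distinguishing set for $\alpha$. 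The crux is showing that the CSP whose relation consists of ``tuples that are distinguishing for $\alpha$'' is itself NP-complete, and I would do this via the algebraic approach to CSP dichotomy by exhibiting the absence of any Siggers (equivalently, weak near-unanimity) polymorphism of this relational structure; the size-$\geq 2$ conclusion of Lemma~\ref{notK1K2} together with the uniqueness clause in the definition of ``distinguishing'' make the relation behave like a non-trivial hypergraph-colouring constraint, which is incompatible with symmetric polymorphisms. Additional care is needed to rule out frozen colourings that escape into other components of $H$, and this is precisely where the hypotheses excluding reflexive $K_1$ components and $K_2$ components with bipartite freezer are invoked to block those alternative targets.
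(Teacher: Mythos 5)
Your high-level plan matches the paper's: NP membership via Lemma~\ref{distinguishingLem}, a component-wise case analysis for the polynomial cases, and an algebraic CSP-hardness argument for the rest. The polynomial cases are handled essentially as in the paper, and the NP-membership observation is correct. But the hardness half has real gaps.

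First, you fix a single vertex $\alpha\in S_F$ and take the CSP relation to be ``tuples distinguishing for $\alpha$.'' The paper instead lets the relation consist of all tuples $(r_1,\dots,r_k)$ whose underlying set is distinguishing for \emph{some} vertex of \emph{some} non-trivial component of $H'$ (with $k$ taken large enough to accommodate the largest such set, but at least $3$, via repetition). This matters: in the reduction, the constraint vertex $w_r$ is not pinned to $\alpha$ at all — its colour is whatever unique vertex $\alpha_r$ happens to satisfy $\{g(r_1),\dots,g(r_k)\}\subseteq N_H(\alpha_r)$, and different constraints can land on different $\alpha_r$'s in different components. You would need a gadget that forces a vertex to colour $\alpha$ in every frozen colouring, and no such gadget is constructed (the wheel gadgets of Section~\ref{wheelsSec} are specific to $W_{2k+1}$ and do not transfer). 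Without this pinning, your single-$\alpha$ relation does not describe the frozen colourings of the constructed graph.

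Second, and more fundamentally, the key gadget of the paper is attached to the \emph{variable} vertices, not the constraint vertices, and you omit it entirely. To go from a CSP solution $f$ to a frozen $H$-colouring, one must make the variable vertices themselves frozen; the paper does this by attaching a copy of $\tilde J=J^{|V(J)|}$ (where $J$ is the freezer of $H'$) to each variable $v$, identified at a coordinate-distinct vertex, and colouring $\tilde J_v$ by the projection onto the coordinate of colour $f(v)$. Local surjectivity of projections (Lemma~\ref{prodLem}(\ref{iso})) then gives every vertex of $\tilde J_v$ a full distinguishing neighbourhood. Conversely, non-bipartiteness of the components of $\tilde J$ (Lemma~\ref{prodLem}(\ref{nonbipProd})) is exactly what blocks escape into bipartite-freezer components when $H$ has a $K_2$ component. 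Your proposal gestures at ``ruling out escape into other components'' but does not provide the mechanism.

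Third, the heart of the hardness argument — that a totally symmetric $k$-relation ($k\geq 3$) with no constant tuple has an NP-complete CSP — is not actually proved. You appeal to ``absence of any Siggers polymorphism'' and an analogy to hypergraph colouring, but the paper's Lemma~\ref{symmetricLem} requires a genuine argument: one takes a tuple with the minimum number $m\geq 2$ of distinct entries and applies the Siggers identity $\varphi(a,r,e,a)=\varphi(r,a,r,e)$ to four carefully chosen tuples to produce a tuple with at most $m-1$ distinct entries, a contradiction. Two different tuple patterns are needed depending on whether $k>m$ or $k=m$, and neither is an immediate consequence of the definition. This step cannot be dispatched by analogy.

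In short, the overall scaffolding is right, but the reduction is underdetermined in the two places where the paper does the real work: the definition of the CSP relation (all distinguishing sets, not those for a fixed $\alpha$) together with the variable-side product gadget, and the concrete polymorphism computation behind Lemma~\ref{symmetricLem}.
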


Clearly, Theorem~\ref{frozenThmConn} is equivalent to  Theorem~\ref{frozenThmGeneral} in the case that $H$ is connected. The polynomial side of Theorem~\ref{frozenThmGeneral} is easy, as we show now.

\begin{proof}[Proof of the polynomial side of Theorem~\ref{frozenThmGeneral}]
Let $G$ be any graph such that $V(G)\neq \emptyset$. Clearly, no frozen $H$-colouring of $G$ can map any vertex of $G$ into a thermal component of $H$. Therefore, $G$ admits a frozen $H$-colouring if and only if $G$ admits a frozen $H'$-colouring where $H'$ is obtained from $H$ by deleting all thermal components. So, from here forward, we can assume that $H$ has no thermal components. 

Suppose that $H$ contains an isolated vertex. If this isolated vertex is the only vertex of $H$, then $G$ admits a frozen $H$-colouring if and only if $G$ has no edges, and so the \Hfrz{H} problem is clearly solvable in polynomial time. If $H$ contains an isolated vertex and has more than one vertex, then $G$ admits a frozen $H$-colouring if and only if $G$ admits a frozen $H'$-colouring where $H'$ is obtained from $H$ by deleting all isolated vertices. Also, if $H$ has a component which is just a single reflexive vertex $x$, then $G$ admits a frozen $H$-colouring if and only if either $x$ is the only vertex of $H$ or $G$ has no isolated vertices. So, from here forward, we can assume that $H$ has no components $F$ with $|V(F)|=1$.

Thus, we can assume that every component of $H$ has at least two vertices and that $H$ has no thermal components. By hypothesis, this implies that $H$ contains a component isomorphic to $K_2$ and that the freezer of $H$ is bipartite. We claim that $G$ admits a frozen $H$-colouring if and only if $G$ is bipartite and contains no isolated vertices (which are all conditions which can clearly be checked in polynomial time). First, if $G$ is bipartite and contains no isolated vertices, then any homomorphism from $G$ to $K_2$ is frozen. On the other hand, if $G$ admits a frozen $H$-colouring, then $G$ cannot have an isolated vertex (as $H$ has at least two vertices). Also, by applying Corollary~\ref{frozenGoesToS} to every component of $H$, we see that the image of any frozen homomorphism from $G$ to $H$ must be contained in the vertex set of the freezer of $H$. Therefore, since the freezer of $H$ is  bipartite, $G$ must be bipartite too. This completes the proof. 
\end{proof}

\subsection{Relational Structures and Polymorphisms}

Before moving on to the proof of Theorem~\ref{frozenThmGeneral}, we 
pause to introduce a tool from the theory of constraint satisfaction
problems which is useful in showing that problems are NP-complete.

\begin{defn}
For an integer $k \geq 1$, a \emph{\kgraph~}$\mathcal{H}$
consists of a finite set $V = V(\mathcal{H})$ and a $k$-ary 
relation $R(\mathcal{H}) \subseteq V^k$. 
\end{defn}

A \kgraph~ is an example of a relational structure.  Most of
the following defintions are simplified versions of more general
definitions that are well known for relational structures.  

\begin{defn}
Given two \kgraphs~ $\mathcal{G}$ and $\mathcal{H}$, 
a \emph{homomorphism} from $\mathcal{G}$ to $\mathcal{H}$ is a function $f:V(\mathcal{G})\to V(\mathcal{H})$ such that 
 \[(r_1,\dots,r_k)\in R(\mathcal{G})\Rightarrow (f(r_1),\dots,f(r_k))\in R({\mathcal{H}}).\] 
\end{defn}

Given a \kgraph~ $\mathcal{H}$, the following decision problem
is known as the \emph{constraint satisfaction problem} for $\mathcal{H}$ and is denoted \CSP{\mathcal{H}}: 
\begin{itemize}
\item[] \textbf{Instance:} A \kgraph~ $\mathcal{G}$. 
\item[] \textbf{Question:} Does there exist a homomorphism from
  $\mathcal{G}$ to $\mathcal{H}$?
\end{itemize}
The famous dichotomy conjecture for constraint satisfaction problems of Feder and Vardi~\cite{CSP} says that, for every relational structure $\mathcal{H}$ (a well known
generalisation of a \kgraph), the problem \CSP{\mathcal{H}} is either solvable in polynomial time or is NP-complete. This problem has been open for nearly 20 years and motivated the discovery of numerous important connections between  theoretical computer science,  combinatorics and universal algebra. Very recently, several groups have announced proofs of the dichotomy conjecture for CSPs~\cite{CSPproof,Bulatov2017,Zhuk2017,Delic2017} (one of which has now been withdrawn; see~\cite{Willard2017}). One of the key discoveries in the study of the complexity of constraint satisfaction problems is that the complexity of \CSP{\mathcal{H}} is intimately tied to the types of polymorphisms (which we define next) that $\mathcal{H}$ possesses. 

\begin{defn}
Let $\mathcal{H}$ be a \kgraph. 
An \emph{$m$-ary polymorphism} of $\mathcal{H}$ is a function $\varphi:V(\mathcal{H})^m\to V(\mathcal{H})$ which preserves $R(\mathcal{H})$; that is, such that  for any set of $m$ $k$-tuples 
\[(r_{1,1},\dots,r_{1,k}),(r_{2,1},\dots,r_{2,k})\dots,(r_{m,1},\dots,r_{m,k})\in R\]
we have 
\[\left(\varphi(r_{1,1},\dots,r_{m,1}),\varphi(r_{1,2},\dots,r_{m,2}),\dots,\varphi(r_{1,k},\dots,r_{m,k})\right)\in R\] 
\end{defn}

In the proof of Theorem~\ref{frozenThmGeneral} in the next subsection, we will apply a consequence of the following theorem. It has been observed be several people, (see Barto and Stanovsk\'y~\cite{Barto}) that this theorem, or actually a more general version stated for relational structures, follows from Barto, Kosik, and Niven~\cite{BKN} by the proof of a similar statement by Siggers in~\cite{Siggers10}. 

\begin{thm}[\cite{Barto,Siggers10}]
\label{markThm}
If $\mathcal{H}$ is a \kgraph~ which does not admit a $4$-ary polymorphism $\varphi$ such that
\[\varphi(a,r,e,a)=\varphi(r,a,r,e)\]
for all $a,r,e\in V(\mathcal{H})$, then $CSP(\mathcal{H})$ is NP-complete.

\end{thm}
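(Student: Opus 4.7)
My approach would be to invoke the algebraic theory of constraint satisfaction. The starting point is the well-known principle (the hardness half of the Bulatov--Jeavons--Krokhin algebraic approach) that if a relational structure $\mathcal{H}$ admits no \emph{Taylor polymorphism} then $\text{CSP}(\mathcal{H})$ is NP-complete: in the absence of a Taylor polymorphism, $\mathcal{H}$ primitively-positively interprets some NP-hard Boolean CSP, e.g.\ one-in-three $\text{SAT}$. Recall that a Taylor polymorphism is an $n$-ary polymorphism $t$ for which, in every coordinate $i$, there is an identity $t(u_1,\ldots,u_n)=t(v_1,\ldots,v_n)$ with each $u_j,v_j\in\{x,y\}$ and $u_i=x$, $v_i=y$. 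Granting this hardness tool, the theorem reduces to the following algebraic statement, which is the technical heart of \cite{Siggers10,BKN}: if $\mathcal{H}$ admits any Taylor polymorphism, then it admits a $4$-ary polymorphism $\varphi$ satisfying $\varphi(a,r,e,a)=\varphi(r,a,r,e)$ for all $a,r,e\in V(\mathcal{H})$.

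To establish this implication, I would first, following Siggers \cite{Siggers10}, produce a $6$-ary polymorphism $s$ satisfying the weaker identity $s(x,y,x,z,y,z)=s(y,x,z,x,z,y)$. The existence of such an $s$ is obtained by passing to the idempotent reduct of $\text{Pol}(\mathcal{H})$ and invoking tame congruence theory — specifically, the fact that a Taylor algebra omits tame-congruence type $\mathbf{1}$ — to rule out pathological behaviour in the term algebra generated by a single Taylor operation. Then, following Barto, Kozik and Niven \cite{BKN}, I would compress from $6$ arguments down to $4$ by substituting copies of $s$ into itself in a carefully chosen pattern and iterating. Concretely, one forms a sequence $s_0=s,s_1,s_2,\ldots$ inside the polymorphism clone in which $s_{i+1}$ arises from $s_i$ by a specific substitution that strengthens the identity satisfied; since $V(\mathcal{H})$ supports only finitely many $4$-ary operations, a pigeonhole argument forces the sequence to stabilise, and one then argues that the stable term yields the required $\varphi$.

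The step I expect to be the main obstacle, and on which I would spend the most effort, is the compression from the $6$-ary Siggers identity down to the $4$-ary one. Straightforward variable identifications in $s$ do not produce the desired identity, and one must reason indirectly by exploiting the absorption structure of the clone $\text{Pol}(\mathcal{H})$ — the same structure theory that underpins the Barto--Kozik programme for bounded-width algebras. Preservation of $R(\mathcal{H})$ by $\varphi$ is automatic once $\varphi$ is built by composition from polymorphisms, so the entire technical burden is to verify that the Siggers identity $\varphi(a,r,e,a)=\varphi(r,a,r,e)$ holds pointwise for every triple $(a,r,e)\in V(\mathcal{H})^3$. Once $\varphi$ is constructed, the NP-completeness conclusion follows immediately by the contrapositive of the algebraic hardness principle recalled above.
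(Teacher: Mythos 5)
The paper does not actually prove Theorem~\ref{markThm}; it imports it by citation, with the remark that (as observed by Barto and Stanovsk\'y) the statement follows from Barto--Kozik--Niven by the argument Siggers used in~\cite{Siggers10}. So there is no paper proof for your proposal to be compared against, and the right assessment is of your sketch on its own merits.

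Your high-level framing is correct: invoke the Bulatov--Jeavons--Krokhin hardness criterion (no Taylor polymorphism implies NP-hardness) and then show that any finite Taylor algebra has a $4$-ary polymorphism satisfying $\varphi(a,r,e,a)=\varphi(r,a,r,e)$. However, the core step has a genuine gap. Your proposed compression from a $6$-ary Siggers operation to the $4$-ary one --- form a chain $s_0,s_1,\ldots$ by repeated self-substitution, invoke pigeonhole on the finite clone, and claim the stabilised term works --- is not an argument. You neither specify the substitution pattern nor explain why a fixed point of such a chain would satisfy the asymmetric identity $\varphi(a,r,e,a)=\varphi(r,a,r,e)$; plain variable identifications in $s$ do not do it (as you yourself note), and nothing in your sketch replaces them. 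This is not how the $4$-ary Siggers term is obtained in the literature: Kearnes, Markovi\'c and McKenzie (and, as the paper's remark suggests, a rerun of Siggers' own argument) apply the Barto--Kozik--Niven loop lemma directly to a $4$-variable digraph construction rather than compressing a $6$-ary term. Your attribution of the $6$-to-$4$ reduction to \cite{BKN} is therefore also off: \cite{BKN} supplies the smooth-digraph loop lemma that both the $6$-ary and the $4$-ary derivations rely on, not an arity-reduction device. The appeals to tame congruence theory and to absorption are gestures at relevant machinery rather than steps of a proof, so the technical heart of the theorem remains unestablished in your proposal.
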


Say that a \kgraph~ $\mathcal{H}$ is  \emph{totally symmetric} if whenever $(r_1,\dots,r_k)\in R(\mathcal{H})$ and $(r_1',\dots,r_k')$ is a $k$-tuple such that $\{r_1',\dots,r_k'\}=\{r_1,\dots,r_k\}$ we have $(r_1',\dots,r_k')\in R(\mathcal{H})$. A \emph{constant tuple} is an element of $V^k$ for some $k$ of the form $(v,\dots,v)$.  We use Theorem~\ref{markThm} to prove the following lemma which will be applied in the proof of Theorem~\ref{frozenThmGeneral}. This lemma is of independent interest; indeed, we recently found out that it has been proved independently, with quite a different proof, by Ham and Jackson \cite{HJ}. 

\begin{lem}
\label{symmetricLem}
If $\mathcal{H}$ is a non-empty totally symmetric \kgraph, for $k \geq 3$,  containing no  constant tuple, then \CSP{\mathcal{H}} is NP-complete.
\end{lem}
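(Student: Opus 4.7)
The plan is to apply Theorem~\ref{markThm}: it suffices to show that $\mathcal{H}$ admits no $4$-ary polymorphism $\varphi$ satisfying the Siggers identity $\varphi(a,r,e,a) = \varphi(r,a,r,e)$. Suppose for contradiction that such a $\varphi$ exists. Let $\mathcal{F}$ denote the family of entry sets of tuples in $R(\mathcal{H})$; by total symmetry $\mathcal{F}$ determines $R(\mathcal{H})$, and since $\mathcal{H}$ has no constant tuple, every $S \in \mathcal{F}$ satisfies $|S| \geq 2$. I will prove by strong induction on $m$ that if $\mathcal{F}$ contains a set of size $m \geq 2$, then $\varphi$ can be used to produce a constant tuple in $R(\mathcal{H})$; applying this to any $S \in \mathcal{F}$ (which exists because $R(\mathcal{H}) \neq \emptyset$) contradicts the no-constant-tuple hypothesis.

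For the base case $m = 2$, say $\{u,v\} \in \mathcal{F}$, the key observation is that the Siggers identity with $(a,r,e) = (u,v,u)$ and with $(a,r,e) = (v,u,u)$ together imply that the three $4$-tuples
\[
C_1 = (u,v,u,u), \qquad C_2 = (v,u,v,u), \qquad C_3 = (v,u,u,v)
\]
all have the same image under $\varphi$. I will choose positive integers $a, b, c \geq 1$ with $a+b+c = k$ (possible since $k \geq 3$) and form a $4 \times k$ matrix whose columns consist of $a$ copies of $C_1$, $b$ copies of $C_2$, and $c$ copies of $C_3$. A routine check confirms that each row has entry set exactly $\{u,v\}$, so all four rows belong to $R(\mathcal{H})$, and applying $\varphi$ column-by-column yields a constant tuple in $R(\mathcal{H})$.

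For the inductive step, let $S = \{s_1, \ldots, s_m\} \in \mathcal{F}$ with $m \geq 3$; note $k \geq m$ because a $k$-tuple with entry set of size $m$ must have length at least $m$. If $k \geq m+1$, I will enlarge the base-case matrix (built with $u = s_1, v = s_2$ and $a+b+c = k-m+2 \geq 3$) by appending $m-2$ constant columns equal to $(s_j, s_j, s_j, s_j)$ for $j = 3, \ldots, m$; each row then has entry set $\{s_1, s_2\} \cup \{s_3, \ldots, s_m\} = S$, and the coordinatewise $\varphi$-image has at most $m-1$ distinct entries (one value from the $C_i$-columns and at most $m-2$ from the padding columns). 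If instead $k = m$, I will use the four permutations
\[
t_1 = (s_1, s_2, s_3, s_4, \ldots, s_k), \quad t_2 = (s_2, s_1, s_3, s_4, \ldots, s_k),
\]
\[
t_3 = (s_3, s_2, s_1, s_4, \ldots, s_k), \quad t_4 = (s_1, s_3, s_2, s_4, \ldots, s_k),
\]
chosen so that columns $1$ and $2$ form a Siggers pair under $(a,r,e) = (s_1, s_2, s_3)$ while columns $4, \ldots, k$ are constant; again the coordinatewise $\varphi$-image has at most $m-1$ distinct entries. In either subcase the output lies in $R(\mathcal{H})$ with entry set $T$ of size at most $m-1$: if $|T| = 1$ this is itself a constant tuple and we are done, and otherwise $T \in \mathcal{F}$ with $2 \leq |T| < m$, so the inductive hypothesis delivers a constant tuple.

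The main obstacle is the subcase $k = m$, since then every row must be a permutation of $S$ and the padding trick used when $k \geq m+1$ is unavailable. The permutations displayed above are carefully engineered so that a single instance of the Siggers identity collapses the first two columns while leaving the remaining columns constant -- just enough to bound the output entry set by $m-1$ and feed the induction. Once this construction is verified, Theorem~\ref{markThm} yields the desired NP-completeness.
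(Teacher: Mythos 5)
Your proof is correct and takes essentially the same approach as the paper: apply Theorem~\ref{markThm}, assume a Siggers polymorphism $\varphi$ exists, and build a $4\times k$ array of tuples from $R(\mathcal{H})$ whose columns are engineered so that one or two applications of the Siggers identity collapse the distinct entries of the coordinatewise $\varphi$-image, splitting into the cases $k>m$ (pad with constant columns) and $k=m$ (use permutations of the same entry set). The only cosmetic difference is that the paper phrases the descent as a minimality argument (``let $m$ be the least number of distinct symbols in any tuple, derive a smaller one, contradiction'') whereas you package the identical reduction as a strong induction with an explicit $m=2$ base case; the $4\times k$ matrices you choose differ slightly from the paper's but play the same role.
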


\begin{proof}
Let $m$ be the minimum of $|\{r_1,\dots, r_k\}|$ over all $(r_1,\dots,r_k)\in R(\mathcal{H})$; i.e. $m$ is the smallest number of distinct symbols which appear in an element of the relation $R(\mathcal{H})$. Up to relabelling the elements of $V(\mathcal{H})$, we can assume that there is $(r_1,\dots,r_k)\in R(\mathcal{H})$ such that $\{r_1,\dots,r_k\}=\{1,\dots,m\}$.  Since $R(\mathcal{H})$ contains no constant tuple, we know that $m\geq2$. 

Now, suppose to the contrary that $\mathcal{H}$ is not NP-complete. Then, by Theorem~\ref{markThm}, there exists a $4$-ary polymorphism on $\mathcal{H}$ such that
\begin{equation}\label{area}\varphi(a,r,e,a)=\varphi(r,a,r,e)\end{equation}
for all $a,r,e\in V(\mathcal{H})$. If $k>m$, then, since $R(\mathcal{H})$ is totally symmetric, we have that $R(\mathcal{H})$ contains the following four $k$-tuples (written as column vectors for convenience when applying $\varphi$):
\[\begin{array}{cccc}\begin{bmatrix}
           2 \\
           1 \\
           2 \\
           3 \\
           4\\
           \vdots \\
           m\\
           \vdots \\
           m
         \end{bmatrix}  ,       
&         
         \begin{bmatrix}
           1 \\
           2 \\
           1 \\
           3 \\
           4\\
           \vdots \\
           m\\
           \vdots \\
           m
         \end{bmatrix}  ,       
&         
         \begin{bmatrix}
           1 \\
           1 \\
           2 \\
           3 \\
           4\\
           \vdots \\
           m\\
           \vdots \\
           m
         \end{bmatrix}  ,       
&         
         \begin{bmatrix}
           2 \\
           1 \\
           1 \\
           3 \\
           4\\
           \vdots \\
           m\\
           \vdots \\
           m
         \end{bmatrix} .
         \end{array}
         \]
Applying (\ref{area}) with $a=2$, $r=e=1$ we see that the image of the sequence of entries in the first row under $\varphi$ is the same as as the image of the sequence of entries in the second row under $\varphi$, i.e., $\varphi(2,1,1,2)=\varphi(1,2,1,1)$. Also, letting $a=e=1$ and $r=2$ shows that $\varphi(1,2,1,1)=\varphi(2,1,2,1)$. Since $\varphi$ is a polymorphism, we have that the $k$-tuple
\[(\varphi(2,1,1,2),\varphi(1,2,1,1), \varphi(2,1,2,1),\varphi(3,3,3,3),\dots, \varphi(m,m,m,m),\dots, \varphi(m,m,m,m))\]
is in $R(\mathcal{H})$. However, as we saw above, the first three entries of this tuple are the same and, clearly, so are the last $k-m$ entries. Therefore, this is a $k$-tuple in $R(\mathcal{H})$ with at most $m-1$ distinct entries, contradicting our choice of $m$. 

The argument in the case $m=k$ is similar except that, this time, we consider the following four $k$-tuples:
\[\begin{array}{cccc}\begin{bmatrix}
           1 \\
           3 \\
           2 \\
           4 \\
           5\\
           \vdots \\
           m
         \end{bmatrix}  ,       
&         
         \begin{bmatrix}
           3 \\
           1 \\
           2 \\
           4\\
           5\\
           \vdots \\
           m
         \end{bmatrix}  ,       
&         
         \begin{bmatrix}
           2 \\
           3 \\
           1 \\
           4\\
           5\\
           \vdots \\
           m
         \end{bmatrix}  ,       
&         
         \begin{bmatrix}
           1 \\
           2 \\
           3 \\
           4\\
           5\\
           \vdots \\
           m
         \end{bmatrix} .
         \end{array}
         \]
Note that all four of these tuples are in $R(\mathcal{H})$ because of total symmetry and the fact that $k\geq3$. Applying $\varphi$ to the sequence of entries in the first row yields the same result as applying $\varphi$ to the sequence of entries in the second row by (\ref{area}). Thus, since $\varphi$ is a polymorphism, we again get that $R(\mathcal{H})$ contains an element with at most $m-1$ distinct entries, contradicting our choice of $m$. This completes the proof. 
\end{proof}

We observe now that Lemma \ref{symmetricLem} implies that $H$-colouring is NP-complete for any
non-bipartite graph $H$. As this is the hard part of the the $H$-colouring dichotomy of
\cite{Hcol}, this further attests to the possible independent interest of the lemma. (We must point
out, however, that the proof of Lemma \ref{symmetricLem} relies on results that are stronger than
the $H$-colouring dichotomy.)

Indeed, given a non-bipartite graph $H$ with odd-girth $g \geq 3$, let $d$ be  
smallest odd integer with $g < 3d$. One can show that where $C_{3d}$ is the cycle of girth $3d$
with vertices $x,y$ and $z$ mutually distance $d$ apart, the set
\[ \mathcal{H} = \{ (\phi(x), \phi(y), \phi(z) ) \mid \phi: C_{3d} \to H \} \]
is a non-empty totally symmetric $3$-graph on $V(H)$, containing no constant. So by
Lemma~\ref{symmetricLem}
$\CSP{\mathcal{H}}$ is NP-complete.  For an instance $\mathcal{G}$ of $\CSP{\mathcal{H}}$ one can
construct an instance $G$ of $H$-colouring by replacing any tuple $e = (x_e,y_e,z_e)$ with a copy
of $C_{3d}$ where all vertices are new except for the copies of $x,y$ and $z$, which we take to
be $x_e,y_e$ and $z_e$ respectively.  It is not hard to see that
\[ \mathcal{G} \to \mathcal{H} \iff G \to H, \]
and so that $H$-colouring is NP-complete.

\subsection{The Reduction}

We turn our attention now to the proof of Theorem~\ref{frozenThmGeneral}. We remark that the main construction used in the reduction is partly inspired by a construction used by Fiala and Paulusma~\cite{Fiala} to prove a dichotomy for the problem of deciding whether a graph $G$ admits an $H$-colouring $f$ such that $f\left(N_G(v)\right) = N_H\left(f(v)\right)$ for all $v\in V(G)$. However, rather than using a reduction from the $2$-colouring problem for hypergraphs as was done in~\cite{Fiala}, we will use a reduction from $\CSP{\mathcal{H}}$ for some relational structure $\mathcal{H}$. The following definitions will be useful in the proof.

\begin{defn}
\label{catProdDef}
Given graphs $J_1,\dots,J_t$, let $J_1\times\dots\times J_t$ be the graph with vertex set $V(J_1)\times \dots \times V(J_t)$ where $(u_1,\dots,u_t)$ is adjacent to $(v_1,\dots,v_t)$ if and only if $u_i$ is adjacent to $v_i$ in $J_i$ for $1\leq i\leq t$. We call $\times$ the \emph{categorical product}. 
\end{defn}

\begin{defn}
 Given a graph $J$ and an integer $t$, we let $J^{ t}$ denote the $t$-fold categorical product of $J$ with itself; i.e. $J^{t}:=\underbrace{J\times\dots \times J}_{t}$.
\end{defn}

\begin{defn}
Given a graph $G$ and an integer $t\geq1$ and $1\leq i\leq t$, the function $\varphi_i:V(J^t)\to V(J)$ which maps each vertex to its $i$th coordinate is called the  \emph{projection map} of $J^t$ onto the $i$th coordinate.
\end{defn}

In proving Theorem~\ref{frozenThmGeneral}, we will make use of three simple facts about the categorical product and its projection maps given by the following lemma.

\begin{lem}
\label{prodLem}
Given a graph $J$ and an integer $t\geq1$, the following statements are true: 
\begin{enumerate}[(a)]
\item\label{proj} For $1\leq i\leq t$, the projection map $\varphi_i$ is a homomorphism from $J^t$ to $J$. 
\item\label{iso} If $J$ contains no isolated vertices, then for every vertex $u\in J^t$ and every neighbour $\gamma$ of $\varphi_i(u)$ in $J$ there exists a neighbour $v$ of $u$ in $J^t$ with $\varphi_i(v)=\gamma$. 
\item\label{nonbipProd} If every component of $J$ is non-bipartite, then every component of $J^t$ is non-bipartite. 
\end{enumerate}
\end{lem}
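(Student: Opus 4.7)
The plan is to treat the three claims in order, with (a) and (b) being essentially immediate from Definition~\ref{catProdDef} and (c) requiring a short walk-padding argument.

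For part (\ref{proj}), I would simply note that, by definition, if $u = (u_1,\ldots,u_t)$ and $v = (v_1,\ldots,v_t)$ are adjacent in $J^t$, then $u_jv_j \in E(J)$ for every $j$, so in particular $\varphi_i(u)\varphi_i(v) = u_iv_i \in E(J)$ and $\varphi_i$ is a homomorphism. For part (\ref{iso}), given $u$ and $\gamma \in N_J(\varphi_i(u))$, I would build the required neighbour $v$ of $u$ in $J^t$ with $\varphi_i(v) = \gamma$ by setting $v_i := \gamma$ and, for each $j \neq i$, choosing $v_j$ to be any element of $N_J(u_j)$. This choice is available precisely because $J$ has no isolated vertices, and $uv \in E(J^t)$ then follows directly from Definition~\ref{catProdDef}.

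The substantive claim is (\ref{nonbipProd}). Given any $u = (u_1,\ldots,u_t) \in V(J^t)$, my goal is to exhibit an odd closed walk at $u$ in $J^t$, which suffices because any odd closed walk in a graph forces a loop or an odd cycle in the same component. Each $u_j$ lies in a non-bipartite component of $J$, and hence admits an odd closed walk $W_j$ at $u_j$ in $J$ of some odd length $\ell_j$. The key step is to arrange for these walks to have a \emph{common} odd length $L$; once this is done, concatenating them coordinate-wise yields a sequence $v_0,v_1,\ldots,v_L \in V(J^t)$ with $v_0 = v_L = u$ and $v_sv_{s+1} \in E(J^t)$ for each $s$ (by part (\ref{proj}) applied edge-by-edge), which is the required odd closed walk at $u$.

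The main care point is the length-alignment, and this will be the only real obstacle. I would argue that each $u_j$ admits an odd closed walk at itself of every sufficiently large odd length. First, $u_j$ cannot be isolated in $J$: if it were, its component would consist of a single loopless vertex and hence be bipartite, contradicting the hypothesis. Thus either $u_j$ carries a loop, in which case $W_j$ can be padded by any amount by traversing the loop, or $u_j$ has some non-loop neighbour $x_j$, in which case traversing the edge $u_jx_j$ back and forth pads $W_j$ by $2$. Since all $\ell_j$ are odd, choosing any odd $L \geq \max_j \ell_j$ and padding each $W_j$ up to length $L$ yields the desired common length and completes the proof.
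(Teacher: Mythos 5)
Your proofs of (a) and (b) are essentially identical to the paper's. For (c), however, you take a genuinely different (and more careful) route. The paper's proof of (c) is extremely terse: it asserts that ``the contrapositive of (c) is now immediate by (a),'' and then observes that an odd cycle in $J^t$ projects to an odd cycle in $J$. On close reading, that observation proves the implication ``$J^t$ contains an odd cycle $\Rightarrow J$ contains an odd cycle,'' which is the \emph{converse} of what the lemma requires, not its contrapositive; the paper's argument as written does not actually establish that every component of $J^t$ inherits non-bipartiteness. Your walk-padding argument — taking an odd closed walk based at each coordinate $u_j$, padding all of them to a common odd length $L$ (using a loop at $u_j$ to extend by one step, or a back-and-forth traversal of an edge to extend by two), and then combining them coordinate-wise into an odd closed walk at $u$ in $J^t$ — is a correct and self-contained proof of the needed forward implication, and it properly handles the reflexive vertices that the paper permits. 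The only nit is cosmetic: the final adjacency check $v_sv_{s+1}\in E(J^t)$ follows directly from Definition~\ref{catProdDef} (each coordinate pair is an edge of $J$), not from part (a), which goes in the opposite direction; you should cite the definition of $\times$ there rather than (a).
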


\begin{proof}
If $uv\in E(J^t)$, then, in particular, the $i$th coordinate of $u$ is adjacent to the $i$th coordinate of $v$ in $J$ and so $\varphi_i(u)\varphi_i(v)\in E(J)$. Thus, $\varphi_i$ is a homomorphism and (\ref{proj}) is proved.

Next, we prove (\ref{iso}). Let $u=(\alpha_1,\dots,\alpha_t)$ be a vertex of $V(J^t)$. Since none of $\alpha_1,\dots,\alpha_t$ are isolated in $J$, we get that $u$ is adjacent to $v$ where $v = (\gamma_1,\dots,\gamma_t)$ and $\gamma_j$ is a neighbour of $\alpha_j$ for $1\leq j\leq t$. Since $\gamma_i$ can be chosen to be an arbitrary neighbour of $\alpha_i$, we see that (\ref{iso}) is proved. 

The contrapositive of (\ref{nonbipProd}) is now immediate by \eqref{proj}.
Indeed, if $J^t$ contains an odd cycle, they it must map to an odd cycle in $J$ via any of the
projections.
\end{proof}

Finally, we present the proof of Theorem~\ref{frozenThmGeneral}.

\begin{proof}[Proof of Theorem~\ref{frozenThmGeneral}]
The polynomial side was already proved earlier in this section. So, we prove only the NP-complete side. Let $H$ be a graph containing no component consisting of a single reflexive vertex such that either 
\begin{itemize}
\item $H$ has a component isomorphic to $K_2$ and the freezer of $H$ is non-bipartite, or
\item $H$ does not have a component isomorphic to $K_2$ and $H$ has at least one non-thermal component with at least three vertices. 
\end{itemize}
Our goal is to show that \Hfrz{H} is NP-complete. Let $H'$ be a subgraph of $H$ such that
\begin{itemize}
\item If $H$ has a component isomorphic to $K_2$, then $H'$ is obtained from $H$ by deleting all thermal components and all components $F$ such that $F[S_F]$ is bipartite.
\item If $H$ does not have a component isomorphic to $K_2$, then $H'$ is obtained from $H$ by deleting all isolated vertices and all thermal components. 
\end{itemize}

Note that every component of $H'$ has at least three vertices. Let $\mathcal{D}$ be the collection of all $D\subseteq V(H')$ such that $D$ is distinguishing for some vertex $\alpha$ in some component $F$ of $H'$. We construct a \kgraph~ $\mathcal{H}$ with vertex set $V(\mathcal{H})=V(H')$ as follows:
\begin{itemize}
\item Define $k:=\max\left(\{3\}\cup \{|D|: D\in\mathcal{D}\}\right)$. 
\item Let $R(\mathcal{H})$ consist of all of the $k$-tuples $(r_1,\dots,r_k)$ such that $\{r_1,\dots,r_k\}\in \mathcal{D}$. 
\end{itemize}
Note that $k \geq 3$ and $\mathcal{H}$ is totally symmetric by construction; also $\mathcal{H}$ contains no constant tuple by Lemma~\ref{notK1K2} and the fact that every component of $H'$ has at least three vertices. Therefore, by Lemma~\ref{symmetricLem}, we have that \CSP{\mathcal{H}} is NP-complete. Our goal is to reduce \CSP{\mathcal{H}} to \Hfrz{H}. 

To this end, let $\mathcal{G}$ be any instance of \CSP{\mathcal{H}}. For each $r=(r_1,\dots,r_k)\in R(\mathcal{G})$, we create a vertex $w_r$ corresponding to $r$ and let $W:=\left\{w_r: r\in R(\mathcal{G})\right\}$. We first construct the incidence graph of $\mathcal{G}$: a bipartite graph $B$ with bipartition $\left(V(\mathcal{G}),W\right)$ where a vertex $v\in V(\mathcal{G})$ is adjacent to $w_r$ with $r=(r_1,\dots,r_k)$ if $v\in \{r_1,\dots,r_k\}$. 

Let $J$ be the freezer of $H'$, let $\tilde{J}:=J^{|V(J)|}$ and let $\tilde{z}$ be a vertex of $\tilde{J}$ with all of its coordinates distinct. We construct a graph $G$ from $B$ by adding a copy $\tilde{J}_v$ of $\tilde{J}$ for each $v\in V(\mathcal{G})$ and identifying $v$ with the copy $\tilde{z}_v$ of $\tilde{z}$ in $\tilde{J}_v$. Note that $|V(G)|=O\left(|V(\mathcal{G})| + \left|R(\mathcal{G})\right|\right)$ and that $G$ can be constructed from $\mathcal{G}$ in polynomial time. 

We claim that $\mathcal{G}$ admits a homomorphism to $\mathcal{H}$ if and only if $G$ has a frozen $H$-colouring. First, we suppose that $\mathcal{G}$ admits a homomorphism $f$ to $\mathcal{H}$ and construct a frozen $H$-colouring $g$ of $G$. First, let $g(v)=f(v)$ for all $v\in V(\mathcal{G})$. Next, for every $r = (r_1,\dots, r_k)\in R(\mathcal{G})$, we have that $\{f(r_1),\dots,f(r_k)\}\in \mathcal{D}$. So, we can let $\alpha_r$ be the unique vertex of $H$ such that $\{f(r_1),\dots,f(r_k)\}\subseteq N_H(\alpha_r)$ and let $g(w_r)=\alpha_r$. Now, for each $v\in V(\mathcal{G})$, we colour the vertices of $\tilde{J}_v$ with the projection map onto the coordinate of $\tilde{z}_v$ corresponding to the colour $g(v)$ (note that this is clearly consistent with the colour already chosen for $v$). We note here that by Lemma~\ref{prodLem}(\ref{iso}) the projection map is locally surjective.  Using Lemmas~\ref{distinguishingLem} and~\ref{nbhdDistinguishes}, together with the fact that $J$ is the freezer of $H'$, shows this projection is frozen.

We claim that the neighbourhood of every vertex of $G$ is mapped to a set in $\mathcal{D}$ which will imply that $g$ is frozen by Lemma~\ref{distinguishingLem}. As we have already seen, the neighbourhood of each $w_r\in W$ maps (by $f$ and so by $g$) onto a set of $\mathcal{D}$. Given $u\in V(\tilde{J}_v)$ for some $v\in V(\mathcal{G})$ (including the case $u=v$), let $F$ be the component of $H'$ containing $g(u)$. By Corollary~\ref{frozenGoesToS}, we have that $g(u)\in S_F$. By definition, the graph $J$ contains the graph $F[S_F]$ as a component. So, applying Lemma~\ref{prodLem} (\ref{iso}) and the fact that $J$ has no isolated vertices (by definition of $H'$), we see that, for every $\beta\in N_F(g(u))\cap S_F$, there is a neighbour of $u$ of  colour $\beta$. Thus,  $g(N_G(u))\in \mathcal{D}$ by Lemma~\ref{nbhdDistinguishes}. Therefore, $g$ is frozen. 

Now, for the other direction, suppose that there exists a frozen $H$-colouring $g$ of $G$. We claim that the restriction of $g$ to $V(\mathcal{G})$ is a homomorphism from $\mathcal{G}$ to $\mathcal{H}$. Let $r=(r_1,\dots,r_k)\in R(\mathcal{G})$ be arbitrary; our goal is to show that $g(\{r_1,\dots,r_k\})\in \mathcal{D}$. Let $F$ be the component of $H$ containing $g(w_r)$. Then $F$ must be non-thermal and, by Lemma~\ref{distinguishingLem}, we must have that $g(\{r_1,\dots,r_k\})$ is distinguishing for $g(w_r)$. If $F$ is a component of $H'$, then we have $g(\{r_1,\dots,r_k\})\in\mathcal{D}$ and we are done. So, we assume that $F$ is a non-thermal component of $H$ which is not a component of $H'$. Clearly $F$ cannot have only one vertex as $H$ does not contain a component consisting of a single reflexive vertex and $w_r$ is not an isolated vertex of $G$. Therefore, the only possibility is that $H$ contains a component isomorphic to $K_2$ and $F[S_F]$ is bipartite. For an arbitrary neighbour $v$ of $w_r$, we have that the component of $\tilde{J}_v$ containing $\tilde{z}_v$ is mapped by $g$ into $S_F$ by Corollary~\ref{frozenGoesToS}. However this is impossible since  $F[S_F]$ is bipartite and no component of $\tilde{J}$ is bipartite. Indeed, by construction of $H'$ (recall that we are in the case that $H$ contains a component isomorphic to $K_2$), we have that every component of $J$ is non-bipartite  and so every component of $\tilde{J}$  is non-bipartite by Lemma~\ref{prodLem} (\ref{nonbipProd}). This completes the proof.
\end{proof}

\section{Concluding Remarks}
\label{conclusion}

The complexity of the \Hrec{H} problems are still wide open for general $H$. For \Hrec{H}, it is very tempting to conjecture that the problem is always either PSPACE-complete or solvable in polynomial time (indeed, there is no known example which would refute such a conjecture). 

Related to the \Hrec{H} problem of finding paths between maps in $\bHom(G,H)$, is the problem of
determing the diameter of components of $\bHom(G,H)$.
Given a fixed graph $H$ if, for every graph $G$,  every component of $\bHom(G,H)$ has diameter bounded by a polynomial in $|V(G)|$ (where the polynomial depends only on $H$), then the \Hrec{H} problem is in NP as every ``yes'' instance has a polynomial sized certificate. This raises the following natural question. 

\begin{ques}
\label{diamQ}
For which graphs $H$ does there exist an integer $k=k(H)$ such that, for every graph $G$, every component of $\bHom(G,H)$ has diameter at most $|V(G)|^k$?
\end{ques}

It seems plausible that, if \Hrec{H} is in NP (in particular, it if is in P), then the components of $\bHom(G,H)$ ``should'' have polynomial diameter, since a reconfiguration sequence is the most natural certificate. Therefore, answering Question~\ref{diamQ} could be an important step towards clarifying the complexity of the \Hrec{H} problem. We should mention that Bonsma and Cereceda~\cite{Bonsma} proved that, for $k\geq4$, there exists graphs $G$ such that $\bHom(G,K_k)$ contains a component of superpolynomial diameter. We should also mention that the complexity of the problem of finding the shortest path between two vertices of $\bHom(G,H)$ has been considered (in the case that $H$ is a clique) by Johnson, Kratsch, Kratsch, Patel and Paulusma~\cite{shortest}. 


\begin{ack}
The authors would like to thank Sean McGuinness for participating in many enjoyable discussions on topics related to those covered in this paper. 
\end{ack}

\bibliographystyle{plain}
\bibliography{reconfig}

\begin{thebibliography}{10}

\bibitem{Alon}
N.~Alon and T.~H. Marshall.
\newblock Homomorphisms of edge-colored graphs and {C}oxeter groups.
\newblock {\em J. Algebraic Combin.}, 8(1):5--13, 1998.

\bibitem{BKN}
L.~Barto, M.~Kozik, and T.~Niven.
\newblock The {CSP} dichotomy holds for digraphs with no sources and no sinks
  (a positive answer to a conjecture of {B}ang-{J}ensen and {H}ell).
\newblock {\em SIAM J. Comput.}, 38(5):1782--1802, 2009.

\bibitem{Barto}
L.~Barto and D.~Stanovsk\'y.
\newblock Polymorphisms of small digraphs.
\newblock {\em Novi Sad J. Math.}, 40(2):95--109, 2010.

\bibitem{Bonsma}
P.~Bonsma and L.~Cereceda.
\newblock Finding paths between graph colourings: {PSPACE}-completeness and
  superpolynomial distances.
\newblock {\em Theoret. Comput. Sci.}, 410(50):5215--5226, 2009.

\bibitem{BrewRel}
R.~Brewster.
\newblock The complexity of colouring symmetric relational systems.
\newblock {\em Discrete Appl. Math.}, 49(1-3):95--105, 1994.
\newblock Viewpoints on optimization (Grimentz, 1990; Boston, MA, 1991).

\bibitem{RickThesis}
R.~C. Brewster.
\newblock {\em Vertex colourings of edge-coloured graphs}.
\newblock ProQuest LLC, Ann Arbor, MI, 1993.
\newblock Thesis (Ph.D.)--Simon Fraser University (Canada).

\bibitem{signed17}
R.~C. Brewster, F.~Foucaud, P.~Hell, and R.~Naserasr.
\newblock The complexity of signed graph and edge-coloured graph homomorphisms.
\newblock {\em Discrete Math.}, 340(2):223--235, 2017.

\bibitem{directed}
R.~C. Brewster, J.-B. Lee, and M.~Siggers.
\newblock Recolouring reflexive digraphs.
\newblock Preprint, May 2017.

\bibitem{circularReconfig}
R.~C. Brewster, S.~McGuinness, B.~Moore, and J.~A. Noel.
\newblock A dichotomy theorem for circular colouring reconfiguration.
\newblock {\em Theoret. Comput. Sci.}, 639:1--13, 2016.

\bibitem{BrewsterNoel}
R.~C. Brewster and J.~A. Noel.
\newblock Mixing homomorphisms, recolorings, and extending circular
  precolorings.
\newblock {\em J. Graph Theory}, 80(3):173--198, 2015.

\bibitem{Bulatov2017}
A.~Bulatov.
\newblock A dichotomy theorem for nonuniform csps.
\newblock arXiv:1703.03021v2, April 2017.

\bibitem{Bulatov}
A.~A. Bulatov.
\newblock {$H$}-coloring dichotomy revisited.
\newblock {\em Theoret. Comput. Sci.}, 349(1):31--39, 2005.

\bibitem{3colReconfig}
L.~Cereceda, J.~van~den Heuvel, and M.~Johnson.
\newblock Finding paths between 3-colorings.
\newblock {\em J. Graph Theory}, 67(1):69--82, 2011.

\bibitem{Delic2017}
D.~Deli\'{c}.
\newblock Constraint satisfaction problem dichotomy for finite templates: a
  proof via consistency checks.
\newblock arXiv:1706.03451v5, September 2017.

\bibitem{CSP}
T.~Feder and M.~Y. Vardi.
\newblock The computational structure of monotone monadic {SNP} and constraint
  satisfaction: a study through {D}atalog and group theory.
\newblock {\em SIAM J. Comput.}, 28(1):57--104, 1999.

\bibitem{Fiala}
J.~Fiala and D.~Paulusma.
\newblock A complete complexity classification of the role assignment problem.
\newblock {\em Theoret. Comput. Sci.}, 349(1):67--81, 2005.

\bibitem{HJ}
L.~Ham and M.~Jackson.
\newblock Axiomatisability and hardness for universal horn classes of
  hypergraphs.
\newblock arXiv:1704.02099, April 2017.

\bibitem{Sliding}
R.~A. Hearn and E.~D. Demaine.
\newblock P{SPACE}-completeness of sliding-block puzzles and other problems
  through the nondeterministic constraint logic model of computation.
\newblock {\em Theoret. Comput. Sci.}, 343(1-2):72--96, 2005.

\bibitem{Hcol}
P.~Hell and J.~Ne\v{s}et\v{r}il.
\newblock On the complexity of {$H$}-coloring.
\newblock {\em J. Combin. Theory Ser. B}, 48(1):92--110, 1990.

\bibitem{shortest}
M.~Johnson, D.~Kratsch, S.~Kratsch, V.~Patel, and D.~Paulusma.
\newblock Finding shortest paths between graph colourings.
\newblock {\em Algorithmica}, 75(2):295--321, 2016.

\bibitem{KunSzegedy}
G.~Kun and M.~Szegedy.
\newblock A new line of attack on the dichotomy conjecture.
\newblock {\em European J. Combin.}, 52(part B):338--367, 2016.

\bibitem{Nishimura}
N.~Nishimura.
\newblock Introduction to reconfiguration.
\newblock preprints201709.0055.v1, September 2017.

\bibitem{Winkler}
R.~Nowakowski and P.~Winkler.
\newblock Vertex-to-vertex pursuit in a graph.
\newblock {\em Discrete Math.}, 43(2-3):235--239, 1983.

\bibitem{CSPproof}
A.~Rafiey, J.~Kinne, and T.~Feder.
\newblock Dichotomy for digraph homomorphism problems.
\newblock arXiv:1701.02409v2, February 2017.

\bibitem{Siggers}
M.~H. Siggers.
\newblock A new proof of the {$H$}-coloring dichotomy.
\newblock {\em SIAM J. Discrete Math.}, 23(4):2204--2210, 2009/10.

\bibitem{Siggers10}
M.~H. Siggers.
\newblock A strong {M}al'cev condition for locally finite varieties omitting
  the unary type.
\newblock {\em Algebra Universalis}, 64(1-2):15--20, 2010.

\bibitem{Tardif2005}
C.~Tardif.
\newblock Multiplicative graphs and semi-lattice endomorphisms in the category
  of graphs.
\newblock {\em J. Combin. Theory Ser. B}, 95:338--345, 2005.

\bibitem{changeSurvey}
J.~van~den Heuvel.
\newblock The complexity of change.
\newblock In {\em Surveys in combinatorics 2013}, volume 409 of {\em London
  Math. Soc. Lecture Note Ser.}, pages 127--160. Cambridge Univ. Press,
  Cambridge, 2013.

\bibitem{Willard2017}
R.~Willard.
\newblock Dichotomy for digraph homomorphism problems.
\newblock arXiv:1707.09440v1, July 2017.

\bibitem{Wrochna}
M.~Wrochna.
\newblock Homomorphism reconfiguration via homotopy.
\newblock In {\em 32nd {I}nternational {S}ymposium on {T}heoretical {A}spects
  of {C}omputer {S}cience}, volume~30 of {\em LIPIcs. Leibniz Int. Proc.
  Inform.}, pages 730--742. Schloss Dagstuhl. Leibniz-Zent. Inform., Wadern,
  2015.

\bibitem{WrochnaMult}
M.~Wrochna.
\newblock Square-free graphs are multiplicative.
\newblock {\em J. Combin. Theory Ser. B}, 122:479--507, 2017.

\bibitem{Zhuk2017}
D.~Zhuk.
\newblock The proof of csp dichotomy conjecture.
\newblock arXiv:1704.01914v6, August 2017.

\end{thebibliography}

\end{document}